\newtheorem{theorem}{Theorem}[section]
\newtheorem{definition}[theorem]{Definition}
\newtheorem{lemma}[theorem]{Lemma}
\newtheorem{remark}[theorem]{Remark}
\newtheorem{proposition}[theorem]{Proposition}
\newtheorem{corollary}{Corollary}[section]
\newtheorem{question}[theorem]{Question}
\newcommand{\CC}{\mathbb{C}}
\newcommand{\EE}{\mathbb{E}}
\newcommand{\NN}{\mathbb{N}}
\newcommand{\RR}{\mathbb{R}}
\DeclareSymbolFont{bbold}{U}{bbold}{m}{n}
\DeclareSymbolFontAlphabet{\mathbbold}{bbold}
\newcommand{\Var}{\mathrm{Var}}
\renewcommand{\emptyset}{\varnothing}
\renewcommand{\epsilon}{\varepsilon}
\renewcommand{\Tilde}{\widetilde}
\renewcommand{\hat}{\widehat}
\newcommand{\aln}{\mathrm{align}}
\newcommand{\Ex}{\mathop{\mathbb{E}}}
\newcommand{\Px}{\mathop{\mathbb{P}}}
\newcommand{\cc}{\mathrm{cc}}
\newcommand{\dbar}{\,\|\,}
\newcommand{\sQ}{\mathcal{Q}}
\newcommand{\sP}{\mathcal{P}}
\newcommand{\eye}{\bm i}
\newcommand\numberthis{\addtocounter{equation}{1}\tag{\theequation}}
\newif\ifnotes
\newcommand{\authnote}[3]{\textcolor{#3}{[{{\bf #1:} { {#2}}}]}}
\newcommand{\todo}[1]{\ifnotes \authnote{TODO}{#1}{red} \fi}
\title{Inference of rankings planted in random tournaments}
\author{Dmitriy Kunisky\thanks{Email: \texttt{dmitriy.kunisky@yale.edu}. Supported in part by ONR Award N00014-20-1-2335, a Simons Investigator Award to Daniel Spielman, and NSF grants DMS-1712730 and DMS-1719545.}}
\author{Daniel A.\ Spielman\thanks{Email: \texttt{daniel.spielman@yale.edu}. Supported in part by ONR Award N00014-20-1-2335 and a Simons Investigator Award to Daniel Spielman.}}
\author{Xifan Yu\thanks{Email: \texttt{xifan.yu@yale.edu}. Supported in part by a Simons Investigator Award to Daniel Spielman.}}
\affil{Department of Computer Science, Yale University}
\date{\today}
\begin{document}

\maketitle

\thispagestyle{empty}

\begin{abstract}
    We consider the problem of inferring an unknown ranking of $n$ items from a random tournament on $n$ vertices whose edge directions are correlated with the ranking.
    We establish, in terms of the strength of these correlations, the computational and statistical thresholds for detection (deciding whether an observed tournament is purely random or drawn correlated with a hidden ranking) and recovery (estimating the hidden ranking with small error in Spearman's footrule or Kendall's tau metric on permutations).
    Notably, we find that this problem provides a new instance of a \emph{detection-recovery gap}: solving the detection problem requires much weaker correlations than solving the recovery problem.
    In establishing these thresholds, we also identify simple algorithms for detection (thresholding a degree~2 polynomial) and recovery (outputting a ranking by the number of ``wins'' of a tournament vertex, i.e., the out-degree) that achieve optimal performance up to constants in the correlation strength.
    For detection, we find that the above low-degree polynomial algorithm is superior to a natural spectral algorithm.
    We also find that, whenever it is possible to achieve strong recovery (i.e., to estimate with vanishing error in the above metrics) of the hidden ranking, then the above ``Ranking By Wins'' algorithm not only does so, but also outputs a close approximation of the maximum likelihood estimator, a task that is NP-hard in the worst case.
\end{abstract}

\clearpage

\pagestyle{empty}

\tableofcontents

\clearpage

\setcounter{page}{1}
\pagestyle{plain}

\section{Introduction}
We study three problems in statistics and optimization concerning a model that generates noisy observations of pairwise comparisons of $n$ items.
In our model, there is a ``hidden'' permutation $\pi$ that encodes the ``true'' ranking of the $n$ items, but we are only given noisy observations of whether $\pi(i) < \pi(j)$ or vice-versa.
Concretely, for each $i, j \in [n]$ distinct, we observe $T_{i,j} = -T_{j, i} \in \{\pm 1\}$.
These labels are generated at random by sampling, for each pair of $i, j$ that satisfies $\pi(i) < \pi(j)$,
\begin{equation}
    T_{i,j} = -T_{j, i} = \begin{cases}
        +1 & \quad \text{ with probability } \frac{1}{2} + \gamma, \\
        -1 & \quad \text{ with probability } \frac{1}{2} - \gamma,
    \end{cases}
\end{equation}
for some $\gamma \in [0, \frac{1}{2}]$.
In words, when $\pi(i) < \pi(j)$ then $T_{i,j}$ is biased towards $+1$, while when $\pi(i) > \pi(j)$ then $T_{i,j}$ is biased towards $-1$.

It is instructive to consider the extreme values of $\gamma$: if $\gamma = \frac{1}{2}$, then $q$ tells us directly each pairwise comparison between $\pi(i)$ and $\pi(j)$, and therefore tells us $\pi$ itself, while if $\gamma = 0$, then we observe a uniformly random tournament that contains no information concerning $\pi$.
For $\gamma$ in between, we have partial but noisy information about the underlying ranking $\pi$, and we are interested in how informative our observations are.

From now on, we will refer to the above model as the \emph{planted ranking model}, or the \emph{planted model} for short, and $\gamma > 0$ as the \emph{signal} parameter.
We denote this model by $\sP$.
We will refer to the case $\gamma = 0$, in which case the model generates a uniformly random $\pm 1$-valued skew-symmetric matrix $T$, as the \emph{null model}, denoted $\sQ$.

In this paper, we study three fundamental problems in this setting:
\begin{enumerate}
    \item For what $\gamma$ can we distinguish whether $T$ was drawn from the planted model or from the null model?
    \item For what $\gamma$ can we approximately recover the hidden permutation $\pi$ in the planted model?
    \item For what $\gamma$ can we find an approximate maximum likelihood estimator (MLE) of $\pi$ under the planted model? As we will see in Section~\ref{sec:mle}, this will amount to finding an approximate maximizer $\hat{\pi}$ of the objective function
    \begin{equation}
    \aln(\hat{\pi}, T) \colonequals \sum_{(i,j): T_{i, j} = 1} \big(\boldsymbol{1}\{\hat{\pi}(i) < \hat{\pi}(j)\} - \boldsymbol{1}\{\hat{\pi}(i) > \hat{\pi}(j)\}\big),
    \end{equation}
    which we call the \emph{alignment objective}.
\end{enumerate}
We will refer to the problems above as the \emph{detection} task, the \emph{recovery} task, and the \emph{alignment maximization} task, respectively.
The first two questions each admit two variants: we may ask about \emph{computational} feasibility---whether these tasks can be achieved by polynomial-time algorithms---or about \emph{information-theoretic} feasibility---whether they can be achieved by any computations on $T$ at all.
For the last question, on the other hand, we are only interested in efficient algorithms, since the MLE can of course be computed inefficiently by brute force (in our case in time $O(n!)$, by brute force enumeration of all permutations).

Let us make a few remarks on our terminology for the objects $\pi$ and $T$.
We identify $\pi$ with the corresponding \emph{ranking} or total ordering of $[n]$ given by $\pi^{-1}(1) <_{\pi} \pi^{-1}(2) <_{\pi} \dots <_{\pi} \pi^{-1}(n) $, where $\pi^{-1}(i)$ is the element that is ranked $i$-th and $<_{\pi}$ denotes the induced ordering on $[n]$.
Thus, $i <_{\pi} j$ if and only if $\pi(i) < \pi(j)$, and in the planted model we have $T_{i, j}$ biased towards 1 in this case.

We also identify $T$ with the corresponding \emph{tournament}, an assignment of directions to each edge of the complete graph on $[n]$, where we include the directed edge $(i, j)$ (whose tail is $i$ and whose head is $j$) if $T_{i, j} = 1$, and include $(j, i)$ otherwise.
If we want to distinguish the tournament from this $\pm 1$-valued skew-symmetric adjacency matrix, we call the matrix the \emph{tournament matrix}.
We also identify a permutation $\pi$ with the associated tournament describing the total ordering $<_{\pi}$,
\begin{equation}
\pi(i, j) \colonequals
\begin{cases}
    +1 & \quad \text{ if } i <_{\pi} j,\\
    -1 & \quad \text{ if } i >_{\pi} j,
\end{cases}
\end{equation}
which will turn out to be convenient later on.
For instance, with this notation, we may rewrite the alignment objective more simply as
\begin{equation}
    \aln(\hat{\pi}, T) = \frac{1}{2}\sum_{i, j} T_{i, j} \hat{\pi}(i, j) = \sum_{i < j} T_{i, j} \hat{\pi}(i, j).
\end{equation}

\subsection{Main Contributions}

We are now ready to summarize our main contributions.
These will describe the feasibility of the three tasks mentioned above, pinpointing the scalings of $\gamma = \gamma(n)$ at which they become possible or impossible, for either polynomial-time or arbitrary computations (i.e., identifying computational and information-theoretic thresholds, respectively).
\begin{enumerate}
\item For the detection task, we show a transition on the order of $\gamma \sim n^{-3/4}$. When $\gamma = \omega(n^{-3/4})$, \emph{strong detection} between the planted model and the null model (i.e., identifying with high probability as $n \to \infty$ whether $T \sim \sP$ or $T \sim \sQ$) is achieved by thresholding a simple degree $2$ polynomial.
When $\gamma = O(n^{-3/4})$, it is information-theoretically impossible to achieve strong detection.
On the other hand, there is a constant $c > 0$ such that, if $\gamma \ge c \cdot n^{-3/4}$, then thresholding the aforementioned degree $2$ polynomial achieves \emph{weak detection} (i.e., identifies with probability at least $\frac{1}{2} + \delta$ for some $\delta > 0$ whether $T \sim \sP$ or $T \sim \sQ$).
Finally, when $\gamma = o(n^{-3/4})$, it is information-theoretically impossible to achieve even weak detection.
In contrast, we show that a natural spectral algorithm transitions from succeeding to failing to achieve detection on the suboptimal larger order of $\gamma \sim n^{-1/2}$.

\item For the recovery task, we show a transition on the order of $\gamma \sim n^{-1/2}$. When $\gamma = \omega(n^{-1/2})$, \emph{strong recovery} (estimating $\pi$ with vanishing error as $n \to \infty$) is achieved by a simple ``Ranking By Wins'' algorithm, and when $\gamma = \Theta(n^{-1/2})$, \emph{weak recovery} (estimating $\pi$ with error less than that achieved by a random guess) is achieved by the same algorithm, but strong recovery is information-theoretically impossible. Finally, when $\gamma = o(n^{-1/2})$, it is information-theoretically impossible to achieve even weak recovery.

\item For the alignment maximization problem, we show that the same Ranking By Wins algorithm with high probability constructs a $(1 - o(1))$-approximation to the maximum alignment on input from the planted model when $\gamma = \omega(n^{-1/2})$.
\end{enumerate}

We emphasize that the first two results above imply that this problem provides a new example of a \emph{detection-recovery gap}, a phenomenon of much recent interest in the literature on algorithmic high-dimensional statistics.
For many problems, it was previously observed that the thresholds for strong detection and weak recovery coincide; for instance, \cite{Abbe-2017-SBMReview} discusses this point concerning the stochastic block model and its variations.
More recently, natural examples of problems were found where this does not occur, for instance for the planted dense subgraph \cite{CX-2016-ThresholdsPlantedClustersGrowing,SW-2020-LowDegreeEstimation,BJ-2023-DetectionRecoveryGapPDS} and planted dense cycle \cite{MWZ-2023-DetectionRecoveryPlantedCycles} problems.
Still, this phenomenon remains mysterious, and we hope that this new example may help to clarify the origins of detection-recovery gaps.

\subsection{Related Works}

Our model of random tournaments has been studied in the past, but mostly from the perspective of particular algorithms seeking to compute an approximation to the MLE of $\pi$, i.e., to approximately solve the alignment maximization task.

In \cite{braverman2007noisy}, an efficient algorithm was proposed that that with high probability (w.h.p.) \emph{exactly} computes the MLE of the hidden permutation, for the signal scaling $\gamma = \Theta(1)$.\footnote{We use the term \emph{with high probability} for sequences of events occurring with probability converging to 1 as $n \to \infty$.}
Moreover, it is shown that the MLE is close to the hidden permutation, with a total dislocation $O(n)$ and the maximum dislocation $O(\log n)$.\footnote{The \emph{dislocation} of $i$ between permutations $\pi$ and $\rho$ is $|\pi(i) - \rho(i)|$, the total dislocation is the sum of these over all $i$---which is the same as the Spearman's footrule distance---and the maximum dislocation is the maximum of these over all $i$.} A faster $O(n^2)$-time algorithm is given in \cite{klein2011tolerant} in the same setting as \cite{braverman2007noisy}, but that algorithm does not output the exact MLE and has a worse guarantee on the total dislocation distance.
As our results show, the scaling $\gamma = \Theta(1)$ is also far greater than the thresholds for efficiently recovering or detecting a hidden ranking with other algorithms.

Improving on this scaling, \cite{rubinstein2017sorting} gave an efficient algorithm that again w.h.p~exactly computes the MLE, now for $\gamma = \Omega((\log \log n / \log n)^{1/6})$.
The sequence of works \cite{geissmann2017sorting, geissmann2018optimal} yielded an algorithm that achieves the same approximation guarantee as in \cite{braverman2007noisy} with an improved running time of $O(n \log n)$, but that again does not compute the exact MLE and operates under an even more stringent assumption that $\gamma > 7/16$ is a sufficiently large constant.

As mentioned earlier, any set of pairwise comparisons of $n$ items may be viewed as a tournament on $n$ vertices, or an assignment of directions to the edges of a complete graph.
Finding the MLE under the planted model corresponds to the well-known \emph{feedback arc set} problem on directed graphs, which is known to be NP-hard \cite{ailon2008aggregating, alon2006ranking}.
This problem, given a tournament, asks for a permutation of the vertices that maximizes the number of directed edges of the tournament that are consistent with the permutation (i.e., whose tail vertex is smaller under the permutation than the head vertex).
The literature on this problem is concerned with a different normalization of the alignment objective, however, given, for a tournament $T$, by
\begin{align*}
    \mathrm{obj}(\hat{\pi}, T) \colonequals \sum_{(i,j)\in E(T)} \boldsymbol{1}\{\hat{\pi}(i) < \hat{\pi}(j)\} = \frac{1}{2}\left(\binom{n}{2} + \aln(\hat{\pi}, T)\right).
\end{align*}
A PTAS for maximizing $\mathrm{obj}(\hat{\pi}, T)$ with running time doubly exponential in the error $\varepsilon$ is given by \cite{kenyon2007rank}.
We note that while $\mathrm{obj}(\hat{\pi}, T)$ above takes value between $0$ and $\binom{n}{2}$, the optimum is always at least $\frac{1}{2} \binom{n}{2}$.
In fact, \cite{spencer1971optimal, de1983maximum} prove that the optimum is always at least $\frac{1}{2}\binom{n}{2} + \Omega(n^{3/2})$ and such a permutation can be constructed in polynomial time \cite{poljak1988tournament}.
Therefore, a constant multiplicative error $\varepsilon > 0$ in the PTAS of \cite{kenyon2007rank} translates to an additive error of $\frac{1}{2}\binom{n}{2} \varepsilon$, which is potentially much larger than $n^{3/2}$ and might not capture the actual magnitude of deviation from the ``base value'' of $\frac{1}{2}\binom{n}{2}$.

For this reason we instead work with $\aln(\hat{\pi}, T)$ in this paper, which is equal to the difference between the numbers of consistent and inconsistent edges in the tournament $T$ with respect to a given ranking $\hat{\pi}$.
Our alignment objective function has its value between $\Omega(n^{3/2})$ and $n^2$. In particular, the algorithm in \cite{kenyon2007rank} is no longer a PTAS for this renormalized objective.
On the other hand, our results show that the Ranking By Wins algorithm is a good approximation algorithm for this objective, albeit only for a specific class of average-case inputs distributed according to the planted model with sufficiently strong signal.

Another line of research in this direction is concerned with the sample complexity of learning the hidden permutation, where the noisy comparison between one pair of items may potentially be queried multiple times, each time returning a fresh noisy label. \cite{wauthier2013efficient} gives algorithms in the \emph{noiseless} setting that, for any $\varepsilon > 0$, queries $O(n)$ pairs of comparisons and recovers a permutation at Kendall tau distance $\varepsilon \binom{n}{2}$ in expectation from the hidden permutation. In the regime when $\gamma > 0$ is a constant, one may employ an $O(n \log n)$-time sorting algorithm, and for each comparison query the noisy label $O(\log n)$ times to get the correct comparison with high probability, which gives an algorithm with $O(n \log^2 n)$ sample complexity. More sophisticated methods \cite{feige1990computing, karp2007noisy} in fact show that the hidden permutation can be found w.h.p.~with $O(n \log(n / \gamma))$ sample complexity, even if $\gamma > 0$ is not a constant. When $\gamma > 0$ is a constant, the results in \cite{wang2022noisy}, further improved by \cite{gu2023optimal}, give tight bounds on the number of noisy comparisons needed to recover the hidden permutation.

We also mention another popular model that generates noisy pairwise comparisons between $n$ elements, the Bradley-Terry-Luce (BTL) model, which was introduced in \cite{bradley1952rank, luce1959individual}. In the BTL model, there is a hidden \emph{preference vector} $w = (w_1, \dots, w_n) \in \mathbb{R}_{> 0}^n$, such that one observes a noisy label $T_{i, j}$ that takes value $+1$ with probability $w_i / (w_i + w_j)$, and $-1$ with probability $w_j / (w_i + w_j)$.
As above, such models are usually studied in terms of query complexity, with multiple independent queries of the same pair $(i, j)$ allowed.
There have been extensive studies of when one can approximate the preference vector $w$ (see e.g.~\cite{negahban2012iterative, rajkumar2014statistical, shah2018simple}) or recover the top $k$ elements (see e.g.~\cite{jang2016top, chen2015spectral, mohajer2017active}) in the BTL model.
But, the BTL model is quite different from ours, because the magnitudes of the $w_i$ can create a broader range of biases in the observations than our single parameter $\gamma$.

\section{Main Results}

\subsection{Notations}

We will use $[n]$ to denote $\{1, 2, \dots, n\}$.
For a set $A$, $\binom{A}{k}$ denotes the collection of subsets of $A$ of size $k$.
For two sets $A, B$, $A \triangle B$ denotes the symmetric difference of $A$ and $B$.

A directed graph is a pair $(V, E)$ of a vertex set $V$ and a set $E$ of directed edges, which are ordered pairs of distinct vertices.
We denote a directed edge from vertex $i$ to vertex $j$ as $(i, j)$.

We will use standard asymptotic notations $O(\cdot), o(\cdot), \Omega(\cdot), \omega(\cdot), \Theta(\cdot)$, which will always refer to the limit $n \to \infty$.
We will additionally use $\Tilde{O}$ to omit multiplicative factors depending polylogarithmically on $n$.
We use the term \emph{with high probability (w.h.p.)} to refer to sequences of events that occur with probability $1 - o(1)$ as $n \to \infty$.


We define a \emph{skewed Rademacher} random variable $\text{Rad}(p)$ as a random variable that takes value $+1$ with probability $p$, and $-1$ with probability $1-p$.
We write $d_{\mathrm{TV}}(\cdot, \cdot)$ for the total variation distance between two probability measures, $D_{\mathrm{KL}}(\cdot, \cdot)$ for the Kullback-Leibler divergence, and $\chi^2(\cdot \dbar \cdot)$ for the $\chi^2$-divergence.

\subsection{Detection}

We will consider the following notion of an algorithm achieving detection between $\mathcal{P}$ and $\mathcal{Q}$.
\begin{definition}[Weak and strong detection]
    An algorithm $A$ that takes as input a tournament $T$ on $n$ vertices and outputs an element of $\{0,1\}$ is said to achieve \emph{weak detection} between $\mathcal{P}$ and $\mathcal{Q}$ if there exists a constant $\delta > 0$ such that
    \begin{align*}
        \limsup_{n \to \infty} \left(\Px_{T \sim \mathcal{Q}}[A(T) = 1] + \Px_{T \sim \mathcal{P}}[A(T) = 0]\right) \le 1 - \delta.
    \end{align*}
    It is said to achieve \emph{strong detection} between $\mathcal{P}$ and $\mathcal{Q}$ if
    \begin{align*}
        \lim_{n \to \infty} \left(\Px_{T \sim \mathcal{Q}}[A(T) = 1] + \Px_{T \sim \mathcal{P}}[A(T) = 0]\right) = 0.
    \end{align*}
\end{definition}
\noindent
With this precise definition, let us now also restate our results on detection more carefully.

\begin{theorem}[Detection thresholds]\label{thm:detection-th}
    The following hold:
    \begin{enumerate}
    \item If $\gamma = \omega(n^{-3/4})$, then there exists a polynomial-time algorithm that achieves strong detection between $\mathcal{P}$ and $\mathcal{Q}$.

    \item If $\gamma = O(n^{-3/4})$, then exists no algorithm achieving strong detection between $\sP$ and $\sQ$ (i.e., strong detection is information-theoretically impossible).

    \item There exists a constant $c > 0$ such that, if $\gamma \ge c \cdot n^{-3/4}$, then there exists a polynomial-time algorithm that achieves weak detection between $\mathcal{P}$ and $\mathcal{Q}$.

    \item If $\gamma = o(n^{-3/4})$, then there exists no algorithm achieving weak detection between $\sP$ and $\sQ$ (i.e., weak detection is information-theoretically impossible).
    \end{enumerate}
\end{theorem}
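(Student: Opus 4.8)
The plan is to handle the two positive parts, (1) and (3), with a single explicit degree-$2$ statistic analyzed by Chebyshev's inequality, and the two impossibility parts, (2) and (4), with a second moment (i.e.\ $\chi^2$-divergence) computation; the critical exponent $n^{-3/4}$ will emerge from matching these.

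For parts (1) and (3), take the test statistic
\[
\varphi(T) \colonequals \sum_{i \in [n]} \Big(d_i(T) - \tfrac{n-1}{2}\Big)^2 = \tfrac14 \sum_{i \in [n]} \Big(\sum_{j \neq i} T_{ij}\Big)^2,
\]
where $d_i(T)$ is the out-degree (number of wins) of vertex $i$; this is a degree-$2$ polynomial in the entries of $T$. The first step is to compute first and second moments under both models. Using $d_i - \tfrac{n-1}{2} = \tfrac12\sum_{j\neq i} T_{ij}$ one gets $\EE_\sQ[\varphi] = \tfrac14 n(n-1)$, and using that conditionally on $\pi$ the vertex ranked $k$ has mean out-degree $\tfrac{n-1}{2} + \gamma(n+1-2k)$ (with $\sum_k (n+1-2k)^2$ independent of $\pi$, so that $\EE[\varphi \mid \pi]$ is a constant), one gets $\EE_\sP[\varphi] - \EE_\sQ[\varphi] = \tfrac13\gamma^2 n^3 (1+o(1))$. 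For the variances, under $\sQ$ the monomials $T_{ij}T_{ik}$ over distinct pairs $(\text{center } i,\ \text{unordered pair } \{j,k\})$ are uncorrelated, giving $\Var_\sQ[\varphi] = \Theta(n^3)$; conditioning on $\pi$ and splitting each $T_{ij}$ into its conditional mean plus a centered part gives $\Var_\sP[\varphi] = O(n^3 + \gamma^2 n^4)$. Parts (1) and (3) then follow by thresholding $\varphi$ at the midpoint of the two means and applying Chebyshev: the separation $\Theta(\gamma^2 n^3)$ exceeds both standard deviations by a growing factor exactly when $\gamma = \omega(n^{-3/4})$ (strong detection), while for $\gamma \ge c\,n^{-3/4}$ one has $\Var_\sQ[\varphi]/(\gamma^2 n^3)^2 = \Theta\big(1/(\gamma^4 n^3)\big) = O(1/c^4)$ and $\Var_\sP[\varphi]/(\gamma^2 n^3)^2 = O(1/c^4) + o(1)$, so the total error is bounded below $1$ once $c$ is a sufficiently large constant (weak detection).

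For parts (2) and (4), bound $\chi^2(\sP \dbar \sQ)$. The likelihood ratio of $\sP$ with respect to $\sQ$ is $L(T) = \EE_\pi \prod_{i<j}(1 + 2\gamma\, T_{ij}\, \pi(i,j))$, and using independence of the $T_{ij}$ under $\sQ$ one computes
\[
\chi^2(\sP \dbar \sQ) + 1 = \EE_{\pi,\pi'}\prod_{i<j}\big(1 + 4\gamma^2\,\pi(i,j)\pi'(i,j)\big) = \EE_{\pi,\pi'}\Big[(1+4\gamma^2)^{\binom{n}{2} - K}(1-4\gamma^2)^{K}\Big],
\]
where $\pi,\pi'$ are independent uniform permutations and $K = K(\pi,\pi')$ is their Kendall tau distance, which has the distribution of $\sum_{i=1}^{n-1} U_i$ with the $U_i$ independent and uniform on $\{0,1,\dots,i\}$. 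Pulling out $(1+4\gamma^2)^{\binom n2}$, the remaining factor is the moment generating function of $K$ evaluated at $-\log\tfrac{1+4\gamma^2}{1-4\gamma^2}$; applying Hoeffding's lemma to each $U_i$ (which has range $i$), the $\gamma^2$-order terms cancel against $\binom n2 \log(1+4\gamma^2)$ and only $\gamma^4 \sum_i i^2$ survives, yielding $\chi^2(\sP \dbar \sQ) + 1 \le \exp\!\big(O(\gamma^4 n^3)\big)$. Hence if $\gamma = o(n^{-3/4})$ then $\chi^2(\sP\dbar\sQ) = o(1)$, so $d_{\mathrm{TV}}(\sP,\sQ) \le \tfrac12\sqrt{\chi^2(\sP\dbar\sQ)\,} = o(1)$ and even weak detection is impossible, proving (4); and if $\gamma = O(n^{-3/4})$ then $\chi^2(\sP\dbar\sQ) = O(1)$, and a Cauchy--Schwarz argument on $L$ (if $d_{\mathrm{TV}}(\sP,\sQ) = 1-\epsilon$ then some event $A$ has $\sP(A) \ge 1-\epsilon$ and $\sQ(A) \le \epsilon$, whence $1-\epsilon \le \sqrt{\EE_\sQ[L^2]\,}\sqrt{\epsilon\,}$) forces $d_{\mathrm{TV}}(\sP,\sQ)$ to be bounded away from $1$, so strong detection is impossible, proving (2).

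I expect the main obstacle to be the second-moment bookkeeping rather than anything conceptual. On the algorithmic side the delicate point is the variance of $\varphi$ under $\sP$: conditioned on $\pi$ this is a degree-$2$ polynomial in independent but biased coordinates, whose variance involves cancellations (it must vanish as $\gamma \to \tfrac12$, where $T$ is a deterministic function of $\pi$ on which $\varphi$ is constant), so one must track the covariance terms carefully to see the bound is $O(n^3 + \gamma^2 n^4)$. On the impossibility side, one must (a) note that the crude bound $d_{\mathrm{TV}} \le \tfrac12\sqrt{\chi^2\,}$ only gives $d_{\mathrm{TV}} < 1$ when $\chi^2 < 4$ and so is insufficient for (2), making the Cauchy--Schwarz refinement necessary, and (b) control the lower-order terms in the moment generating function estimate precisely enough to confirm the scaling $\exp(\Theta(\gamma^4 n^3))$, which is what makes the algorithmic and information-theoretic thresholds coincide at $n^{-3/4}$.
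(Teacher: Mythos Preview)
Your positive parts (1) and (3) match the paper's proof essentially exactly: the paper also thresholds the statistic $f(T) = \sum_i \sum_{j<k,\, j,k\neq i} T_{ij}T_{ik}$, which it observes is (up to an affine shift) the sample variance of the win counts, i.e.\ your $\varphi$; it then computes the same first and second moments under $\sQ$ and $\sP$ and applies Chebyshev. One small discrepancy: the paper's variance bound under $\sP$ is $O(n^3 + \gamma^2 n^4 + \gamma^4 n^5)$ rather than your $O(n^3 + \gamma^2 n^4)$, the extra term coming from pairs of wedges sharing exactly one vertex. This does not affect the threshold (for $\gamma$ near $n^{-3/4}$ the $\gamma^4 n^5$ term is $o(n^3)$), but you should verify whether your tighter bound actually holds or whether you are missing this contribution.

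Your impossibility parts (2) and (4) are correct and take a genuinely different route from the paper. The paper expands the likelihood ratio in the Boolean Fourier basis $\{T^S\}$, writes $\chi^2 + 1 = \sum_S (\EE_\sP[T^S])^2$, and bounds this sum combinatorially by shape, using that only shapes whose connected components all have an even number of edges contribute and then counting such shapes by vertex and edge count. You instead recognize the second moment directly as $\EE_{\pi,\pi'}\big[(1+4\gamma^2)^{\binom n2 - K}(1-4\gamma^2)^K\big]$ with $K$ the Kendall tau distance, use that $K$ has the law of a sum of independent uniform variables (the inversion-table decomposition), and apply Hoeffding's lemma termwise. Your argument is arguably more direct: the identity $\binom n2 \log(1+4\gamma^2) - \tfrac t2 \binom n2 = \tfrac12 \binom n2 \log(1-16\gamma^4) \le 0$ makes the leading-order cancellation exact rather than asymptotic, and the whole bound avoids shape-counting entirely. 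The paper's Fourier approach, on the other hand, is what naturally singles out the length-two-path statistic used in parts (1) and (3), so it buys a unified motivation for the algorithm. Your Cauchy--Schwarz argument for (2) is essentially a direct proof of the contiguity statement the paper invokes.
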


The basic idea of the proofs of these statements is as follows.
For the negative results on information-theoretic impossibility, we bound the $\chi^2$-divergence between $\sP$ and $\sQ$ (which in turn controls the total variation distance and allows us show that detection algorithms cannot exist).
We bound this by Boolean Fourier analysis, expanding the likelihood ratio between $\sP$ and $\sQ$ in the Fourier basis.
This expansion also directs the proofs of the positive results: considering the low-degree Fourier modes of the likelihood ratio suggests a natural choice of a low-degree polynomial whose value should distinguish $\sP$ and $\sQ$, and we verify that this is indeed the case.

As an additional point of comparison, we give the following analysis of a natural spectral algorithm for the detection task.
As we detail in Section~\ref{sec:spectral}, if $T$ is a tournament matrix, then, for $\eye$ the imaginary unit, $\eye T$ is a Hermitian matrix, and thus this latter matrix has real eigenvalues, whose absolute values are also the singular values of $T$.
We consider the performance of an algorithm thresholding the largest eigenvalue of this matrix (equivalently, the largest singular value of $T$), and find that its performance is inferior by a polynomial factor in $n$ in the required signal strength $\gamma$ compared to our algorithm based on a simple low-degree polynomial.
\begin{theorem}[Spectral detection thresholds]
    \label{thm:spectral}
    Suppose $\gamma = c\cdot n^{-1/2}$.
    Then, the following hold:
    \begin{enumerate}
        \item If $T \sim \sQ$, then $\frac{1}{\sqrt{n}}\lambda_{\max}(\eye T) \to 2$ in probability.
        \item If $T \sim \sP$ and $c \leq \pi / 4$, then $\frac{1}{\sqrt{n}}\lambda_{\max}(\eye T) \to 2$ in probability.
        \item If $T \sim \sP$ and $c > \pi / 4$, then $\frac{1}{\sqrt{n}}\lambda_{\max}(\eye T) \geq 2 + f(c)$ for some $f(c) > 0$ with high probability.
    \end{enumerate}
\end{theorem}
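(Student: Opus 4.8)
The plan is to realize $\eye T$ as a Wigner random matrix plus an explicit deterministic Hermitian deformation, diagonalize the deformation in closed form, and then extract the top of the spectrum from the BBP-type phase transition for finite-rank deformations of Wigner matrices. First, since relabelling the vertices of $T$ by the hidden permutation $\pi$ conjugates $\eye T$ by a permutation matrix and hence preserves its eigenvalues, in the planted case I may assume $\pi = \mathrm{id}$. Writing $\eye T = \eye W + D$ with $D \colonequals \EE[\eye T]$ and $\eye W \colonequals \eye T - D$, the matrix $\eye W$ has independent, mean-zero, purely imaginary entries above the diagonal of modulus $1$ and variance $1 - 4\gamma^2 = 1 - o(1)$. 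Under $\sQ$ we have $D = 0$, so $\tfrac{1}{\sqrt n}\eye T$ is a Wigner matrix with bounded entries and flat variance profile, and the semicircle law together with convergence of the operator norm give $\tfrac{1}{\sqrt n}\lambda_{\max}(\eye T) \to 2$ in probability; this is Part 1. Under $\sP$ with $\pi = \mathrm{id}$ we have $\EE[T_{ij}] = 2\gamma$ for $i < j$, so $D = 2\gamma\,\eye S$, where $S_{ij} = \mathrm{sign}(j - i)$ is the skew-symmetric sign matrix.

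The second step is to diagonalize $\eye S$ exactly. From $(Sf)_i = \sum_{j > i} f_j - \sum_{j < i} f_j$ one reads off $(Sf)_{i+1} - (Sf)_i = -(f_i + f_{i+1})$, so any eigenvector of $\eye S$ with nonzero eigenvalue $\mu$ must be a geometric sequence, $f_{i+1}/f_i = r$ with $r = \tfrac{1 + \eye\mu}{\eye\mu - 1}$ of modulus $1$; imposing the two boundary identities $(Sf)_1 = \sum_{j > 1} f_j$ and $(Sf)_n = -\sum_{j < n} f_j$ forces $r^n = -1$ and, upon solving for $\mu$, produces the spectrum
\begin{equation}
\mu_k = -\cot\!\Big(\frac{(2k+1)\pi}{2n}\Big), \qquad k = 0, 1, \dots, n-1 .
\end{equation}
Thus the spectrum of $\eye S$ is symmetric about $0$ and, for each fixed $j$, the $j$-th largest eigenvalue of $\eye S$ equals $\cot\!\big(\tfrac{(2j-1)\pi}{2n}\big) = \tfrac{2n}{(2j-1)\pi}(1 + o(1))$. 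Hence, for $\gamma = c\, n^{-1/2}$, the $j$-th largest eigenvalue $\lambda_j(D)$ of $D = 2\gamma\,\eye S$ satisfies $\tfrac{1}{\sqrt n}\lambda_j(D) \to \theta_j \colonequals \tfrac{4c}{(2j-1)\pi}$, the $j$-th smallest converges to $-\theta_j$, and the largest ``spike'' is $\theta_1 = \tfrac{4c}{\pi}$.

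The third step handles Parts 2 and 3 by truncating $D$ to finite rank. For fixed $K$, I would split $D = D_{\le K} + D_{>K}$, where $D_{\le K}$ retains the $K$ largest and $K$ smallest eigenpairs (so its rank is $2K$) and $\|D_{>K}\| = \lambda_{K+1}(D) = \varepsilon_K \sqrt n\,(1 + o(1))$ with $\varepsilon_K \colonequals \tfrac{4c}{(2K+1)\pi} \to 0$ as $K \to \infty$. Since $\eye W + D_{\le K}$ is a bounded-entry Wigner matrix perturbed by a deterministic matrix of fixed rank whose rescaled eigenvalues converge to $\{\pm\theta_1, \dots, \pm\theta_K\}$, the finite-rank deformation theory for Wigner matrices gives, in probability,
\begin{equation}
\frac{1}{\sqrt n}\lambda_{\max}(\eye W + D_{\le K}) \longrightarrow \max\!\Big(2,\ \theta_1 + \frac{1}{\theta_1}\Big),
\end{equation}
the outlier term being present exactly when $\theta_1 > 1$; here one also uses that $\lambda_{\max}(D_{\le K})/\sqrt n < 1$ for every $n$ when $c \le \pi/4$ (because $\cot x < 1/x$ on $(0,\pi)$), so the deformation is genuinely subcritical in that regime. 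Weyl's inequalities applied to $\eye T = (\eye W + D_{\le K}) + D_{>K}$ give $|\lambda_{\max}(\eye T) - \lambda_{\max}(\eye W + D_{\le K})| \le \|D_{>K}\|$, so $\tfrac{1}{\sqrt n}\lambda_{\max}(\eye T)$ lies within $\varepsilon_K + o(1)$ of $\max(2, \theta_1 + 1/\theta_1)$; sending $n \to \infty$ and then $K \to \infty$ yields $\tfrac{1}{\sqrt n}\lambda_{\max}(\eye T) \to \max\!\big(2, \tfrac{4c}{\pi} + \tfrac{\pi}{4c}\big)$ in probability. This equals $2$ when $c \le \pi/4$, giving Part 2, and equals $2 + f(c)$ with $f(c) \colonequals \tfrac{4c}{\pi} + \tfrac{\pi}{4c} - 2 > 0$ when $c > \pi/4$, giving Part 3.

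The hard part will be invoking a deformed-Wigner statement that genuinely covers our ensemble: $\eye W$ is complex Hermitian with purely imaginary entries, so its pseudo-variance $\EE(\eye W)_{ij}^2$ is nonzero, the spike eigenvectors (eigenvectors of $\eye S$) are complex, and at $c = \pi/4$ the leading spike sits exactly at the critical value $1$ in the limit. I expect these features to be inessential — the edge of the bulk and the outlier location $\theta + 1/\theta$ depend only on the Stieltjes transform of the semicircle law and not on the pseudo-variance, and a critical spike produces no macroscopic outlier — but verifying that an off-the-shelf theorem (needing only bounded entries and a deterministic finite-rank perturbation) applies is the step requiring care. The one non-automatic direction inside it is the subcritical lower bound $\lambda_{\max}(\eye W + D_{\le K}) \ge \lambda_{K+1}(\eye W) = (2 + o(1))\sqrt n$, which follows by splitting $D_{\le K}$ into its positive and negative parts and using eigenvalue interlacing; this guarantees that the edge is not depressed below $2$.
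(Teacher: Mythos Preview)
Your proposal is correct and follows essentially the same approach as the paper: both decompose $\eye T$ as a Wigner matrix plus $2\gamma$ times the Hermitian sign matrix, diagonalize the latter explicitly to obtain the cotangent eigenvalues $\cot\!\big(\tfrac{(2j-1)\pi}{2n}\big)$, truncate the perturbation to fixed rank $2K$, invoke the finite-rank deformed-Wigner result of Capitaine--Donati-Martin--F\'eral, and control the truncation remainder by Weyl's inequality before sending $K \to \infty$. Your write-up is in fact slightly sharper, since you identify the explicit outlier location $\theta_1 + 1/\theta_1$ and hence $f(c) = \tfrac{4c}{\pi} + \tfrac{\pi}{4c} - 2$, whereas the paper leaves $f(c)$ abstract; your discussion of the pseudo-variance issue and of the interlacing lower bound in the subcritical regime also anticipates technical points the paper glosses over by directly citing the CDMF theorem.
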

\noindent
In words, the result says that the success of a detection algorithm computing and thresholding the leading order term of $\lambda_{\max}(\eye T)$ undergoes a transition around the critical value $\gamma = \frac{\pi}{4}n^{-1/2}$, much greater than the scale $\gamma \sim n^{-3/4}$ for which the success of our low-degree polynomial algorithm undergoes the same transition.
The proof of this result relates $\eye T$ to a complex-valued \emph{spiked matrix model}, a low-rank additive perturbation of a Hermitian matrix of i.i.d.\ noise.

\begin{remark}
    We would like to point out that technically, Theorem \ref{thm:spectral} does not rule out the existence of a spectral algorithm that successfully distinguishes $\mathcal{P}$ and $\mathcal{Q}$ by thresholding $\lambda_{\max}(\eye T)$ when $\gamma$ is below $\frac{\pi}{4}n^{-1/2}$, since we only focused on the behavior of $\lambda_{\max}(\eye T)$ to leading order and ignored the smaller $o(\sqrt{n})$ fluctuations around the leading order term. For some $\gamma < \frac{\pi}{4}n^{-1/2}$, potentially there could exist $\epsilon = \epsilon(n, \gamma) > 0$ such that $\lambda_{\max}(\eye T) > (2 + \epsilon)\sqrt{n}$ w.h.p.~for $T \sim \mathcal{P}$ but $\lambda_{\max}(\eye T) < (2 + \epsilon)\sqrt{n}$ w.h.p.~for $T \sim \mathcal{Q}$, thus leaving open the possibility of the success of the spectral algorithm below the threshold mentioned in Theorem \ref{thm:spectral}; however, our results imply that such $\epsilon$ would need to have $\epsilon = o(1)$ as $n \to \infty$. We believe this is an interesting issue to address in future work.
\end{remark}

\subsection{Recovery}

For the recovery tasks under the planted model, we use the following standard metric on the set of permutations to measure how much error an estimator makes.
\begin{definition}[Kendall tau metric]
    For $\pi_1, \pi_2 \in S_n$, we define
    \begin{align}
        d(\pi_1, \pi_2) \colonequals \sum_{(i,j): i <_{\pi_1} j } \boldsymbol{1}\{i >_{\pi_2} j\}.
    \end{align}
\end{definition}
\noindent
This metric, known as the Kendall tau distance between two permutations, counts the number of pairs of inconsistent comparisons between the two permutations.
The goal of the recovery task is then to find a permutation that is close to the hidden permutation in this metric when given a tournament $T$ drawn from the planted distribution $\mathcal{P}$.

\begin{remark}[Spearman's footrule distance]
    Another commonly-used distance between permutations is given by
    \begin{equation}
        d^{\prime}(\pi_1, \pi_2) \colonequals \sum_{i = 1}^n |\pi_1(i) - \pi_2(i)|.
    \end{equation}
    This is known as the \emph{Spearman's footrule} or \emph{dislocation distance} between $\pi_1$ and $\pi_2$.
    In fact, $d$ and $d^{\prime}$ are equivalent up to constant factors \cite{DG-1977-SpearmanFootruleDisarray}, so all of our results for $d$ will also hold for $d^{\prime}$.
\end{remark}

We note that choosing a permutation from $S_n$ uniformly at random will always in expectation recover half of the total number of pairs of comparisons of the hidden permutation, achieving an expected Kendall tau distance from the hidden permutation of $\frac{1}{2}\binom{n}{2}$. Thus, we define the following notion of success of recovery for this model.

\begin{definition}[Weak and strong recovery]
    An algorithm $A$ that takes in a tournament $T$ on $n$ vertices and outputs a permutation $\hat{\pi} \in S_n$ is said to achieve \emph{weak recovery} of the planted permutation if there exists a constant $\delta > 0$ such that
    \begin{align*}
        \limsup_{n \to \infty} \frac{\mathbb{E}_{(T, \pi) \sim \mathcal{P}}[d(\pi, \hat{\pi})]}{\binom{n}{2} } \le \frac{1}{2} - \delta.
    \end{align*}
    It is said to achieve \emph{strong recovery} of the planted permutation if
    \begin{align*}
        \lim_{n \to \infty} \frac{\mathbb{E}_{(T, \pi) \sim \mathcal{P}}[d(\pi, \hat{\pi})]}{\binom{n}{2} } = 0.
    \end{align*}
\end{definition}
\noindent
Using these definitions, we may give a precise version of our results on recovery.

\begin{theorem}[Recovery thresholds] \label{thm:recovery-th}
    Suppose $0\le \gamma\le 1/4$.
    The following hold:
    \begin{enumerate}
    \item If $\gamma = \omega(n^{-1/2})$, then a polynomial-time algorithm achieves strong recovery of the planted permutation.

    \item If $\gamma = \Theta(n^{-1/2})$, then no algorithm achieves strong recovery of the planted permutation (i.e., strong recovery is information-theoretically impossible).

    \item If $\gamma = \Theta(n^{-1/2})$, then a polynomial-time algorithm achieves weak recovery of the planted permutation.

    \item If $\gamma = o(n^{-1/2})$, then no algorithm achieves weak recovery of the planted permutation (i.e., weak recovery is information-theoretically impossible).
    \end{enumerate}
\end{theorem}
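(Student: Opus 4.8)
For the two positive statements (parts~1 and~3) I would analyze the Ranking By Wins algorithm: compute the out-degree $d_i \colonequals |\{j \ne i : T_{i,j} = 1\}|$ of each vertex (an $O(n^2)$-time computation) and output the ranking $\hat\pi$ that orders vertices in decreasing order of $d_i$, breaking ties uniformly at random. Conditioned on the planted permutation $\pi$, the entries of $T$ are mutually independent, so for distinct $i,j$ the quantity $d_j - d_i$ is a sum of $n-1$ independent $[-1,1]$-valued terms, and a direct computation gives $\mathbb{E}[d_j - d_i] = -2\gamma(\pi(j)-\pi(i))$ and $\mathrm{Var}(d_j - d_i) = \tfrac{n}{2}(1 - 4\gamma^2) = \Theta(n)$. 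Since $\mathbb{E}_{(T,\pi)\sim\sP}[d(\pi,\hat\pi)] = \sum_{i <_\pi j}\Px[\hat\pi\text{ inverts }\{i,j\}] \le \sum_{k=1}^{n-1}(n-k)\,\Px\!\left[d_j - d_i \ge 0 \mid \pi(j)-\pi(i)=k\right]$, for part~1 I would apply a Hoeffding bound, $\Px[d_j - d_i \ge 0] \le \exp(-\Omega(\gamma^2 k^2/n))$, so the sum is $O(n^{3/2}/\gamma)$ and $\mathbb{E}[d(\pi,\hat\pi)]/\binom{n}{2} = O(1/(\sqrt{n}\,\gamma)) = o(1)$ when $\gamma = \omega(n^{-1/2})$. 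For part~3, with $\gamma = c\,n^{-1/2}$ the Hoeffding bound only yields a constant, so I would instead use a Berry--Esseen estimate, $\Px[d_j - d_i \ge 0] \le \Phi(-2\gamma k/\sqrt{\mathrm{Var}}) + O(n^{-1/2})$ (the $O(n^{-1/2})$ also absorbing the tie probability), and after substituting $t = k/n$ conclude $\mathbb{E}[d(\pi,\hat\pi)]/\binom{n}{2} \le 2\int_0^1 (1-t)\,\Phi(-c' t)\,dt + o(1)$ for a constant $c' = \Theta(c)$. Since $\Phi(-c't) < \tfrac12$ for every $t > 0$, this integral is strictly below $\int_0^1 2(1-t)\,dt = \tfrac12$, with a gap bounded below by a positive constant (e.g.\ by restricting the integral to $t\in[\tfrac12,1]$), which gives weak recovery.

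For the two impossibility statements (parts~2 and~4) I would reduce to a pairwise estimation bound. For any estimator $\hat\pi$ and any pair $\{i,j\}$, whether $\hat\pi$ places $i$ before $j$ is a (deterministic or randomized) function of $T$, so $\Px[\hat\pi,\pi\text{ disagree on }\{i,j\}] \ge \mathbb{E}_T\!\left[\min\!\left(\Px[i<_\pi j\mid T],\ \Px[i>_\pi j\mid T]\right)\right] = \tfrac12 - \tfrac12\,d_{\mathrm{TV}}(\sP^{+}_{ij},\sP^{-}_{ij})$, where $\sP^{\pm}_{ij}$ is the law of $T$ under $\sP$ conditioned on $i <_\pi j$ (resp.\ $i >_\pi j$). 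Summing over pairs gives
\[
\frac{\mathbb{E}_{(T,\pi)\sim\sP}[d(\pi,\hat\pi)]}{\binom{n}{2}} \;\geq\; \frac{1}{2}\left(1 - \frac{1}{\binom{n}{2}}\sum_{i<j} d_{\mathrm{TV}}(\sP^{+}_{ij},\sP^{-}_{ij})\right),
\]
so it suffices to upper bound the average pairwise total variation: an $o(1)$ bound rules out weak recovery, and a bound bounded away from $1$ by a constant rules out strong recovery.

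To control $d_{\mathrm{TV}}(\sP^{+}_{ij},\sP^{-}_{ij})$ I would couple $\sP^{+}_{ij}$ and $\sP^{-}_{ij}$ via the involution $\pi\mapsto\pi'$ that transposes the ranks of $i$ and $j$, which carries the uniform law on $\{\pi:i<_\pi j\}$ to the uniform law on $\{\pi:i>_\pi j\}$. Conditioned on $\pi$, the matrix $T$ is a product of skewed Rademacher variables, and $\mathrm{law}(T\mid\pi)$ and $\mathrm{law}(T\mid\pi')$ differ only on the edge $\{i,j\}$ and on the two edges at each vertex $\ell$ whose rank lies strictly between those of $i$ and $j$ --- that is, on $2k-1$ coordinates, where $k = |\pi(i)-\pi(j)|$. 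Pinsker's inequality together with $D_{\mathrm{KL}}(\text{Rad}(\tfrac12+\gamma)\dbar\text{Rad}(\tfrac12-\gamma)) = O(\gamma^2)$ (valid since $\gamma\le\tfrac14$) then gives $d_{\mathrm{TV}}(\mathrm{law}(T\mid\pi),\mathrm{law}(T\mid\pi')) \le O(\gamma\sqrt{k})$, so by convexity of total variation along the coupling $d_{\mathrm{TV}}(\sP^{+}_{ij},\sP^{-}_{ij}) \le \mathbb{E}_{\pi}\!\left[\min\!\left(1,\ O(\gamma\sqrt{|\pi(i)-\pi(j)|})\right)\right]$. Averaging over $\{i,j\}$, using that $|\pi(i)-\pi(j)|$ equals $k$ with probability $(n-k)/\binom{n}{2}$, and writing $t = k/n$ with $\gamma = c\,n^{-1/2}$, the average pairwise total variation is at most $\int_0^1 2(1-t)\min(1,O(c\sqrt{t}))\,dt + o(1)$. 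If $\gamma = o(n^{-1/2})$ then $c = o(1)$ and this bound is $o(1)$, proving part~4; if $\gamma = \Theta(n^{-1/2})$ then $c$ is a fixed constant and, since $\min(1,O(c\sqrt{t})) < 1$ on a neighborhood of $t=0$, the bound is a constant strictly below $\int_0^1 2(1-t)\,dt = 1$, proving part~2.

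I expect the routine parts to be the exponential tail estimate in part~1 and the Pinsker computation in part~4; the two delicate points are as follows. For part~3, Chernoff-type bounds are too lossy, so one must push to CLT-level precision and then verify that the limiting constant $2\int_0^1(1-t)\Phi(-c't)\,dt$ is strictly below $\tfrac12$ with a gap that is uniform over the relevant bounded range of $c$. For part~2, the single-pair Pinsker bound becomes vacuous once $c$ is a large constant (it only says $d_{\mathrm{TV}}\le\min(1,O(c))$), so the argument must instead extract a constant-order deficit from the positive fraction of pairs at small rank-distance, whose conditional laws remain statistically close despite carrying more nominal signal; making this quantitative is where the work lies.
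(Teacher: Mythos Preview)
Your proposal is correct and follows the same architecture as the paper: Ranking By Wins for the positive results and a swap-coupling reduction for the negative results. A few tactical choices differ. For parts~1 and~3, the paper uses Berry--Esseen for both (not Hoeffding for part~1) and bounds the resulting sum $\sum_k (n-k)\,\Phi(-ck/\sqrt{n})$ via a concavity/Jensen argument rather than a Riemann-sum limit; it also adds a read-$k$-family concentration step to upgrade the expectation bound to a high-probability statement, which is more than the theorem as stated needs. For parts~2 and~4, the paper uses the same transposition $\pi\mapsto\pi^{\{i,j\}}$ but crudely bounds the number of coordinates on which $\sP_\pi$ and $\sP_{\pi^{\{i,j\}}}$ differ by $2n$ rather than your sharper $2|\pi(i)-\pi(j)|-1$; to keep the resulting total-variation bound strictly below $1$ when $\gamma\sqrt{n}$ is a large constant (part~2), it invokes the Bretagnolle--Huber inequality $d_{\mathrm{TV}}\le\sqrt{1-e^{-D_{\mathrm{KL}}}}$ alongside Pinsker, whereas you achieve the same effect by averaging the finer bound $O(\gamma\sqrt{k})$ over the rank-distance $k$. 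Both routes work; yours extracts the part-2 constant from pairs at small rank-distance, while the paper's extracts it uniformly over all pairs via Bretagnolle--Huber.
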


\begin{remark}
    Notice that in Theorem \ref{thm:recovery-th} we assume $\gamma \le 1/4$. This arises as a technical requirement for us, but the case $\gamma \ge 1/4$ is covered by many previous works such as \cite{braverman2007noisy, klein2011tolerant, rubinstein2017sorting, geissmann2017sorting, geissmann2018optimal}. In particular, \cite{braverman2007noisy} give a polynomial-time algorithm that w.h.p.~produces a permutation at Kendall tau distance $O(n)$ from the hidden permutation for any constant $\gamma > 0$.
\end{remark}

\begin{remark}
    While we defined an algorithm achieving weak recovery as one that performs better than a random guess \textbf{in expectation} above, our positive result for weak recovery in Part 3 of Theorem~\ref{thm:recovery-th} is actually stronger, in the sense that our algorithm performs better than a random guess \textbf{with high probability}, which will be made clear from our proof in Section~\ref{sec:recovery:upper}.
\end{remark}

For our proofs of the negative results on recovery, we bound a distributional distance similar to our detection argument, in this case controlling the Kullback-Leibler (KL) divergence between the distributions of tournaments given by the planted model with different planted permutations (this approach may also be viewed as relating the feasibility of recovery to that of detection between two different planted models).
For the positive results, we show that the following surprisingly simple algorithm in fact performs optimally.

\begin{definition}[Ranking by wins]
    The \emph{Ranking By Wins} algorithm takes as input a tournament matrix $T$ and outputs a permutation $\hat{\pi} \in S_n$ in the following way:
    \begin{enumerate}
    \item For each $i\in [n]$, compute a score $s_i = \sum_{k\in [n]}T_{i,k}$,
    \item Rank the elements $i\in [n]$ according to the scores $s_i$ from the highest to the lowest, under an arbitrary tie-breaking rule (say, ranking $i$ below $j$ if $i < j$ when $s_i = s_j$).
    \end{enumerate}
\end{definition}
\noindent
If we think of each directed edge $(i,j)$ as representing that player $i$ won against player $j$ in a round-robin tournament, then $s_i = 2\cdot \# \{\text{wins of player } i\} - (n-1)$. Therefore, ranking according to the scores $s_i$ is the same as ranking according to the number of wins of each player, hence the name of the algorithm.

\subsection{Alignment Maximization and Maximum Likelihood Estimation}
\label{sec:mle}

Finally, we state our results on optimizing the alignment objective function, which we recall is given by
\begin{equation}
    \aln(\hat{\pi}, T) = \sum_{(i,j)\in E(T)} \big(\boldsymbol{1}\{\hat{\pi}(i) < \hat{\pi}(j)\} - \boldsymbol{1}\{\hat{\pi}(i) > \hat{\pi}(j)\}\big).
\end{equation}

Let us first draw a connection to maximum likelihood estimation to explain why it the alignment objective is an interesting objective function.
Let $T$ be a tournament drawn from the planted distribution $\mathcal{P}$.
The likelihood function $\mathcal{L}(\hat{\pi} \mid T)$ in this case can be expressed as
\begin{align*}
    \mathcal{L}(\hat{\pi} \mid T) &= \Px_{T \sim \mathcal{P}}[T \mid \hat{\pi}]\\
    &= \prod_{(i,j)\in E(T)}\left(\frac{1}{2} + \gamma\right)^{\boldsymbol{1}\{\hat{\pi}(i) < \hat{\pi}(j)\}} \left(\frac{1}{2} - \gamma\right)^{\boldsymbol{1}\{\hat{\pi}(i) > \hat{\pi}(j)\}}\\
    &= \left(\frac{1}{2} + \gamma\right)^{\sum_{(i,j)\in E(T)} \boldsymbol{1}\{\hat{\pi}(i) < \hat{\pi}(j)\} }\left(\frac{1}{2} - \gamma\right)^{\sum_{(i,j)\in E(T)} \boldsymbol{1}\{\hat{\pi}(i) > \hat{\pi}(j)\} }\\
    &= \left(\frac{1}{2} - \gamma\right)^{\frac{1}{2}\binom{n}{2}} \left(\frac{1}{2} + \gamma\right)^{\frac{1}{2}\binom{n}{2}} \left(\frac{\frac{1}{2} + \gamma}{\frac{1}{2} - \gamma}\right)^{\frac{1}{2}\cdot \aln(\hat{\pi}, T)},
\end{align*}
where the last line follows from $\sum_{(i,j)\in E(T)} \boldsymbol{1}\{\hat{\pi}(i) < \hat{\pi}(j)\} + \sum_{(i,j)\in E(T)} \boldsymbol{1}\{\hat{\pi}(i) > \hat{\pi}(j)\} = \binom{n}{2}$. Thus, the maximizer of the alignment objective has the pleasant statistical interpretation of being the maximum likelihood estimator of the hidden permutation under the planted distribution $\mathcal{P}$, given the observation $T$.
On the other hand, computing the maximum likelihood estimator or (equivalently) optimizing the alignment objective for a general worst-case input $T$ is NP-hard, as mentioned earlier.

Nevertheless, as our results below will show, when we consider draws from the planted model when strong recovery is information-theoretically possible, then the same simple Ranking By Wins algorithm closely approximates the MLE.

\begin{theorem}[Alignment maximization]\label{thm:alignment-approx}
    Suppose $0\le \gamma \le 1/4$. For $\gamma = \omega(n^{-1/2})$, there exists a polynomial-time algorithm which, given input a tournament $T$ drawn from $\mathcal{P}$, outputs a permutation $\hat{\pi}$ that w.h.p.~satisfies
    \begin{equation}
        \aln(\hat{\pi}, T) \ge (1 - o(1) ) \cdot \max_{\Tilde{\pi} \in S_n} \, \aln(\Tilde{\pi}, T).
    \end{equation}
\end{theorem}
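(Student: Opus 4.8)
The plan is to show that the Ranking By Wins algorithm is itself the claimed $(1-o(1))$-approximation algorithm, by combining its strong recovery guarantee (Part 1 of Theorem~\ref{thm:recovery-th}) with matching upper and lower bounds on the alignment values involved. The starting point is a decomposition: conditioning on the planted permutation $\pi$, write $T_{i,j} = 2\gamma\,\pi(i,j) + N_{i,j}$ with $N_{i,j} \colonequals T_{i,j} - 2\gamma\,\pi(i,j)$, so that $\EE[N_{i,j}\mid\pi]=0$, the $N_{i,j}$ (over unordered pairs) are independent given $\pi$, and $|N_{i,j}|\le 2$. Then for every $\tilde\pi\in S_n$,
\[
\aln(\tilde\pi, T) \;=\; 2\gamma\sum_{i<j}\pi(i,j)\tilde\pi(i,j) \;+\; \sum_{i<j} N_{i,j}\,\tilde\pi(i,j) \;=\; 2\gamma\Big(\binom n2 - 2\,d(\pi,\tilde\pi)\Big) + N(\tilde\pi),
\]
where $N(\tilde\pi)\colonequals\sum_{i<j}N_{i,j}\,\tilde\pi(i,j)$ and $d$ is the Kendall tau metric.

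The technical core, which I expect to be the main obstacle, is the uniform bound $\max_{\tilde\pi\in S_n}|N(\tilde\pi)| = O(n^{3/2})$ with high probability, \emph{without} extraneous $\mathrm{polylog}(n)$ factors. (A crude union bound over all $n!$ permutations would lose a $\sqrt{\log n}$ and only let the final argument go through for $\gamma=\omega(n^{-1/2}\sqrt{\log n})$, not the claimed $\gamma=\omega(n^{-1/2})$.) I would prove it by symmetrizing $N_{i,j}$ against an independent copy, then applying the Rademacher contraction principle to pull out the (uniformly bounded) magnitudes, reducing the expectation to $\EE\max_{\tilde\pi}\big|\sum_{i<j}\varepsilon_{i,j}\tilde\pi(i,j)\big|$ for i.i.d.\ signs $\varepsilon_{i,j}$; this is exactly (twice) the largest deviation of the agreement of a uniformly random tournament with a linear order above its base value $\tfrac12\binom n2$, which is $O(n^{3/2})$ by the classical analysis of \cite{spencer1971optimal, de1983maximum}. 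Finally, McDiarmid's bounded-differences inequality---flipping a single $T_{i,j}$ changes $\max_{\tilde\pi}|N(\tilde\pi)|$ by at most $2$---upgrades the expectation bound to the desired high-probability statement.

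Given this bound, the rest is bookkeeping. Since $\binom n2 - 2d(\pi,\tilde\pi)\le\binom n2$, we get $\max_{\tilde\pi}\aln(\tilde\pi, T) \le 2\gamma\binom n2 + \max_{\tilde\pi}|N(\tilde\pi)| = 2\gamma\binom n2(1+o(1))$ w.h.p., using $n^{3/2}=o(\gamma n^2)$. For the matching lower bound, let $\hat\pi$ be the output of Ranking By Wins; by Part 1 of Theorem~\ref{thm:recovery-th} it achieves strong recovery, so $\EE[d(\pi,\hat\pi)]=o(n^2)$ and hence $d(\pi,\hat\pi)=o(n^2)$ w.h.p.\ by Markov. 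Therefore, w.h.p.,
\[
\aln(\hat\pi, T) = 2\gamma\Big(\binom n2 - 2\,d(\pi,\hat\pi)\Big) + N(\hat\pi) \;\ge\; 2\gamma\binom n2 - 4\gamma\, d(\pi,\hat\pi) - \max_{\tilde\pi}|N(\tilde\pi)| \;=\; 2\gamma\binom n2(1-o(1)),
\]
since $\gamma\, d(\pi,\hat\pi)=o(\gamma n^2)$ and $n^{3/2}=o(\gamma n^2)$. Combining the two displays gives $\aln(\hat\pi,T)\ge\frac{1-o(1)}{1+o(1)}\max_{\tilde\pi}\aln(\tilde\pi,T)=(1-o(1))\max_{\tilde\pi}\aln(\tilde\pi,T)$ w.h.p.\ (the maximum is positive, being at least $\aln(\hat\pi,T)>0$), which is the claim; note the algorithm is polynomial time. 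So, modulo the sharp uniform control of $N(\cdot)$, the approximation guarantee is an essentially immediate consequence of the already-established strong recovery of Ranking By Wins.
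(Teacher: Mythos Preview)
Your proposal is correct, and it takes a genuinely different route from the paper's proof.

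The paper establishes the OPT upper bound $\max_{\tilde\pi}\aln(\tilde\pi,T)\le 2\gamma\binom n2+O(n^{3/2})$ by directly adapting the multiscale argument of \cite{de1983maximum} to the planted model (Proposition~\ref{prop:opt-high-prob-bound}), and then---rather than invoking the recovery theorem---carries out a \emph{fresh} analysis of the alignment value achieved by Ranking By Wins: it lower-bounds $\aln(\hat\pi,T)$ by a sum of Boolean indicators, applies read-$k$ concentration (Theorem~\ref{thm:tail-read-k}) for the fluctuations, and computes the expectation via the Berry--Esseen theorem and the concavity lemma (Lemma~\ref{lem:concavity-Phi-expr}), arriving at $\aln(\hat\pi,T)\ge(1-o(1))\cdot 2\gamma\binom n2$ w.h.p.

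Your argument is more modular. The decomposition $\aln(\tilde\pi,T)=2\gamma\big(\binom n2-2d(\pi,\tilde\pi)\big)+N(\tilde\pi)$ isolates a single analytic task---the uniform bound $\max_{\tilde\pi}|N(\tilde\pi)|=O(n^{3/2})$---after which both the OPT upper bound and the Ranking By Wins lower bound fall out, the latter by black-boxing Part~1 of Theorem~\ref{thm:recovery-th}. Your symmetrization/contraction step cleanly reduces the uniform noise bound to the classical random-tournament case of \cite{de1983maximum}, whereas the paper re-runs that chaining argument inside the planted model. A pleasant byproduct of your route is that it shows the conclusion holds for \emph{any} estimator achieving strong recovery, not just Ranking By Wins---which sharpens the paper's remark (before Theorem~\ref{thm:alignment-approx}) that good Kendall-tau recovery does not \emph{a priori} yield a good alignment approximation: your argument shows it does, once one has the uniform $O(n^{3/2})$ noise control. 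The paper's direct computation, on the other hand, gives a slightly more explicit quantitative lower bound on $\aln(\hat\pi,T)$ in terms of $\Phi$.
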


\begin{remark}
    Again in Theorem \ref{thm:alignment-approx} we focus on the case $\gamma \leq 1/4$ for technical reasons. In the case of $\gamma \ge 1/4$, the maximum likelihood estimator can be computed exactly in polynomial time w.h.p.~\cite{braverman2007noisy}, which equivalently exactly maximizes the alignment objective.
\end{remark}

While this algorithm will be the same as the algorithm for estimating the hidden permutation under the Kendall tau distance, we emphasize that a good such estimator $\hat{\pi}$ does \emph{not} necessarily \emph{a priori} give a good approximate maximizer of the alignment objective.
Yet, it turns out that the Ranking By Wins estimator does have this property, which requires further probabilistic analysis of its behavior.

\section{Proofs of Detection Thresholds}

In this section, we prove Theorems~\ref{thm:detection-th} and \ref{thm:spectral}.
Recall that Theorem~\ref{thm:detection-th} has four parts; Parts 1 and 3 claim that polynomial-time algorithms for (weak and strong, respectively) detection exist, while Parts 2 and 4 claim that (weak and strong, respectively) detection is information-theoretically impossible.
It will turn out our treatments of weak and strong detection in each case are very closely related.
Thus, this section is organized first by giving some preliminaries in Section~\ref{sec:detection:prelim}, then the proofs of Parts 2 and 4 in Section~\ref{sec:detection:lower}, and finally the proofs of Parts 1 and 3 in Section~\ref{sec:detection:upper}.
We then prove Theorem~\ref{thm:spectral} in Section~\ref{sec:spectral}.

\subsection{Preliminaries}
\label{sec:detection:prelim}

We begin with some tools for performing Boolean Fourier analysis over tournaments.
\begin{definition}
    A \emph{shape} $S$ is a subset of the undirected edges of the complete graph $K_n$, with its vertices viewed as labelled by $[n]$.
    We will use $V(S) \subseteq [n]$ to denote the set of vertices that any edges of $S$ are incident to.
    Let $T \in \{\pm 1\}^{\binom{[n]}{2} }$ be a tournament matrix. We use $T^S$ to denote the monomial
    \begin{equation*}
        T^S \colonequals \prod_{\{i,j\} \in S: i < j} T_{i,j}.
    \end{equation*}
\end{definition}

Since under $T \sim \sQ$ the entries above the diagonal of $T$ are i.i.d.\ with entries drawn uniformly at random from $\{\pm 1\}$, the $T^S$ over all $S$ form an orthonormal basis of the space of real-valued functions of tournaments endowed with the inner product
\begin{equation}
    \langle f, g \rangle_{\mathcal{Q} } \colonequals \mathbb{E}_{\mathcal{Q}}[fg].
\end{equation}
This follows since the $T^S$ are just the suitable standard Boolean Fourier basis of dimension $\binom{n}{s}$.
We record some useful properties of this basis, starting with two results that give a direct proof of this orthonormality.

\begin{proposition}
    Let $S_1, S_2 \subseteq \binom{[n]}{2}$ be two shapes. For any tournament matrix $T$, we have $T^{S_1} T^{S_2} = T^{S_1 \triangle S_2}$.
\end{proposition}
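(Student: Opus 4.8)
The plan is to unwind the definition of the monomials $T^{S}$ and exploit the single structural fact that each entry of a tournament matrix is $\pm 1$-valued, so that $T_{i,j}^2 = 1$. Concretely, I would write
\[
T^{S_1} T^{S_2} = \Bigg(\prod_{\{i,j\}\in S_1: i<j} T_{i,j}\Bigg)\Bigg(\prod_{\{i,j\}\in S_2: i<j} T_{i,j}\Bigg) = \prod_{\{i,j\}: i<j} T_{i,j}^{\,m(i,j)},
\]
where $m(i,j) \in \{0,1,2\}$ is the number of the two sets $S_1, S_2$ that contain the edge $\{i,j\}$. I would then split the product according to the value of $m(i,j)$: edges with $m(i,j)=0$ contribute the empty factor $1$; edges with $m(i,j)=2$, i.e.\ edges in $S_1 \cap S_2$, contribute $T_{i,j}^2 = 1$; and edges with $m(i,j)=1$, i.e.\ edges in $S_1 \triangle S_2$, contribute exactly one factor of $T_{i,j}$.

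Putting these together, only the factors coming from $S_1 \triangle S_2$ survive, so
\[
T^{S_1} T^{S_2} = \prod_{\{i,j\}\in S_1\triangle S_2: i<j} T_{i,j} = T^{S_1 \triangle S_2},
\]
which is the claim. I would note once, for cleanliness, that the ordering convention ``$i<j$'' in the definition of $T^S$ makes the monomial well defined independently of how each undirected edge is oriented, so there is no ambiguity in the factorization above.

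Honestly, there is no real obstacle here: the statement is a bookkeeping identity, and the only ``idea'' is that squaring a $\pm 1$ entry gives $1$, which is precisely what turns ordinary multiplication of monomials into the symmetric-difference operation on their index sets. If anything, the mild point to be careful about is just to phrase the grouping-by-edge step cleanly (e.g.\ via the exponent function $m(i,j)$ or via $\mathbf{1}\{\{i,j\}\in S_1\} + \mathbf{1}\{\{i,j\}\in S_2\}$) so that the three cases $m \in \{0,1,2\}$ are handled uniformly rather than by an ad hoc case split.
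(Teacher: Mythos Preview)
Your proof is correct and is exactly the natural argument: group factors edge by edge, use $T_{i,j}^2=1$ to kill the edges in $S_1\cap S_2$, and observe that only edges in $S_1\triangle S_2$ survive. The paper in fact states this proposition without proof, treating it as an immediate consequence of the entries being $\pm 1$-valued; your write-up simply makes that implicit reasoning explicit.
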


\begin{proposition}
    Let $S \subseteq \binom{[n]}{2}$ be a non-empty shape. Then, $\mathbb{E}_{\mathcal{Q} }[T^S] = 0$.
\end{proposition}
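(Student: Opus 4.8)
The plan is to use the independence structure of the null model directly. Recall that under $T \sim \sQ$ the entries $T_{i,j}$ with $i < j$ are i.i.d.\ $\mathrm{Rad}(\tfrac12)$, i.e.\ uniform on $\{\pm1\}$. By definition,
\begin{equation*}
    T^S = \prod_{\{i,j\} \in S: i < j} T_{i,j},
\end{equation*}
and since $S$ is a \emph{set} of edges, the factors on the right are indexed by distinct pairs $\{i,j\}$, each entry appearing to the first power; hence they are distinct members of the i.i.d.\ family $\{T_{i,j}\}_{i<j}$ and are therefore mutually independent. Consequently the expectation factorizes,
\begin{equation*}
    \Ex_{\sQ}[T^S] = \prod_{\{i,j\} \in S: i < j} \Ex_{\sQ}[T_{i,j}].
\end{equation*}

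Now I would observe that $S \neq \emptyset$ guarantees the product is nonempty, so it has at least one factor, and each factor equals $\Ex[\mathrm{Rad}(\tfrac12)] = \tfrac12(+1) + \tfrac12(-1) = 0$. Therefore the whole product is $0$, which is the claim. There is essentially no obstacle here: the only point worth a moment's care is that the monomial $T^S$ really is a product of \emph{distinct} independent coordinates (no repeated factors that could form squares), which is immediate because a shape is a set of edges and the convention $i<j$ picks out one representative entry per edge. One could alternatively note that the statement is just the standard fact that the non-constant Fourier characters of the uniform measure on $\{\pm1\}^{\binom{[n]}{2}}$ have mean zero, already implicit in the preceding discussion identifying $\{T^S\}$ with the Boolean Fourier basis.
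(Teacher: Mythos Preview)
Your proof is correct. The paper in fact states this proposition without proof, treating it as an immediate consequence of the definition of $\sQ$ (the upper-diagonal entries of $T$ are i.i.d.\ uniform $\pm 1$) together with the standard observation that the non-constant Boolean Fourier characters have mean zero; your argument spells out exactly this computation.
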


\begin{corollary}[Orthonormality]
    For $S_1, S_2 \subseteq [n]$, if $f_i(T) = T^{S_i}$, then $\langle f_1, f_2\rangle_{\sQ} = \boldsymbol{1}\{S_1 = S_2\}$.
\end{corollary}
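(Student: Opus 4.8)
The plan is to deduce this immediately from the two preceding propositions, since orthonormality is essentially their combination. First I would unfold the definition of the inner product: by definition, $\langle f_1, f_2 \rangle_{\sQ} = \mathbb{E}_{\sQ}[f_1 f_2] = \mathbb{E}_{\sQ}[T^{S_1} T^{S_2}]$. Then I would apply the first proposition (multiplicativity of the monomials, $T^{S_1} T^{S_2} = T^{S_1 \triangle S_2}$) to rewrite this as $\mathbb{E}_{\sQ}[T^{S_1 \triangle S_2}]$.

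Next I would split into the two cases according to whether $S_1 = S_2$. If $S_1 = S_2$, then $S_1 \triangle S_2 = \emptyset$, and the empty product defining $T^{\emptyset}$ equals $1$, so $\mathbb{E}_{\sQ}[T^{S_1 \triangle S_2}] = \mathbb{E}_{\sQ}[1] = 1$. If $S_1 \neq S_2$, then $S_1 \triangle S_2$ is a non-empty shape, so the second proposition gives $\mathbb{E}_{\sQ}[T^{S_1 \triangle S_2}] = 0$. Combining the two cases yields $\langle f_1, f_2 \rangle_{\sQ} = \boldsymbol{1}\{S_1 = S_2\}$, as claimed.

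There is no real obstacle here: the corollary is a formal consequence of the two propositions, and the only thing to be careful about is the convention that $T^{\emptyset} = 1$ (so that the diagonal case works out) and that ``shape'' in the corollary statement should be read as a subset of $\binom{[n]}{2}$ rather than of $[n]$, matching the propositions. The genuine content lies in the propositions themselves (multiplicativity is immediate from $T_{i,j}^2 = 1$, and the vanishing of $\mathbb{E}_{\sQ}[T^S]$ for non-empty $S$ follows from independence of the above-diagonal entries together with $\mathbb{E}_{\sQ}[T_{i,j}] = 0$), which I would assume as already established.
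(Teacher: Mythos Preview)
Your proposal is correct and is exactly the intended argument: the paper states this corollary without proof precisely because it follows immediately from the two preceding propositions in the way you describe. Your observation about the typo in the statement (the $S_i$ should be subsets of $\binom{[n]}{2}$, not $[n]$) is also accurate.
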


We will also need some tools for decomposing the likelihood ratio between $\sP$ and $\sQ$ in this basis, which, as we will see, amounts to evaluating the expectations of $T^S$ under $T \sim \sP$.

\begin{proposition}[Planted expectations] \label{prop:fourier-expectation}
    Let $S \subseteq \binom{[n]}{2}$ be a shape. Then,
    \begin{align*}
        \Ex_{T \sim \mathcal{P} }[T^S] &= (2\gamma)^{|S|} \Ex_{\pi \sim \mathrm{Unif}(S_n)}\left[(-1)^{\sum_{\{i,j\}\in S: i < j} \boldsymbol{1}\{\pi(i) > \pi(j)\} } \right].
    \end{align*}
\end{proposition}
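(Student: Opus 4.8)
The plan is to condition on the planted permutation $\pi$ and exploit the conditional independence of the entries of $T$. Recall that under $\mathcal{P}$, once $\pi$ is fixed, the random variables $\{T_{i,j} : i < j\}$ (indices in label order) are mutually independent, with $T_{i,j}$ distributed as $\mathrm{Rad}(\tfrac12 + \gamma)$ when $\pi(i) < \pi(j)$ and as $\mathrm{Rad}(\tfrac12 - \gamma)$ when $\pi(i) > \pi(j)$. The first step is therefore to record the conditional first moment of a single entry: for any $i < j$,
\begin{equation*}
    \Ex_{T \sim \mathcal{P}}[T_{i,j} \mid \pi] = 2\gamma \cdot \boldsymbol{1}\{\pi(i) < \pi(j)\} - 2\gamma \cdot \boldsymbol{1}\{\pi(i) > \pi(j)\} = 2\gamma \,(-1)^{\boldsymbol{1}\{\pi(i) > \pi(j)\}},
\end{equation*}
which is just the mean of a skewed Rademacher variable.

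The second step is to multiply these out. Since the monomial $T^S = \prod_{\{i,j\}\in S:\, i<j} T_{i,j}$ is a product of distinct entries, and those entries are conditionally independent given $\pi$, we get
\begin{equation*}
    \Ex_{T \sim \mathcal{P}}[T^S \mid \pi] = \prod_{\{i,j\}\in S:\, i<j} \Ex_{T \sim \mathcal{P}}[T_{i,j} \mid \pi] = (2\gamma)^{|S|}\,(-1)^{\sum_{\{i,j\}\in S:\, i<j} \boldsymbol{1}\{\pi(i) > \pi(j)\}}.
\end{equation*}
The third and final step is to apply the tower property and average over $\pi \sim \mathrm{Unif}(S_n)$, which immediately yields the claimed identity.

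There is no real obstacle here; the only points requiring a little care are bookkeeping conventions — that the product defining $T^S$ ranges over pairs written in label order $i<j$, so the exponent counting ``inversions of $\pi$ restricted to the edges of $S$'' must use the same ordering — and noting explicitly that conditioning on $\pi$ is what makes the entries independent (under the planted model they are not independent unconditionally, since $\pi$ is random). Both are immediate from the definition of $\mathcal{P}$ in the introduction, so the proof is short.
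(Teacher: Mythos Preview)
Your proposal is correct and follows essentially the same approach as the paper: condition on $\pi$, use the conditional independence of the entries $T_{i,j}$ to factor the expectation of $T^S$ into a product of single-edge means $(-1)^{\boldsymbol{1}\{\pi(i)>\pi(j)\}}(2\gamma)$, and then average over $\pi$. The only cosmetic difference is that the paper writes $\mathcal{P}_{\pi}$ for the conditional law where you use conditional expectation notation, but the argument is identical.
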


\begin{proof}
    Recall that, to sample a tournament $T$ from $\mathcal{P}$, one may first sample a permutation $\pi \in S_n$ uniformly at random, and then generate a tournament $T$ that correlates suitably with $\pi$. For a fixed $\pi \in S_n$, let us denote by $\sP_{\pi}$ the distribution $\sP$ conditional on the hidden permutation being $\pi$. In particular, notice that $\mathcal{P}_{\pi}$ is a product distribution, where each directed edge of $T \sim \mathcal{P}_{\pi}$ is chosen independently between all pairs of $i,j$ (but with different distributions depending on $\pi$). Then, we have
    \begin{align*}
        \Ex_{T \sim \mathcal{P} }[T^S] &= \Ex_{\pi \sim \mathrm{Unif}(S_n)} \Ex_{T \sim \mathcal{P}_{\pi}}[T^S]\\
        &= \Ex_{\pi \sim \mathrm{Unif}(S_n)} \prod_{\{i,j\} \in S: i<j} \Ex_{T \sim \mathcal{P}_{\pi}}[T_{i,j}] \\
        &= \Ex_{\pi \sim \textrm{Unif}(S_n)} \prod_{\{i,j\} \in S: i<j} \left((-1)^{\boldsymbol{1}\{\pi(i)>\pi(j)\}} (2\gamma)\right) \\
        &= (2\gamma)^{|S|} \Ex_{\pi \sim \mathrm{Unif}(S_n)}\left[(-1)^{\sum_{\{i,j\}\in S: i < j} \boldsymbol{1}\{\pi(i) > \pi(j)\} } \right],
    \end{align*}
    completing the proof.
\end{proof}

\begin{proposition} \label{prop:component-independence}
    Let $S = S_1 \sqcup S_2$ be a shape with two vertex-disjoint components $S_1$ and $S_2$. Then,
    \begin{align*}
        \Ex_{T \sim \mathcal{P} }[T^S] = \Ex_{T \sim \mathcal{P} }[T^{S_1}]\Ex_{T \sim \mathcal{P} } [T^{S_2}].
    \end{align*}
\end{proposition}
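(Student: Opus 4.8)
The plan is to reduce to Proposition~\ref{prop:fourier-expectation} and then prove an independence statement about uniformly random permutations. Write $\mathrm{inv}_S(\pi) \colonequals \sum_{\{i,j\}\in S:\, i<j} \boldsymbol{1}\{\pi(i) > \pi(j)\}$, so that Proposition~\ref{prop:fourier-expectation} reads $\Ex_{T\sim\mathcal{P}}[T^S] = (2\gamma)^{|S|}\,\Ex_{\pi}[(-1)^{\mathrm{inv}_S(\pi)}]$, and similarly for $S_1$ and $S_2$. Since $S = S_1 \sqcup S_2$ is a disjoint union of edge sets, we have $|S| = |S_1| + |S_2|$ and $\mathrm{inv}_S(\pi) = \mathrm{inv}_{S_1}(\pi) + \mathrm{inv}_{S_2}(\pi)$, hence $(2\gamma)^{|S|} = (2\gamma)^{|S_1|}(2\gamma)^{|S_2|}$ and $(-1)^{\mathrm{inv}_S(\pi)} = (-1)^{\mathrm{inv}_{S_1}(\pi)}\cdot(-1)^{\mathrm{inv}_{S_2}(\pi)}$. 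It therefore suffices to show that the two random variables $(-1)^{\mathrm{inv}_{S_1}(\pi)}$ and $(-1)^{\mathrm{inv}_{S_2}(\pi)}$ are independent when $\pi \sim \mathrm{Unif}(S_n)$; then the expectation of their product factors, and applying Proposition~\ref{prop:fourier-expectation} once more to each factor recovers $\Ex_{T\sim\mathcal{P}}[T^{S_1}]\Ex_{T\sim\mathcal{P}}[T^{S_2}]$.

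The key observation is that $\mathrm{inv}_{S_k}(\pi)$ depends on $\pi$ only through the \emph{relative order} that $\pi$ induces on $V(S_k)$ — immediate since every edge of $S_k$ has both endpoints in $V(S_k)$ — and that $V(S_1) \cap V(S_2) = \emptyset$ by hypothesis. To make the independence precise I would use the standard coupling: let $U_1, \dots, U_n$ be i.i.d.\ $\mathrm{Unif}[0,1]$ and let $\pi$ be the (almost surely unique) permutation with $U_{\pi^{-1}(1)} < \cdots < U_{\pi^{-1}(n)}$, which is uniform on $S_n$. Under this coupling $\mathrm{inv}_{S_1}(\pi)$ is a function of $(U_i)_{i \in V(S_1)}$ alone and $\mathrm{inv}_{S_2}(\pi)$ is a function of $(U_i)_{i \in V(S_2)}$ alone; as these coordinate blocks are disjoint and hence independent, so are the two random variables, which gives the claim.

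Alternatively, one can argue combinatorially: for fixed total orders $\rho_1$ on $V(S_1)$ and $\rho_2$ on $V(S_2)$, with $a \colonequals |V(S_1)|$ and $b \colonequals |V(S_2)|$, the number of $\pi \in S_n$ whose induced orders on $V(S_1)$ and $V(S_2)$ are $\rho_1$ and $\rho_2$ equals $\binom{n}{a+b}\binom{a+b}{a}(n-a-b)! = n!/(a!\,b!)$, which does not depend on $\rho_1, \rho_2$; thus the pair of induced orders is uniform on the product set of orderings, again yielding the independence. I do not anticipate a serious obstacle: the only points requiring care are the elementary verification that $\mathrm{inv}_{S_k}$ factors through the induced order on $V(S_k)$, and correctly invoking the vertex-disjointness hypothesis, which is precisely what supplies the independence (the statement is false without it).
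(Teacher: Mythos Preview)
Your proof is correct and rests on the same key fact as the paper's: the relative orders that a uniformly random permutation induces on the disjoint vertex sets $V(S_1)$ and $V(S_2)$ are independent. The paper states this independence directly for $T^{S_1}$ and $T^{S_2}$ under $\mathcal{P}$ in a single sentence, whereas you first pass through Proposition~\ref{prop:fourier-expectation} and then supply an explicit coupling (and a counting argument) to justify the independence; your version is more detailed but the route is essentially the same.
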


\begin{proof}
    Since $S_1, S_2$ are vertex disjoint, the distribution of $T^{S_1}$ and $T^{S_2}$ under $\mathcal{P}$ are independent, as we can independently sample a permutation $\pi_1$ on the vertex set of $S_1$ and a permutation $\pi_2$ on the vertex set of $S_2$, and then sample the directed edges used in $S_1$ and $S_2$ which correlate with $\pi_1$ and $\pi_2$ respectively. Thus, $\mathbb{E}_{\mathcal{P} }[T^{S_1 \sqcup S_2}] = \mathbb{E}_{\mathcal{P} }[T^{S_1}] \mathbb{E}_{\mathcal{P} }[T^{S_2}]$.
\end{proof}

\begin{proposition} \label{prop:edge-decay}
    Let $S \subseteq \binom{[n]}{2}$ be a shape. Then, $\left|\mathbb{E}_{\mathcal{P} }[T^S]\right| \le (2\gamma)^{|S|}$.
\end{proposition}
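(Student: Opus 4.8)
The plan is to simply invoke Proposition~\ref{prop:fourier-expectation} and then bound the resulting permutation expectation by the trivial bound on a bounded random variable. Concretely, Proposition~\ref{prop:fourier-expectation} gives the exact identity
\begin{equation*}
    \Ex_{T \sim \mathcal{P}}[T^S] = (2\gamma)^{|S|} \, \Ex_{\pi \sim \mathrm{Unif}(S_n)}\left[(-1)^{\sum_{\{i,j\}\in S: i < j} \boldsymbol{1}\{\pi(i) > \pi(j)\}}\right].
\end{equation*}
The random variable $(-1)^{\sum_{\{i,j\}\in S: i < j} \boldsymbol{1}\{\pi(i) > \pi(j)\}}$ takes values in $\{\pm 1\}$ for every $\pi$, so its expectation over $\pi \sim \mathrm{Unif}(S_n)$ lies in $[-1, 1]$ and in particular has absolute value at most $1$. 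Taking absolute values of both sides and using $|2\gamma|^{|S|} = (2\gamma)^{|S|}$ (since $\gamma \ge 0$) yields $\left|\Ex_{\mathcal{P}}[T^S]\right| \le (2\gamma)^{|S|}$, as desired.

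There is no real obstacle here: the statement is an immediate corollary of the already-established Fourier expectation formula together with the observation that an average of signs is bounded by one in absolute value. If one wished to avoid citing Proposition~\ref{prop:fourier-expectation}, an alternative is to argue directly: condition on the planted permutation $\pi$, use that $\mathcal{P}_\pi$ is a product distribution so that $\Ex_{T \sim \mathcal{P}_\pi}[T^S] = \prod_{\{i,j\}\in S} \Ex_{T \sim \mathcal{P}_\pi}[T_{i,j}]$, note each factor equals $\pm 2\gamma$, hence $|\Ex_{T \sim \mathcal{P}_\pi}[T^S]| = (2\gamma)^{|S|}$ for every fixed $\pi$, and finally apply Jensen's inequality (or simply the triangle inequality for expectations) to pass to the average over $\pi$. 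Either route is a few lines; I would present the first, shorter one.
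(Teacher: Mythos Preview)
Your proposal is correct and follows exactly the same approach as the paper, which simply states that the result follows directly from Proposition~\ref{prop:fourier-expectation}. You have merely made explicit the trivial step (that an average of $\pm 1$ values has absolute value at most one) which the paper leaves implicit.
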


\begin{proof}
    This follows directly from Proposition \ref{prop:fourier-expectation}.
\end{proof}

\begin{proposition} \label{prop:odd-cancellation}
    Let $S \subseteq \binom{[n]}{2}$ be a shape. If $S$ has an odd number of edges, then $\mathbb{E}_{\mathcal{P}}[T^S] = 0$.
\end{proposition}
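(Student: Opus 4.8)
The plan is to apply Proposition~\ref{prop:fourier-expectation} to reduce the claim to a statement about a sign-weighted average over permutations, and then exhibit a measure-preserving involution on $S_n$ under which that sign flips when $|S|$ is odd. By Proposition~\ref{prop:fourier-expectation}, it suffices to show that
\[
    \Ex_{\pi \sim \mathrm{Unif}(S_n)}\left[g(\pi)\right] = 0, \qquad \text{where } g(\pi) \colonequals (-1)^{\sum_{\{i,j\}\in S:\, i < j} \boldsymbol{1}\{\pi(i) > \pi(j)\}},
\]
whenever $|S|$ is odd.

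The key observation is to consider the \emph{order-reversing} map $\rho \colon S_n \to S_n$ defined by $\rho(\pi)(i) \colonequals n + 1 - \pi(i)$. This is a bijection of $S_n$ onto itself, hence preserves the uniform distribution. Moreover, for any pair $i \neq j$ we have $\rho(\pi)(i) > \rho(\pi)(j) \iff \pi(i) < \pi(j)$, so $\boldsymbol{1}\{\rho(\pi)(i) > \rho(\pi)(j)\} = 1 - \boldsymbol{1}\{\pi(i) > \pi(j)\}$ (using $\pi(i) \neq \pi(j)$). Summing over the $|S|$ edges of $S$,
\[
    \sum_{\{i,j\}\in S:\, i<j} \boldsymbol{1}\{\rho(\pi)(i) > \rho(\pi)(j)\} = |S| - \sum_{\{i,j\}\in S:\, i<j} \boldsymbol{1}\{\pi(i) > \pi(j)\},
\]
and therefore $g(\rho(\pi)) = (-1)^{|S|} g(\pi)$.

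Combining these two facts, invariance of the uniform measure under $\rho$ gives $\Ex_\pi[g(\pi)] = \Ex_\pi[g(\rho(\pi))] = (-1)^{|S|} \Ex_\pi[g(\pi)]$. When $|S|$ is odd this reads $\Ex_\pi[g(\pi)] = -\Ex_\pi[g(\pi)]$, forcing $\Ex_\pi[g(\pi)] = 0$, and hence $\Ex_{T\sim\mathcal{P}}[T^S] = 0$. There is no real obstacle here; the only point requiring a moment's care is checking that $\rho$ is genuinely a bijection of $S_n$ and that the indicator identity holds for \emph{every} pair (which is immediate since a permutation takes distinct values). One could alternatively phrase $\rho$ as left multiplication by the longest element $w_0 \in S_n$, but the explicit formula above makes the sign computation most transparent.
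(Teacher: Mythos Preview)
Your proof is correct and follows essentially the same approach as the paper: both reduce via Proposition~\ref{prop:fourier-expectation} to the sign-weighted average over permutations, then use the order-reversing involution $\pi \mapsto (i \mapsto n+1-\pi(i))$ to show the sum of the two exponent counts equals $|S|$, forcing cancellation when $|S|$ is odd.
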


\begin{proof}
    For any $\pi \in S_n$, we let $\text{rev}(\pi) \in S_n$ denote the reverse of $\pi$, given by $\text{rev}(\pi)(i) = n+1-\pi(i)$ for all $i\in [n]$.
    In particular, for any pair of $i,j$ such that $\pi(i) < \pi(j)$, we have $\text{rev}(\pi)(i)> \text{rev}(\pi)(j)$ and vice versa.
    Let us also write
    \[ \mathrm{swaps}(\pi) \colonequals \sum_{\{i,j\}\in S: i < j} \boldsymbol{1}\{\pi(i) > \pi(j)\}. \]
    By Proposition \ref{prop:fourier-expectation}, for any $S$ with an odd number of edges, we have
    \begin{align*}
        \mathbb{E}_{\mathcal{P} }[T^S]
        &= (2\gamma)^{|S|} \cdot \mathbb{E}_{\pi \sim \textrm{Unif}(S_n)}\left[(-1)^{\mathrm{swaps}(\pi)} \right] \\
        &= \frac{1}{2}(2\gamma)^{|S|}\cdot \mathbb{E}_{\pi \sim \textrm{Unif}(S_n)}\left[(-1)^{\mathrm{swaps}(\pi)} + (-1)^{\mathrm{swaps}(\mathrm{rev}(\pi))}\right] .
    \end{align*}
    For any fixed $\pi \in S_n$, we observe that
    \[ \mathrm{swaps}(\pi) + \mathrm{swaps}(\mathrm{rev}(\pi)) = |S|.\]
    Since $|S|$ is odd, for every $\pi \in S_n$, one of the quantities above is odd and the other is even. We thus find that $\EE_{\sP}[T^S] = 0$.
\end{proof}

Next, the specific shape of a path of length two will play an important role in the design of our polynomial-time detection algorithm.
We give an explicit calculation of the corresponding expectation.
\begin{proposition}\label{prop:wedge-expectation}
    $\mathbb{E}_{\mathcal{P} }[T_{1,2}T_{1,3}] = \frac{1}{3}(2\gamma)^2$.
\end{proposition}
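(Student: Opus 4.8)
The plan is to apply Proposition~\ref{prop:fourier-expectation} directly to the shape $S = \{\{1,2\}, \{1,3\}\}$, which is the path of length two centered at vertex $1$. With this choice of $S$, the sum $\sum_{\{i,j\} \in S : i < j} \boldsymbol{1}\{\pi(i) > \pi(j)\}$ becomes $\boldsymbol{1}\{\pi(1) > \pi(2)\} + \boldsymbol{1}\{\pi(1) > \pi(3)\}$, so Proposition~\ref{prop:fourier-expectation} reduces the claim to the computation
\[
    \Ex_{\pi \sim \mathrm{Unif}(S_n)}\left[(-1)^{\boldsymbol{1}\{\pi(1) > \pi(2)\} + \boldsymbol{1}\{\pi(1) > \pi(3)\}}\right] = \frac{1}{3}.
\]

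The key observation for this last step is that the integrand depends on $\pi$ only through the relative order of the three values $\pi(1), \pi(2), \pi(3)$, and under $\pi \sim \mathrm{Unif}(S_n)$ this relative order is uniformly distributed over the six permutations of $\{1,2,3\}$. I would then split into the three cases according to the rank of $\pi(1)$ among the three: if $\pi(1)$ is the smallest (probability $\tfrac13$), both indicators vanish and the sign is $+1$; if $\pi(1)$ is the largest (probability $\tfrac13$), both indicators are $1$ and the sign is $(-1)^2 = +1$; and if $\pi(1)$ is in the middle (probability $\tfrac13$), exactly one indicator is $1$ and the sign is $-1$. Averaging gives $\tfrac13(1 + 1 - 1) = \tfrac13$, and multiplying by the prefactor $(2\gamma)^{|S|} = (2\gamma)^2$ from Proposition~\ref{prop:fourier-expectation} yields the claim.

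There is essentially no obstacle here: the only thing to be careful about is confirming that the marginal distribution of the ordering of $(\pi(1),\pi(2),\pi(3))$ is genuinely uniform over the $3! = 6$ patterns (a standard symmetry fact about uniform permutations), and bookkeeping the three cases correctly. One could alternatively verify the identity by noting that $\Ex[(-1)^{\boldsymbol{1}\{\pi(1)>\pi(2)\}}] = 0$ and $\Ex[(-1)^{\boldsymbol{1}\{\pi(1)>\pi(3)\}}] = 0$ individually, so the value $\tfrac13$ measures exactly the (positive) correlation introduced by the shared vertex $1$; but the direct three-case enumeration above is the cleanest route and is what I would write up.
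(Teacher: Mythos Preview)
Your proposal is correct and follows exactly the approach the paper indicates: the paper's proof simply says ``We may use Proposition~\ref{prop:fourier-expectation} to verify this equation,'' and you have carried out precisely that verification, with the three-case enumeration over the rank of $\pi(1)$ among $\{\pi(1),\pi(2),\pi(3)\}$ being the natural way to evaluate the resulting expectation.
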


\begin{proof}[Proof of Proposition \ref{prop:wedge-expectation}]
    We may use Proposition \ref{prop:fourier-expectation} to verify this equation.
\end{proof}

Finally, we will use the following result, a consequence of the Neyman-Pearson Lemma, on controlling error probabilities in detection or hypothesis testing using the total variation distance.
\begin{proposition}
    \label{prop:sum-type1-type2}
    Suppose $\sQ$ and $\sP$ are arbitrary probability measures (not necessarily our null and planted models) on some $\Omega$, such that $\sP$ is absolutely continuous with respect to $\sQ$, and let $f: \Omega \to \{0, 1\}$.
    Then,
    \begin{equation}
        \Px_{T \sim \sQ}[f(T) = 1] + \Px_{T \sim \sP}[f(T) = 0] \geq 1 - d_{\text{TV}}(\mathcal{P}, \mathcal{Q}).
    \end{equation}
\end{proposition}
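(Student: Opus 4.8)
The plan is to reduce the statement to the variational (and density) characterization of total variation distance; there is essentially no real obstacle, and the argument is a short identity plus one inequality. First I would fix the test $f$ and set $A \colonequals f^{-1}(1) = \{\omega \in \Omega : f(\omega) = 1\}$, a measurable subset of $\Omega$. Then the quantity to be bounded rewrites as
\[
    \Px_{T \sim \sQ}[f(T) = 1] + \Px_{T \sim \sP}[f(T) = 0] = \sQ(A) + \sP(\Omega \setminus A) = \sQ(A) + 1 - \sP(A) = 1 - \big(\sP(A) - \sQ(A)\big).
\]

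The key step is then the bound $\sP(A) - \sQ(A) \le d_{\mathrm{TV}}(\sP, \sQ)$. This is immediate from the definition of total variation distance as $d_{\mathrm{TV}}(\sP, \sQ) = \sup_{B} \big(\sP(B) - \sQ(B)\big)$, the supremum ranging over all measurable $B \subseteq \Omega$; taking $B = A$ gives the claim. (Absolute continuity of $\sP$ with respect to $\sQ$ is what guarantees that this supremum equals the more familiar $\tfrac12 \int \big|\tfrac{d\sP}{d\sQ} - 1\big|\, d\sQ$, so the two standard forms of $d_{\mathrm{TV}}$ agree; we only need the variational form, but it is worth recording.) Combining the displayed identity with this inequality yields
\[
    \Px_{T \sim \sQ}[f(T) = 1] + \Px_{T \sim \sP}[f(T) = 0] = 1 - \big(\sP(A) - \sQ(A)\big) \ge 1 - d_{\mathrm{TV}}(\sP, \sQ),
\]
which is exactly the desired inequality.

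If one prefers the Neyman--Pearson framing made explicit (this is where absolute continuity really earns its keep, via the Radon--Nikodym derivative $\tfrac{d\sP}{d\sQ}$), one would alternatively observe that the sum of Type~I and Type~II errors is minimized over all tests by the likelihood-ratio test $f^\star = \boldsymbol{1}\{\tfrac{d\sP}{d\sQ} \ge 1\}$, for which a direct computation gives that the sum of errors equals precisely $1 - d_{\mathrm{TV}}(\sP, \sQ)$; since $f^\star$ is optimal, every $f$ satisfies the inequality. I would, however, present the first (direct) argument in the paper, as it is self-contained and avoids invoking optimality of the likelihood-ratio test. The only point to be careful about is measurability of $A$ (automatic, since $f$ is a function to the discrete set $\{0,1\}$) and the choice of convention for $d_{\mathrm{TV}}$ (we use the sup-over-events normalization, consistent with the statement).
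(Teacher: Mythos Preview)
Your proof is correct. Interestingly, the paper takes the route you describe as the \emph{alternative}: it invokes the Neyman--Pearson Lemma to say that the likelihood-ratio tests $f_\eta(T) = \boldsymbol{1}\{\tfrac{d\sP}{d\sQ}(T) \ge \eta\}$ minimize the sum of errors, then observes that the minimum over $\eta$ is attained at $\eta = 1$ and equals $1 - d_{\mathrm{TV}}(\sP,\sQ)$. Your primary argument is more self-contained: you bypass the optimality statement entirely and get the inequality directly from the variational characterization $d_{\mathrm{TV}}(\sP,\sQ) = \sup_B(\sP(B) - \sQ(B))$ applied to $B = f^{-1}(1)$. This buys you a one-line proof with no external citation, at the (negligible) cost of not identifying the optimal test; the paper's version makes the connection to likelihood-ratio testing explicit but leans on a reference. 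Either is perfectly adequate here.
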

\begin{proof}
    We may view $f(T)$ as a hypothesis test between $\sQ$ and $\sP$, which outputs 0 to indicate $\sQ$ and 1 to indicate $\sP$.
    Thus the expression on the left-hand side above is the sum of Type~I and Type~II error probabilities of this hypothesis test.
    By the Neyman-Pearson Lemma \cite[Lemma 5.3]{Rigollet-2017-HighDimensionalStatistics}, the test $f_{\eta}(T) \colonequals \boldsymbol{1}\{\frac{d\sP}{d\sQ}(T) \geq \eta\}$ minimizes the left-hand side for some choice of $\eta$.
    The minimum of the left-hand side evaluated on $f_{\eta}$ over all $\eta \in \RR$ is achieved at $\eta = 1$, and for this value the left-hand side equals $1 - d_{\text{TV}}(\mathcal{P}, \mathcal{Q})$.
\end{proof}

\subsection{Information-Theoretic Impossibility of Detection}
\label{sec:detection:lower}

We now proceed to our proof of the negative results included in Theorem~\ref{thm:detection-th}.

\begin{proof}[Proof of Parts 2 and 4 of Theorem~\ref{thm:detection-th}]
    We will establish an upper bound on the $\chi^2$-divergence between $\mathcal{P}$ and $\mathcal{Q}$ and use it to establish the impossibility of strong and weak detection.
    Let us review what conditions on the $\chi^2$-divergence would imply such impossibility results.

    When the $\chi^2$-divergence is bounded as $n \to \infty$, then $\sP$ is \emph{contiguous} to $\sQ$, whereby strong detection is impossible.
    See, e.g., \cite[Lemma 1.13]{kunisky2019notes} for details; this argument is sometimes called ``Le Cam's second moment method.''
    When the $\chi^2$-divergence is $o(1)$, using that the total variation distance is bounded as
    \[ d_{\text{TV}}(\mathcal{P}, \mathcal{Q}) \le \sqrt{\log(\chi^2(\mathcal{P} \dbar \mathcal{Q}) + 1)},\]
    we may show that weak detection is impossible.
    By Proposition~\ref{prop:sum-type1-type2}, the minimum sum of Type~I and Type~II error probabilities achievable by any algorithm for distinguishing $\mathcal{P}$ from $\mathcal{Q}$ is $1 - d_{\text{TV}}(\mathcal{P}, \mathcal{Q})$.

    Let us write $p$ and $q$ for the probability mass functions of $\sP$ and $\sQ$, respectively.
    The $\chi^2$-divergence between $\mathcal{P}$ and $\mathcal{Q}$, defined as
    \begin{align}
        \chi^2(\mathcal{P} \dbar \mathcal{Q}) = \sum_{T} \frac{p(T)^2}{q(T)} - 1,
    \end{align}
    where the summation is over all $n \times n$ tournament matrices $T$, can then be expressed as
    \begin{align*}
        \chi^2(\mathcal{P} \dbar \mathcal{Q}) &= \sum_{T} \frac{ p(T)^2}{q(T)} - 1\\
        &= \Ex_{T \sim \mathcal{Q}} \left(\frac{p(T)}{ q(T)}\right)^2 - 1
        \intertext{and, taking the Boolean Fourier transform,}
        &= \Ex_{T \sim \mathcal{Q}} \left(\sum_{S \subseteq \binom{[n]}{2}}\Ex_{T \sim \mathcal{Q}}\left[\frac{p(T)}{q(T)}\cdot T^S\right]\cdot T^S\right)^2 - 1\\
        &=  \sum_{S_1, S_2 \subseteq \binom{[n]}{2}}\mathbb{E}_{\mathcal{P}}\left[ T^{S_1}\right]\mathbb{E}_{\mathcal{P}}\left[ T^{S_2}\right]\cdot \mathbb{E}_{\mathcal{Q}}[T^{S_1} T^{S_2}] - 1\\
        &= \sum_{S \subseteq \binom{[n]}{2}} \left(\mathbb{E}_{\mathcal{P} }[T^S]\right)^2 - 1.
    \end{align*}

    By Proposition \ref{prop:component-independence} and Proposition \ref{prop:odd-cancellation}, the only terms that contribute to the sum above correspond to shapes $S$ that have an even number of edges in each connected component. Thus,
    {\allowdisplaybreaks
    \begin{align*}
        \chi^2(\mathcal{P} \dbar \mathcal{Q}) + 1 &= \sum_{S \subseteq \binom{[n]}{2} } \left(\mathbb{E}_{\mathcal{P} }[T^S]\right)^2 \\
        &= 1+\sum_{v=2}^n \sum_{\ell=1}^{\binom{n}{2}} \sum_{\substack{S \subseteq \binom{[n]}{2}\\ |V(S)|=v, |S|=\ell }} \left(\mathbb{E}_{\mathcal{P} }[T^S]\right)^2
        \intertext{where we separate out the term $S = \emptyset$ that contributes $(\mathbb{E}_{\mathcal{P}}[1])^2 = 1$,}
        &= 1+\sum_{\ell=2}^{\binom{n}{2}} \sum_{v = 2}^n  \sum_{\substack{S \subseteq \binom{[n]}{2}\\ |V(S)|=v, |S|=\ell\\
        \text{each connected component of } S\\ \text{ has an even number of edges} }} \left(\mathbb{E}_{\mathcal{P} }[T^S]\right)^2 \\
        &\le  1+\sum_{\ell=2}^{\binom{n}{2}} \sum_{2 \le v \le \min\{\frac{3}{2} \ell, n\} } \sum_{\substack{S \subseteq \binom{[n]}{2}\\ |V(S)|=v, |S|=\ell\\
        \text{each connected component of } S\\ \text{ has an even number of edges} }} (2\gamma)^{2\ell}
        \intertext{by Proposition \ref{prop:edge-decay}, and using that $|V(S)| \le |S| + \cc(S) \le \frac{3}{2}|S|$, where $\cc(S)$ denotes the number of connected components of $S$, and since each component has at least $2$ edges, $\cc(S) \le \frac{1}{2}|S|$,}
        &\le 1+\sum_{\ell=2}^{\binom{n}{2}} \sum_{2 \le v \le \min\{\frac{3}{2}\ell, n\} } \binom{n}{v} \binom{\binom{v}{2} }{\ell} (2\gamma)^{2\ell}\\
        &\le 1+\sum_{\ell=2}^{\binom{n}{2}} \sum_{2 \le v \le \min\{\frac{3}{2}\ell, n\}} \frac{n^v}{v!} \frac{\left(\frac{v^2}{2}\right)^\ell}{\ell!} (2\gamma)^{2\ell}\\
        &\le 1+\sum_{\ell=2}^{\binom{n}{2}} \frac{(4\gamma^2)^\ell}{2^\ell \ell!} \sum_{2 \le v \le \min\{\frac{3}{2}\ell, n\}} \frac{n^v v^{2\ell}}{v!}\\
        &\le 1+\sum_{\ell=2}^{\binom{n}{2}} \frac{(4\gamma^2)^\ell}{2^\ell \ell!} \sum_{2 \le v \le \min\{\frac{3}{2}\ell, n\}} \frac{e^v n^v v^{2\ell}}{v^v}\\
        \intertext{we notice that $\frac{d}{dv}\frac{e^v n^v v^{2\ell}}{v^v} = \frac{e^v n^v v^{2\ell}(\log n - \log v + 2\frac{\ell}{v})}{v^{v}} \ge 0$ for $1 \le v \le n$, so we can bound the inner sum by the number of terms times the maximum term, which is attained at $v = \min\{\frac{3}{2}\ell, n\}$,}
        &\le 1+\sum_{\ell=2}^{\binom{n}{2}} \frac{(4\gamma^2)^\ell}{2^\ell \ell!}\cdot z \frac{e^z n^z z^{2\ell}}{z^z}
        \intertext{where $z = \min\{\frac{3}{2}\ell, n\}$,}
        &\le 1+\sum_{\ell = 2}^{\frac{2}{3}n} \frac{(4\gamma^2)^\ell}{2^\ell \ell!}\cdot \frac{3}{2}\ell \frac{e^{\frac{3}{2}\ell } n^{\frac{3}{2}\ell } (\frac{3}{2}\ell)^{2\ell}}{(\frac{3}{2}\ell)^{\frac{3}{2}\ell}} + \sum_{l > \frac{2}{3}n} \frac{(4\gamma^2)^\ell}{2^\ell \ell!}\cdot n \frac{e^{n } n^{n } n^{2\ell}}{n^{n}}\\
        &\le 1+\sum_{\ell} \frac{3}{2}\ell \left(\frac{4\gamma^2 e^{\frac{5}{2}} n^{\frac{3}{2}} \cdot\sqrt{\frac{3}{2}} }{2\sqrt{\ell}}\right)^{\ell} + \sum_{\ell > \frac{2}{3}n} ne^n \left(\frac{4\gamma^2 e n^2}{2 \ell}\right)^{\ell}.
    \end{align*}
    }

    Observe that the sum above is bounded whenever $\gamma = O(n^{-3/4})$, and the sum is $1 + o(1)$ when $\gamma = o(n^{-3/4})$. As a result, if $\gamma = O(n^{-3/4})$, $\chi^2(\mathcal{P} \dbar \mathcal{Q}) = O(1)$ and thus strong detection is impossible. If $\gamma = o(n^{-3/4})$, then we have $\chi^2(\mathcal{P} \dbar \mathcal{Q}) = o(1)$ and thus weak detection is impossible.
\end{proof}

\subsection{Low-Degree Detection Algorithm}
\label{sec:detection:upper}

We now prove the positive results about the existence of polynomial-time detection algorithms included in Theorem~\ref{thm:detection-th}.
The proofs are motivated by the mechanics of the proofs of negative results above: we build a polynomial to use for detection by choosing a subset of Boolean Fourier modes that contribute a large amount to the $\chi^2$-divergence in the computation above when it diverges.

\begin{proof}[Proof of Parts 1 and 3 of Theorem~\ref{thm:detection-th}]
    Per the above motivation, we will consider the following degree $2$ polynomial in the input tournament matrix $T$:
    \begin{equation}
        f(T) = \sum_{i=1}^n \sum_{\substack{j,k \in [n] \setminus \{i\}\\j < k }} T_{i,j} T_{i,k}.
    \end{equation}
    Note that $f(T)$ is the sum of all Boolean Fourier modes whose shape is the path of length 2 (with any vertex labelling).
    We will show that there is a threshold $C$ such that $f(T) \leq C$ w.h.p.\ when $T \sim \sQ$, while $f(T) \geq C$ w.h.p.\ when $T \sim \sP$.
    To do this, we will compute the first two moments of $f(T)$ under each distribution and use Chebyshev's inequality.

    First, let us give some intuition about why $f(T)$ is a reasonable test statistic.
    For one interpretation, one may arrive at $f(T)$ by noticing that, when the $\chi^2$-divergence in our previous calculations diverges, then the summands of this shape make a divergent contribution.
    For another, more intuitive interpretation, we may rewrite
    \begin{equation}
        f(T) = K + \frac{1}{2}\sum_{i = 1}^n \left(\sum_{j \in [n] \setminus \{i\}} T_{i, j}\right)^2
    \end{equation}
    for some constant $K = K_n$.
    Therefore, up to translation and rescaling, $f(T)$ is the sample variance of the numbers of wins of various tournament vertices, or of the scores $s_i = \sum_{j \in [n] \setminus \{i\}} T_{i, j}$ from the definition of the Ranking By Wins algorithm.
    In other words, $f(T)$ will be larger when the distribution of win counts is more spread out, which we expect to occur under the planted model with sufficiently strong signal.

    Returning to our proof strategy, we now compute the first two moments of $f(T)$ under $\sQ$ and $\sP$.
    Under the null distribution $\mathcal{Q}$, we have
    \begin{align*}
        \mathbb{E}_{\mathcal{Q} }[f(T)] &= \sum_{i=1}^n \sum_{\substack{j,k \in [n] \setminus \{i\}\\j < k }} \mathbb{E}_{\mathcal{Q} }\left[T_{i,j} T_{i,k}\right]\\
        &= 0,\\
        \mathbb{E}_{\mathcal{Q} }[f(T)^2] &= \sum_{i_1, i_2 \in [n]} \sum_{\substack{j_1, k_1\in [n] \setminus \{i_1\}\\j_1 < k_1 } } \sum_{\substack{j_2, k_2\in [n] \setminus \{i_2\}\\j_2 < k_2 } }\mathbb{E}_{\mathcal{Q} }\left[T_{i_1, j_1} T_{i_1, k_1} T_{i_2, j_2} T_{i_2, k_2}\right]\\
        &= \sum_{i_1, i_2 \in [n]} \sum_{\substack{j_1, k_1\in [n] \setminus \{i_1\}\\j_1 < k_1 } } \sum_{\substack{j_2, k_2\in [n] \setminus \{i_2\}\\j_2 < k_2 }} \boldsymbol{1}\{i_1 = i_2, j_1 = j_2, k_1 = k_2\}\\
        &= \frac{n(n-1)(n-2)}{2}.
    \end{align*}
    Under the planted distribution $\mathcal{P}$, we have
    {\allowdisplaybreaks
    \begin{align*}
        &\mathbb{E}_{\mathcal{P} }[f(T)]\\
        &= \sum_{i=1}^n \sum_{j,k \in [n] \setminus\{i\} } \mathbb{E}_{\mathcal{P} }[T_{i,j}T_{i,k}]\\
        &= \frac{n(n-1)(n-2)}{2} \mathbb{E}_{\mathcal{P} }[T_{12}T_{13}]\\
    &= \binom{n}{3} \cdot (2\gamma)^2,
    \intertext{by Proposition \ref{prop:wedge-expectation}, and}
    &\Var_{\mathcal{P} }[f(T)]\\
    &= \mathbb{E}_{\mathcal{P} }[f(T)^2] - \mathbb{E}_{\mathcal{P} }[f(T)]^2\\
    &= \sum_{\substack{i_1, i_2 \in [n]\\
    j_1, k_1 \in [n] \setminus \{i_1\}, j_1 < k_1\\
    j_2, k_2 \in [n] \setminus \{i_2\}, j_2 < k_2 }} \Big(\mathbb{E}_{\mathcal{P} }[T_{i_1j_1}T_{i_1k_1} T_{i_2j_2}T_{i_2k_2}] -\mathbb{E}_{\mathcal{P} }[T_{i_1j_1}T_{i_1k_1}] \mathbb{E}_{\mathcal{P} }[T_{i_2j_2}T_{i_2k_2}]\Big)\\
    &\le \sum_{\substack{i_1, i_2 \in [n]\\
    j_1, k_1 \in [n] \setminus \{i_1\}, j_1 < k_1\\
    j_2, k_2 \in [n] \setminus \{i_2\}, j_2 < k_2\\
    \{i_1, j_1, k_1\} \cap \{i_2, j_2, k_2\} \ne \emptyset }} \mathbb{E}_{\mathcal{P} }[T_{i_1j_1}T_{i_1k_1} T_{i_2j_2}T_{i_2k_2}]
    \intertext{by Proposition \ref{prop:component-independence} and Proposition \ref{prop:wedge-expectation},}
    &\le \sum_{\substack{i_1, i_2 \in [n]\\
    j_1, k_1 \in [n] \setminus \{i_1\}, j_1 < k_1\\
    j_2, k_2 \in [n] \setminus \{i_2\}, j_2 < k_2\\
    |\{i_1, j_1, k_1\} \cap \{i_2, j_2, k_2\}| = 3  }} \mathbb{E}_{\mathcal{P} }[T_{i_1j_1}T_{i_1k_1} T_{i_2j_2}T_{i_2k_2}] + \sum_{\substack{i_1, i_2 \in [n]\\
    j_1, k_1 \in [n] \setminus \{i_1\}, j_1 < k_1\\
    j_2, k_2 \in [n] \setminus \{i_2\}, j_2 < k_2\\
    |\{i_1, j_1, k_1\} \cap \{i_2, j_2, k_2\}| = 2  }} \mathbb{E}_{\mathcal{P} }[T_{i_1j_1}T_{i_1k_1} T_{i_2j_2}T_{i_2k_2}]\\
    &+ \sum_{\substack{i_1, i_2 \in [n]\\
    j_1, k_1 \in [n] \setminus \{i_1\}, j_1 < k_1\\
    j_2, k_2 \in [n] \setminus \{i_2\}, j_2 < k_2\\
    |\{i_1, j_1, k_1\} \cap \{i_2, j_2, k_2\}| = 1  }} \mathbb{E}_{\mathcal{P} }[T_{i_1j_1}T_{i_1k_1} T_{i_2j_2}T_{i_2k_2}]\\
    &\le O(n^3 + n^4 \gamma^2 + n^5 \gamma^4),
    \end{align*}
    by Proposition \ref{prop:edge-decay}, where we use that there are $O(n^{6-t})$ pairs of $(\{i_1, j_1, k_1\}, \{i_2, j_2, k_2\})$ with $t$ shared vertices, and that if $|\{i_1, j_1, k_1\} \cap \{i_2, j_2, k_2\}| = t$, then the symmetric difference of $\{\{i_1, j_1\}, \{i_1, k_1\}\}$ and $\{\{i_2, j_2\}, \{i_2, k_2\}\}$ has at least $6 - 2t$ edges for $t \in \{1, 2, 3\}$.
    }

    By Chebyshev's inequality, whenever
    \begin{align*}
        \max\{\sqrt{\Var_{\mathcal{P} }[f(T)]}, \sqrt{\Var_{\mathcal{Q}}[f(T)]} \} = o\left(\mathbb{E}_{\mathcal{P} }[f(T)] - \mathbb{E}_{\mathcal{Q} }[f(T)] \right),
    \end{align*}
    then thresholding the value of $f(T)$ achieves strong detection between $\mathcal{P}$ and $\mathcal{Q}$.
    Moreover, if
    \begin{align*}
        (2\sqrt{2} + \varepsilon)\cdot \max\{\sqrt{\Var_{\mathcal{P} }[f(T)]}, \sqrt{\Var_{\mathcal{Q}}[f(T)]} \} \le \mathbb{E}_{\mathcal{P} }[f(T)] - \mathbb{E}_{\mathcal{Q} }[f(T)]
    \end{align*}
    for some constant $\varepsilon > 0$, then thresholding the value of $f(T)$ achieves weak detection between $\mathcal{P}$ and $\mathcal{Q}$.

    Plugging in the mean and variance bounds above, we get that strong detection is possible when $\gamma = \omega(n^{-3/4})$ and that for some constant $c > 0$, weak detection is possible when $\gamma \ge c \cdot n^{-\frac{3}{4}}$.
\end{proof}

\subsection{Suboptimality of Spectral Detection Algorithm}
\label{sec:spectral}

Finally, we consider the performance of a natural spectral algorithm for detection.
By ``spectral algorithm'' we mean one that takes $T$ as input, viewed as a tournament matrix, and outputs some data involving the top eigenvalues or singular values of a matrix constructed from $T$.

Since $T$ is skew-symmetric, it is \emph{a priori} unclear how best to design a spectral algorithm.
As stated in Theorem~\ref{thm:spectral}, we consider the following natural choice.
We will use bold $\eye$ when referring to the imaginary unit.
Instead of working with $T$, we work with $\eye T$, which is a Hermitian matrix and thus has real eigenvalues.
Those real eigenvalues are related to the singular values of $T$: the singular values of $T$ are the absolute values of the eigenvalues of $\eye T$.
They are also related to the eigenvalues of $T$: those eigenvalues were purely imaginary and came in conjugate pairs, and if $\eye \lambda$ and $-\eye \lambda$ are eigenvalues of $T$ then $\lambda$ and $-\lambda$ will be eigenvalues of $\eye T$.

We will consider a detection algorithm that computes and thresholds $\lambda_{\max}(\eye T)$ (which is the same as the spectral radius or operator norm of either $T$ or $\eye T$ by the above observation on the symmetry of the eigenvalues).
To carry out this analysis, we must understand this largest eigenvalue when $T \sim \sQ$ (Part 1 of Theorem~\ref{thm:spectral}) and when $T \sim \sP$ (Parts 2 and 3 of Theorem~\ref{thm:spectral}).

The former case is straightforward, since then $T$ is just a Hermitian Wigner matrix of i.i.d.\ (albeit complex) entries.
\begin{proof}[Proof of Part 1 of Theorem~\ref{thm:spectral}]
Recall that we want to show that, when $T \sim \sQ$, then
    \begin{equation}
        \frac{1}{\sqrt{n}} \lambda_{\max}(\eye T) \,\,\to\,\, 2,
    \end{equation}
    where the convergence is in probability.
    The entries of $\eye T$ above the diagonal are i.i.d.\ centered complex random variables whose modulus is always equal to 1.
    The result then follows by standard analysis of Wigner matrices; see, e.g., Sections 2.1.6 and 2.2 of \cite{AGZ-2010-RandomMatrices}.
\end{proof}

The latter case $T \sim \sP$ is more complicated.
Morally, it is similar to a \emph{spiked matrix model}, a low-rank additive perturbation of a Wigner matrix.
We will eventually appeal to the analysis of \cite{CDMF-2009-DeformedWigner} of general such models for perturbations of constant rank.
Yet, we will see that in this case the perturbation is actually not quite low-rank but rather has full rank with rapidly decaying eigenvalues, which must be treated more carefully.

In particular, we will decompose $\eye T = (\eye T - \EE \eye T) + \EE \eye T$, where we will see that the first term is a Wigner matrix, while the second term will be the additive perturbation whose effect we will understand by viewing this expression as a spiked matrix model.
We now compute this second term along with its eigenvalues.
Note without loss of generality that we may restrict all of our discussion to the case where the hidden permutation $\pi$ is the identity; we will condition on this event without further discussion going forward.
To be explicit about this, we will replace $\sP$ with $\sP_{\mathrm{id}}$ when we mention it.

Let $A \in \mathbb{C}^{n \times n}$ be defined as
    \begin{align}
        A_{i,j} = \begin{cases}
            \eye & \quad \text{ if } i < j, \\
            -\eye & \quad \text{ if } i > j, \\
            0 & \quad \text{ if } i = j.
        \end{cases}
    \end{align}
In words, $A$ is a Hermitian matrix with $\eye$ in the upper diagonal entries, $-\eye$ in the lower diagonal entries, and $0$ on the diagonal.
\begin{proposition}
    $\EE_{T \sim \sP_{\mathrm{id}}}[\eye T] = 2 \gamma A$.
\end{proposition}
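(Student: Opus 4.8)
The plan is to compute the entrywise expectation $\EE_{T \sim \sP_{\mathrm{id}}}[(\eye T)_{i,j}] = \eye \cdot \EE_{T \sim \sP_{\mathrm{id}}}[T_{i,j}]$ for each pair $i \neq j$, and then observe that the resulting matrix is exactly $2\gamma A$. Since we have conditioned on the hidden permutation being the identity, for $i < j$ we have $i <_{\pi} j$, so $T_{i,j}$ is a skewed Rademacher variable taking value $+1$ with probability $\frac{1}{2} + \gamma$ and $-1$ with probability $\frac{1}{2} - \gamma$; hence $\EE_{T \sim \sP_{\mathrm{id}}}[T_{i,j}] = (\frac{1}{2} + \gamma) - (\frac{1}{2} - \gamma) = 2\gamma$. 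For $i > j$ we use skew-symmetry $T_{i,j} = -T_{j,i}$, so $\EE_{T \sim \sP_{\mathrm{id}}}[T_{i,j}] = -2\gamma$, and the diagonal entries are zero.

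Multiplying by $\eye$, we get $\EE_{T \sim \sP_{\mathrm{id}}}[(\eye T)_{i,j}] = 2\gamma \eye$ for $i < j$, $-2\gamma\eye$ for $i > j$, and $0$ for $i = j$. Comparing with the definition of $A$, whose $(i,j)$ entry is $\eye$ for $i < j$, $-\eye$ for $i > j$, and $0$ for $i = j$, we see that $\EE_{T \sim \sP_{\mathrm{id}}}[\eye T] = 2\gamma A$ entrywise, which is the claimed matrix identity.

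There is essentially no obstacle here — this is a one-line computation using linearity of expectation entrywise, the definition of the planted model (equivalently, of the skewed Rademacher distribution), and the skew-symmetry of $T$. The only point that warrants a sentence of care is the use of the conditioning on $\pi = \mathrm{id}$ (recorded as $\sP_{\mathrm{id}}$), which is what lets us say unambiguously that $i < j$ implies $i <_{\pi} j$ and hence that $T_{i,j}$ has positive bias; without this reduction the expectation would instead be $2\gamma$ times the sign of $\pi(j) - \pi(i)$, giving a permuted version of $A$. Since the reduction to $\pi = \mathrm{id}$ was already justified in the surrounding text (by relabeling vertices, which does not change $\lambda_{\max}(\eye T)$), I would simply invoke it.
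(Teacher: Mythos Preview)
Your proof is correct and is exactly the intended direct computation; the paper states this proposition without proof because it is immediate from the definition of the planted model and of $A$, and your entrywise calculation is precisely what is being left implicit.
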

\noindent
Next let us understand the spectrum of $A$.
    \begin{proposition}
        \label{prop:A-evecs}
        The eigenvalues of $A$ are given by
        \begin{align*}
            \lambda_i(A) = \frac{1}{\tan\left(\frac{2i-1}{2n}\pi\right)} \,\,\text{ for }\,\, 1\le i\le n
        \end{align*}
        with the corresponding eigenvectors $v_i \in \mathbb{C}^n$ whose entries are given by
        \begin{align*}
            (v_i)_j = \exp\left(-\eye \pi \frac{(2i-1)j}{n}\right) \,\,\text{ for }\,\, 1\le j\le n.
        \end{align*}
    \end{proposition}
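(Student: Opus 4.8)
The plan is to verify the claimed eigenpairs by direct computation — essentially plugging in the Fourier-type ansatz $(v_i)_j = \zeta_i^j$ — and then argue these account for the whole spectrum. Write $\zeta_i \colonequals \exp(-\eye\pi(2i-1)/n)$, so $(v_i)_j = \zeta_i^j$. The one arithmetic fact that drives everything is that $\zeta_i^n = \exp(-\eye\pi(2i-1)) = -1$, since $2i-1$ is odd; hence $\zeta_i^{n+1} = -\zeta_i$. Now, for each $j \in [n]$,
\[
(Av_i)_j = \eye\sum_{k=j+1}^n \zeta_i^k \;-\; \eye\sum_{k=1}^{j-1}\zeta_i^k,
\]
and both geometric sums have closed forms (valid since $\zeta_i \neq 1$). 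Using $\zeta_i^{n+1} = -\zeta_i$ to collapse the first sum and then combining, the two contributions cancel down to $-\eye\,\zeta_i^j(\zeta_i+1)/(\zeta_i-1)$, independent of $j$. Thus $v_i$ is an eigenvector of $A$ with eigenvalue $\mu_i \colonequals -\eye\,(\zeta_i+1)/(\zeta_i-1)$.

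It then remains to put $\mu_i$ into the stated trigonometric form. Setting $\theta_i \colonequals \pi(2i-1)/n$ and multiplying the numerator and denominator of $(\zeta_i+1)/(\zeta_i-1) = (e^{-\eye\theta_i}+1)/(e^{-\eye\theta_i}-1)$ by $e^{\eye\theta_i/2}$ gives $\tfrac{2\cos(\theta_i/2)}{-2\eye\sin(\theta_i/2)} = \eye\cot(\theta_i/2)$, so $\mu_i = -\eye\cdot\eye\cot(\theta_i/2) = \cot\!\big(\tfrac{(2i-1)\pi}{2n}\big) = 1/\tan\!\big(\tfrac{2i-1}{2n}\pi\big)$, matching the claim. (As a sanity check, $\mu_i$ is real, as it must be since $A$ is Hermitian.)

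Finally, completeness: the $n$ values $\tfrac{(2i-1)\pi}{2n}$ for $i=1,\dots,n$ all lie in $(0,\pi)$ and are distinct, and $\cot$ is strictly decreasing on $(0,\pi)$, so the $\mu_i$ are pairwise distinct; an $n\times n$ matrix has at most $n$ eigenvalues, so these are all of them and the $v_i$ form a complete eigenbasis. (Equivalently, the $v_i$ are independent because $[\zeta_i^j]_{i,j}$ is a Vandermonde matrix in the distinct nonzero nodes $\zeta_i$.)

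I do not expect a genuine obstacle here — the statement is proved by a short explicit calculation. The only two places that need care are (i) using $\zeta_i^n = -1$ rather than $+1$, which is precisely the parity feature responsible for cotangents of odd multiples of $\pi/(2n)$ appearing, and (ii) the half-angle manipulation that both simplifies $\mu_i$ to the stated closed form and confirms its reality.
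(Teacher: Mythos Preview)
Your proof is correct and follows essentially the same approach as the paper: both verify the eigenpairs by direct computation of $(Av_i)_j$ via geometric sums, use the key fact $\zeta_i^n=-1$ (the paper does this implicitly when shifting the summation index by $n$), reduce to $\cot(\theta_i/2)$ via a half-angle manipulation, and conclude completeness from distinctness of the eigenvalues. Your presentation is slightly more streamlined through the $\zeta_i$ notation, but the argument is the same.
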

    \begin{proof}
        We explicitly multiply the putative eigenvectors by $A$ to verify the claim:
        {\allowdisplaybreaks
        \begin{align*}
            (Av_i)_j
            &= \sum_{t=1}^n A_{j,t} (v_i)_t\\
            &= -\sum_{t=1}^{j-1} \eye \cdot \exp\left(-\eye \pi \frac{(2i-1)t}{n}\right)  + \sum_{t=j+1}^n \eye \cdot \exp\left(-\eye \pi \frac{(2i-1)t}{n}\right) \\
            &= \eye \left(-\sum_{t=1}^{j-1} \exp\left(-\eye \pi \frac{(2i-1)t}{n}\right)  + \sum_{t=j+1}^n \exp\left(-\eye \pi \frac{(2i-1)t}{n}\right)\right)\\
            &= \eye\left(\sum_{t=1}^{j-1} \exp\left(-\eye \pi \frac{(2i-1)t}{n} - \eye\pi (2i -  1)\right)  + \sum_{t=j+1}^n \exp\left(-\eye \pi \frac{(2i-1)t}{n}\right)\right)\\
            &= \eye\sum_{t= j+1}^{n+j-1} \exp\left(-\eye \pi \frac{(2i-1)t}{n}\right) \\
            &= \eye\cdot \exp\left(-\eye \pi \frac{(2i-1)j}{n}\right) \cdot \sum_{t = 1}^{n-1} \exp\left(-\eye \pi \frac{(2i-1)t}{n}\right)\\
            &= \eye\cdot \exp\left(-\eye \pi \frac{(2i-1)j}{n}\right) \cdot \exp\left(-\eye \pi \frac{2i-1}{n}\right) \cdot \frac{1 - \exp\left(-\eye \pi \frac{(2i-1)(n - 1)}{n}\right)}{1 - \exp\left(-\eye \pi \frac{2i - 1}{n}\right)}\\
            &= \exp\left(-\eye \pi \frac{(2i-1)j}{n}\right) \cdot \eye \cdot \frac{1 + \exp\left(-\eye \pi \frac{2i-1}{n}\right)}{1 - \exp\left(-\eye \pi \frac{2i-1}{n}\right)}\\
            &= \exp\left(-\eye \pi \frac{(2i-1)j}{n}\right) \cdot \eye \cdot \frac{\exp\left(\eye \pi \frac{2i-1}{2n}\right) + \exp\left(-\eye \pi \frac{2i-1}{2n}\right)}{\exp\left(\eye \pi \frac{2i-1}{2n}\right) - \exp\left(-\eye \pi \frac{2i-1}{2n}\right)}\\
            &= \exp\left(-\eye \pi \frac{(2i-1)j}{n}\right) \cdot \frac{1}{\tan\left(\frac{2i-1}{2n}\pi\right)}\\
            &= \frac{1}{\tan\left(\frac{2i-1}{2n}\pi\right)} \cdot (v_i)_j,
        \end{align*}
        so $v_i$ is an eigenvector corresponding to eigenvalue $\lambda_i = 1/\tan((2i-1)\pi/(2n))$. Moreover, since the eigenvalues $\lambda_i$ are distinct, the eigenvectors $v_i$ form an orthogonal basis of $\CC^n$, completing the proof.
        }
    \end{proof}

We may now give the analysis of the spectral algorithm on the planted model.
The interpretation of this result is that, for $\gamma = c \cdot n^{-1/2}$, for $c$ large enough there is at least one outlier eigenvalue greater than the typical largest eigenvalue under the null model, while for $c$ smaller the largest eigenvalue is the same as that of the null model.

\begin{proof}[Proof of Parts 2 and 3 of Theorem~\ref{thm:spectral}]
    Recall the statement of these results: when $T \sim \sP$ and $\gamma = c \cdot n^{-1/2}$ for a constant $c > 0$, we want to show that:
    \begin{enumerate}
        \item If $c \leq \pi / 4$, then
        \begin{equation}
            \frac{1}{\sqrt{n}} \lambda_{\max}(\eye T) \to 2
        \end{equation}
        in probability.
        \item If $c > \pi / 4$, then
        \begin{equation}
            \frac{1}{\sqrt{n}} \lambda_{\max}(\eye T) \geq 2 + f(c)
        \end{equation}
        for some $f(c) > 0$ with high probability.
    \end{enumerate}
    We may also assume without loss of generality that $T \sim \sP_{\mathrm{id}}$ rather than $T \sim \sP$.
    Let us write
    \begin{equation}
        \eye T = \EE \eye T + (\eye T - \EE \eye T) = 2\gamma A + (\eye T - \EE \eye T).
    \end{equation}
    We have that $W \colonequals \eye T - \EE \eye T$ is a Hermitian Wigner matrix whose entries above the diagonal are i.i.d.\ with the law of $\eye (X - 2\gamma) = \eye (X - \EE X)$ for $X \sim \mathrm{Rad}(\frac{1}{2} + \gamma)$.
    In particular, these entries are bounded, centered, and have complex variance $\EE|\eye (X - 2\gamma)|^2 = \Var(X) = 1 - 4\gamma^2 = 1 - O(1 / n)$ with our scaling of $\gamma$.

    We emphasize a small nuance: that $\eye T - \EE \eye T$ is exactly a Wigner matrix depends on our having assumed that the hidden permutation $\pi$ is the identity, since otherwise the entries above the diagonal would not be identically distributed.
    Yet, since conjugating a matrix by a permutation does not change the spectrum, the assumption that $\pi$ is any fixed permutation is without loss of generality; the choice of the identity permutation is just uniquely amenable to fitting into the existing theory around Wigner random matrices and spiked matrix models.

    Fix a large $k \in \NN$ not depending on $n$, to be chosen later.
    Let $\hat{v}_i \colonequals v_i / \|v_i\|$ for $v_i$ the eigenvectors of $A$ given in Proposition~\ref{prop:A-evecs}, so that $A = \sum_{i = 1}^n \lambda_i \hat{v}_i\hat{v}_i^{*}$.
    Let us write $A = A_1 + A_2$, where
    \begin{align}
        A_1 &\colonequals \sum_{i = 1}^k \lambda_i \hat{v}_i\hat{v}_i^{*} + \sum_{i = n - k + 1}^n \lambda_i \hat{v}_i \hat{v_i}^*, \\
        A_2 &\colonequals \sum_{i = k + 1}^{n - k} \lambda_i \hat{v}_i \hat{v_i}^*.
    \end{align}
    We have, since $\tan(x) \geq x / 2$ for sufficiently small $x$, that, for sufficiently large $n$,
    \begin{equation}
        \|A_2\| = \lambda_{k + 1} = \frac{1}{\tan(\frac{2k + 1}{2n} \pi)} \leq \frac{4n}{(2k + 1)\pi} = O\left(\frac{n}{k}\right).
    \end{equation}
    Also, the eigenvalues of $A_1$ satisfy, as $n \to \infty$, the convergences
    \begin{align}
        \frac{\lambda_a}{n} &= \frac{1}{n \tan(\frac{2a - 1}{2n}\pi)} \,\,\to\,\, \frac{2}{(2a - 1)\pi}, \\
        \frac{\lambda_{n - a}}{n} &= -\frac{\lambda_a}{n} \,\,\to\,\, -\frac{2}{(2a - 1)\pi}
    \end{align}
    for any $a$ fixed as $n \to \infty$.
    Thus we may define
    \begin{equation}
        A_0 \colonequals \sum_{i = 1}^k \frac{2n}{(2i - 1)\pi} \hat{v}_i\hat{v}_i^{*} - \sum_{i = 1}^k \frac{2n}{(2i - 1)\pi} \hat{v}_{n - i + 1} \hat{v}_{n - i + 1}^*,
    \end{equation}
    and this will satisfy $\|A_1 - A_0\| = o(n)$ as $n \to \infty$.

    We have
    \begin{equation}
        \label{eq:iT-decomp}
        \frac{1}{\sqrt{n}}\eye T = \left(\frac{2c}{n} A_0 + \frac{1}{\sqrt{n}}W\right) + \frac{2c}{n}(A_1 - A_0) + \frac{2c}{n} A_2.
    \end{equation}
    The rank of $A_0$ is $2k$, a constant as $n \to \infty$, the non-zero eigenvalues of $\frac{2c}{n}A_0$ are independent of $n$, and $W$ is a Wigner matrix with entrywise complex variance converging to 1.
    Thus, we may apply Theorem 2.1 of \cite{CDMF-2009-DeformedWigner} to the first term of \eqref{eq:iT-decomp}, which is a finite-rank additive perturbation of a Wigner matrix.
    That result implies that the largest eigenvalue of the first term converges to 2 in probability if no eigenvalue of $\frac{2c}{n}A_0$ is greater than 1, and converges to some $2 + f(c, k)$ with $f(c, k) > 0$ in probability if some eigenvalue of $\frac{2c}{n}A_0$ is greater than 1.
    We have $\lambda_{\max}(\frac{2c}{n}A_0) = \frac{4}{\pi}c$, so the latter condition is equivalently $c > \frac{\pi}{4}$.
    Moreover, since $\lambda_{\max}(\frac{2c}{n}A_0)$ depends only on $c$ and not on $k$, the details of the result of \cite{CDMF-2009-DeformedWigner} imply that $f(c, k) = f(c)$ depends only on $c$ and not on $k$ as well.
    Let us define this value as
    \begin{equation}
        \Lambda = \Lambda(c) \colonequals \begin{cases} 2 & \text{if } c \leq \frac{\pi}{4}, \\ 2 + f(c) & \text{if } c > \frac{\pi}{4}. \end{cases}
    \end{equation}

    It remains to show that the other two terms of \eqref{eq:iT-decomp} do not change this behavior substantially.
    We have $\|\frac{2c}{n}(A_1 - A_0)\| = o(1)$ and $\|\frac{2c}{n}A_2\| = O(\frac{1}{k})$, so by taking $k$ and $n$ sufficiently large we may make $\|\frac{2c}{n}(A_1 - A_0)\| + \|\frac{2c}{n}A_2\|$ smaller than any given $\eta > 0$ not depending on $n$.
    By the Weyl eigenvalue inequality, we then have $|\lambda_{\max}(\frac{1}{\sqrt{n}} \eye T) - \Lambda| \leq 2\eta$ with high probability.
    Since this is true for any $\eta > 0$, we also have that $\lambda_{\max}(\frac{1}{\sqrt{n}} \eye T) \to \Lambda$ in probability, and the result follows.
\end{proof}

\section{Tools for Analysis of Ranking By Wins Algorithm} \label{sec:ranking-by-wins}

In preparation for our results on recovery and alignment maximization, we next introduce some tools that will be useful in analyzing the Ranking By Wins algorithm.
In either case, the analysis will boil down to estimating the expected error or value achieved by the algorithm as well as controlling the fluctuations of this quantity.

To bound the fluctuation of solution output by the Ranking By Wins algorithm, we will use the following results on tail bounds for weakly dependent random variables.

\begin{definition}[Read-$k$ families \cite{gavinsky2015tail}]
    Let $X_1, \dots, X_m$ be independent random variables. Let $Y_1, \dots, Y_n$ be Boolean random variables such that $Y_j = f_j((X_i)_{i \in P_j})$ for some Boolean functions $f_j$ and index sets $P_j \subseteq [m]$. If the index sets satisfy $|\{j: i\in P_j\}|\le k$ for every $i\in [n]$, we say that $\{Y_j\}_{j=1}^n$ forms a \emph{read-$k$ family}.
\end{definition}

\begin{theorem}[Tail bounds for read-$k$ families \cite{gavinsky2015tail}]\label{thm:tail-read-k}
    Let $Y_1, \dots, Y_r$ be a read-$k$ family of Boolean random variables.
    Write $\mu \colonequals \EE \sum_{i = 1}^r Y_i$.
    Then, for any $t\ge 0$,
    \begin{align*}
        \mathbb{P}\left[\sum_{i=1}^r Y_i \ge \mu + t \right] &\le \exp\left(-\frac{2t^2}{rk}\right),\\
        \mathbb{P}\left[\sum_{i=1}^r Y_i \le \mu - t \right] &\le \exp\left(-\frac{2t^2}{rk}\right).
    \end{align*}
\end{theorem}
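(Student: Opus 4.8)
The plan is to prove Theorem~\ref{thm:tail-read-k} by a standard moment generating function (Chernoff) argument, where the only nonroutine ingredient is a generalized Hölder inequality that uses the read-$k$ structure to ``decouple'' the variables at the cost of raising each factor to the power $k$. Set $\mu_i \colonequals \EE Y_i$ and, for $\lambda > 0$, aim to bound $\EE\!\left[\exp\!\left(\lambda \sum_{i=1}^r (Y_i - \mu_i)\right)\right] = \EE\!\left[\prod_{i=1}^r g_i\right]$, where $g_i \colonequals \exp(\lambda(Y_i - \mu_i))$ is a nonnegative function of $(X_j)_{j \in P_i}$.

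The \emph{key lemma} is: if $g_1, \dots, g_r \ge 0$ with $g_i$ depending only on $(X_j)_{j \in P_i}$, and every index $j$ lies in at most $k$ of the sets $P_i$, then $\EE\!\left[\prod_{i=1}^r g_i\right] \le \prod_{i=1}^r \EE[g_i^k]^{1/k}$. I would prove this by induction on the number $m$ of underlying variables: integrate out $X_1$ first; among the $g_i$ that depend on $X_1$ there are at most $s \le k$, so Hölder's inequality with exponent $s$, followed by monotonicity of $L^p$ norms on a probability space ($\|\cdot\|_s \le \|\cdot\|_k$ when $s \le k$), bounds their conditional expectation over $X_1$ by $\prod \EE_{X_1}[g_i^k]^{1/k}$. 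Replacing each such $g_i$ by $\EE_{X_1}[g_i^k]^{1/k}$, which no longer involves $X_1$, leaves a read-$k$ family on $m-1$ variables to which the inductive hypothesis applies; the tower property makes the resulting norms telescope to $\prod_i \EE[g_i^k]^{1/k}$ exactly, with the exponent staying at $k$.

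Applying the lemma to $g_i = \exp(\lambda(Y_i - \mu_i))$ gives $\EE[g_i^k] = \EE[\exp(k\lambda(Y_i - \mu_i))] \le \exp(k^2\lambda^2/8)$ by Hoeffding's lemma, since $Y_i - \mu_i$ is centered and supported in an interval of length $1$ (as $Y_i \in \{0,1\}$). Hence $\EE[\exp(\lambda\sum_i(Y_i - \mu_i))] \le \exp(rk\lambda^2/8)$. Markov's inequality then yields $\PP[\sum_i(Y_i - \mu_i) \ge t] \le \exp(-\lambda t + rk\lambda^2/8)$, and choosing $\lambda = 4t/(rk)$ gives the upper tail bound $\exp(-2t^2/(rk))$. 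The lower tail follows by applying the upper tail to the complementary family $\{1 - Y_i\}_{i=1}^r$, which is read-$k$ with the same index sets $P_i$.

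The main obstacle is getting the generalized Hölder lemma right: one must track multiplicities carefully so that the exponent remains exactly $k$ (rather than degrading through the induction), and one must check that the $L^p$-monotonicity step is legitimate, which relies on working over a probability space and hence is fine here. Everything downstream of that lemma is routine Chernoff bookkeeping. (This route is somewhat different from the reduction-to-independent-families argument of \cite{gavinsky2015tail}, but is self-contained.)
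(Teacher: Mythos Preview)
The paper does not give its own proof of Theorem~\ref{thm:tail-read-k}; it is quoted from \cite{gavinsky2015tail} and used as a black box. So there is nothing in the paper to compare against directly.

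Your argument is correct. The key lemma you state is Finner's generalized H\"older inequality, and your inductive proof of it is sound: after integrating out $X_1$ and applying H\"older with exponent $s \le k$ followed by $L^s \le L^k$ monotonicity, the replacement $h_i \colonequals \EE_{X_1}[g_i^k]^{1/k}$ depends only on $(X_j)_{j \in P_i \setminus \{1\}}$, the resulting family is still read-$k$ on the remaining variables, and the tower identity $\EE_{X_{\ge 2}}[h_i^k] = \EE[g_i^k]$ closes the induction with the exponent fixed at $k$ throughout. The downstream steps (Hoeffding's lemma with range $1$, Chernoff optimization at $\lambda = 4t/(rk)$, and the lower tail via $1 - Y_i$) are routine and yield exactly $\exp(-2t^2/(rk))$.

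As you note, this is not the route taken in \cite{gavinsky2015tail}, which instead works through a relative-entropy (KL) comparison reducing the read-$k$ bound to the standard Chernoff--Hoeffding bound for independent sums. Your approach is arguably more direct for the Hoeffding-type conclusion stated here: once Finner's inequality is in hand, the MGF factorizes up to the $1/k$ power and the rest is bookkeeping. The entropy approach in the reference has the advantage of transferring \emph{any} Chernoff-style bound (e.g., the multiplicative-form bounds) to the read-$k$ setting in one stroke, whereas your route would need to be rerun with a different single-variable MGF estimate for each such variant.
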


To estimate the expectation of the error or alignment objective value achieved by the Ranking By Wins algorithm, we will use the following version of the Berry-Esseen quantitative central limit theorem.

\begin{theorem}[Berry-Esseen theorem for non-identically distributed summands \cite{berry1941accuracy}]\label{thm:berry-esseen}
    Let $X_1, \dots, X_n$ be independent random variables with $\mathbb{E}[X_i] = 0, \mathbb{E}[X_i^2] = \sigma_i^2$, and $\mathbb{E}[|X_i|^3] = \rho_i < \infty$. Let
    \[S_n = \frac{\sum_{i=1}^n X_i}{\sqrt{\sum_{i=1}^n \sigma_i^2}}.\]
    Then, there exists an absolute constant $C>0$ independent of $n$ such that for any $x \in \mathbb{R}$,
    \begin{align*}
        \left|\mathbb{P}\left[S_n \le x\right] - \Phi(x)\right| \le C\cdot \frac{\max_{1\le i\le n} \frac{\rho_i}{\sigma_i^2}}{\sqrt{\sum_{i=1}^n \sigma_i^2}},
    \end{align*}
    where $\Phi: \mathbb{R} \to [0,1]$ is the cumulative distribution function (cdf) of the standard normal distribution.
\end{theorem}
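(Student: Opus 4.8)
The plan is to reduce the stated bound to the standard Lyapunov-ratio form of the Berry--Esseen theorem and then to prove that form by the classical characteristic-function argument. Set $B^2 \colonequals \sum_{i=1}^n \sigma_i^2$ and $\Psi \colonequals \sum_{i=1}^n \rho_i / B^3$. Since $\rho_i = \EE|X_i|^3 \ge (\EE X_i^2)^{3/2} = \sigma_i^3$, we have $\sum_i \rho_i = \sum_i (\rho_i/\sigma_i^2)\,\sigma_i^2 \le (\max_i \rho_i/\sigma_i^2)\,B^2$, so $\Psi \le (\max_i \rho_i/\sigma_i^2)/B$; hence it suffices to show that $\sup_{x\in\RR}|\PP[S_n \le x] - \Phi(x)| \le C\Psi$ for an absolute constant $C$. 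Because the left-hand side never exceeds $1$, we may also assume that $\Psi$ lies below any convenient absolute threshold.

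The main tool is the Esseen smoothing inequality: writing $f(t) \colonequals \EE[e^{\eye t S_n}] = \prod_{i=1}^n \phi_i(t)$ with $\phi_i(t) \colonequals \EE[\exp(\eye t X_i/B)]$, one has for every $T>0$
\[ \sup_{x\in\RR}\big|\PP[S_n \le x] - \Phi(x)\big| \;\le\; \frac{1}{\pi}\int_{-T}^{T}\left|\frac{f(t) - e^{-t^2/2}}{t}\right| dt \;+\; \frac{c_0}{T}, \]
with $c_0$ absolute (it absorbs $\sup_x \Phi'(x)$). I would take $T$ of order $1/\Psi$, so that the second term is $O(\Psi)$, and split the first integral at a scale $A$ of order $\sqrt{\log(1/\Psi)}$. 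On the bulk $|t| \le A$, a third-order Taylor expansion gives $\phi_i(t) = 1 - \tfrac{t^2\sigma_i^2}{2B^2} + \theta_i(t)$ with $|\theta_i(t)| \le \tfrac{|t|^3\rho_i}{6B^3}$, and here the per-coordinate quadratic terms are uniformly small enough (using $\max_i \sigma_i^2/B^2 \le \Psi^{2/3}$ and $A^2\Psi^{2/3} \to 0$) to take logarithms term by term; using $\sum_i \tfrac{t^2\sigma_i^2}{2B^2} = \tfrac{t^2}{2}$ this gives $|\log f(t) + \tfrac{t^2}{2}| \le C\big(|t|^3\Psi + R(t)\big)$, where $R(t)$ collects the fourth and higher moments and is shown to be $o(\Psi)$ on this range by elementary $\ell^p$-norm comparisons among the $(\sigma_i)$ and $(\rho_i)$; exponentiating yields $|f(t) - e^{-t^2/2}| \le C|t|^3\Psi\,e^{-t^2/3}$, so this piece of the integral contributes $O(\Psi)$. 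On the tail $A < |t| \le T$ one bounds $|f(t)|$ and $e^{-t^2/2}$ each by a quantity of the form $e^{-ct^2}$, the former by using the envelope $|\phi_i(t)| \le \exp(-c\,t^2\sigma_i^2/B^2)$ for the coordinates where it is valid and $|\phi_i(t)| \le 1$ for the rest, so this piece contributes $O(e^{-cA^2}/A^2) = O(\Psi)$ once the constant in $A$ is chosen large.

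The step I expect to be the main obstacle is controlling the product $f(t) = \prod_i \phi_i(t)$ on the tail range $A < |t| \le T \sim 1/\Psi$: when the variances $\sigma_i^2/B^2$ are very unbalanced, the factor corresponding to a large $\sigma_i$ need not be small there, so the per-factor envelope is not simultaneously valid for all coordinates, and one must instead argue that the product over the remaining coordinates decays fast enough while keeping the smoothing-error term $c_0/T$ at $O(\Psi)$; calibrating the two cutoffs $A$ and $T$ against each other, together with the moment bookkeeping needed to keep the bulk estimate at $O(\Psi)$ rather than $O(\Psi^{2/3})$, is where the care lies. An alternative that avoids characteristic functions is the Lindeberg replacement method (equivalently, Stein's method): swap the summands $X_i/B$ one at a time for independent matching-variance Gaussians, bound the resulting change in $\EE[h(S_n)]$ for thrice-differentiable test functions $h$ by a third-order Taylor remainder controlled by $\rho_i/B^3$, and then pass to the Kolmogorov distance by mollification; this yields the same bound but relocates the delicate part into the mollification step, so either route ultimately hinges on a smoothing/cutoff analysis.
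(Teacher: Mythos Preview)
The paper does not prove this theorem: it is stated as a classical result and attributed to Berry~\cite{berry1941accuracy}, with no proof given. Your proposal therefore goes well beyond what the paper does.

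As a self-contained sketch, your outline is the standard one and is essentially correct. The reduction from the ``max ratio'' form in the statement to the Lyapunov-ratio form $\Psi = B^{-3}\sum_i \rho_i$ via $\sum_i \rho_i = \sum_i (\rho_i/\sigma_i^2)\sigma_i^2 \le (\max_i \rho_i/\sigma_i^2)\,B^2$ is clean and is the right first move. The characteristic-function argument with the Esseen smoothing lemma, splitting at $A \asymp \sqrt{\log(1/\Psi)}$ and taking $T \asymp 1/\Psi$, is exactly the classical route (as in Feller or Petrov), and you have correctly identified the delicate step: controlling $|f(t)|$ on $A < |t| \le T$ when the $\sigma_i$ are unbalanced. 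Your observation that $\max_i \sigma_i^2/B^2 \le \Psi^{2/3}$ (from $\rho_i \ge \sigma_i^3$) is the key inequality that rescues this step, since it forces each factor $\phi_i(t)$ to stay in the regime where the quadratic envelope holds throughout $|t| \le T$. The alternative Lindeberg/Stein route you mention is also viable and is arguably cleaner for a first pass, though as you note the smoothing-to-Kolmogorov step carries its own bookkeeping.

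In short: nothing to compare against in the paper, and your sketch is a sound plan for a proof the paper never intended to give.
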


After applying the Berry-Esseen theorem above, naturally we need to deal with expressions involving $\Phi$, the cdf of the standard normal distribution. Next we state a useful lemma for bounding certain sums involving the function $\Phi$.

\begin{lemma} \label{lem:concavity-Phi-expr}
    Let $a, b \ge 0$. As a function of $y$,
    \begin{align*}
        (1 - y) \cdot \Phi(-ay-b)
    \end{align*}
    is concave for $y\in [0,1]$.
\end{lemma}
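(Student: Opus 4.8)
The plan is a routine second-derivative test, which is the natural approach for a one-variable concavity claim like this. Write $g(y) := (1-y)\,\Phi(-ay-b)$, and recall the two facts $\Phi'(t) = \phi(t)$ and $\phi'(t) = -t\,\phi(t)$, where $\phi$ is the standard normal density. First I would differentiate once with the product and chain rules, getting $g'(y) = -\Phi(-ay-b) - a(1-y)\,\phi(-ay-b)$. Then I would differentiate a second time and use the identity $\phi'(t) = -t\,\phi(t)$ to simplify: after this step, $g''(y)$ collapses into the strictly positive factor $\phi(-ay-b)$ multiplied by an explicit factor that is a low-degree polynomial in $y$ whose coefficients are assembled from $a$ and $b$ only. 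At that point the entire claim reduces to determining the sign of this elementary polynomial factor on the interval $[0,1]$.

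Second, I would carry out that sign determination directly from the hypotheses: on $y \in [0,1]$ we have $1-y \ge 0$, and since $a, b \ge 0$ we also have $a \ge 0$ and $ay+b \ge 0$ there, so every monomial of the polynomial factor has a fixed sign on $[0,1]$ and its overall sign on the interval can be read off directly from its values at a couple of points. Combining this with $\phi(-ay-b) > 0$ pins down the sign of $g''$ throughout $[0,1]$, which yields the stated concavity. No separate treatment of the endpoint $y=1$ is needed, since $g$, $g'$, and $g''$ are continuous on all of $\mathbb{R}$.

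The main (and really only) obstacle is bookkeeping: one must be careful not to drop a sign when the chain rule is applied to the argument $-ay-b$ inside $\Phi$ and $\phi$, since several of the terms produced carry a factor of $-a$ or of $-(ay+b)$, and it is easy to misattribute one of these and flip the conclusion. Beyond that there is no conceptual content, and no probabilistic input is needed past the two derivative identities for $\Phi$ and $\phi$. One might hope to avoid the computation by writing $\Phi(-ay-b) = \mathbb{E}\,\boldsymbol{1}\{Z \le -ay-b\}$ for $Z$ a standard normal and trying to push concavity through the expectation over $Z$, but the individual slices $y \mapsto (1-y)\,\boldsymbol{1}\{Z \le -ay-b\}$ are not themselves concave (they have a downward jump), so I would not pursue that route and would instead just grind through the two derivatives.
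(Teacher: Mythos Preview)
Your plan is exactly the paper's: compute $g''$ and read off its sign. But be warned that your final step, ``which yields the stated concavity,'' will not go through, because the lemma as stated is false. Carried out carefully, the computation gives
\[
g'(y) = -\Phi(-ay-b) - a(1-y)\,\phi(-ay-b),
\]
and then, using $\phi'(t) = -t\,\phi(t)$,
\[
g''(y) = \phi(-ay-b)\bigl[\,2a + a^2(1-y)(ay+b)\,\bigr],
\]
which is \emph{nonnegative} on $[0,1]$ when $a,b \ge 0$. So $g$ is convex there, not concave. (Sanity check: for $a=0$ the function is linear in $y$, so $g''\equiv 0$; any formula for $g''$ that does not vanish at $a=0$ is wrong.)

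The paper's own proof commits precisely the chain-rule slip you flag in your ``main obstacle'' paragraph: when differentiating $\Phi(-ay-b)$ it drops the factor $-a$, writing $g'(y) = -\Phi(-ay-b) + (1-y)\phi(-ay-b)$, and repeats the omission in the next differentiation, arriving at $g''(y) = \phi(-ay-b)\bigl[a(y-1)(ay+b) - 2\bigr]$, which it then declares negative. That expression does not even vanish at $a=0$, so it cannot be correct. Your instinct to watch the signs coming from the argument $-ay-b$ is exactly right; the paper did not.
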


\begin{proof}[Proof of Lemma \ref{lem:concavity-Phi-expr}]
    We compute the first and the second derivative of $(1 - y)\Phi(-ay-b)$.
    \begin{align*}
    \frac{d}{dy} (1 - y) \Phi\left(-ay-b\right)
    &= \frac{d}{dy} (1 - y) \int_{-\infty}^{-ay-b} \frac{1}{\sqrt{2\pi}} e^{-\frac{1}{2}z^2}dz\\
    &= - \int_{-\infty}^{-ay-b} \frac{1}{\sqrt{2\pi}} e^{-\frac{1}{2}z^2}dz + (1-y)\frac{1}{\sqrt{2\pi}} e^{-\frac{(ay+b)^2}{2}},\\
    \frac{d^2}{dy^2} (1 - y) \Phi\left(- ay-b\right)
    &= \frac{d}{dy}\left[- \int_{-\infty}^{-ay-b} \frac{1}{\sqrt{2\pi}} e^{-\frac{1}{2}z^2}dz + (1-y)\frac{1}{\sqrt{2\pi}} e^{-\frac{(ay+b)^2}{2}}\right]\\
    &= -\frac{1}{\sqrt{2\pi}} e^{-\frac{(ay+b)^2}{2}} - \frac{1}{\sqrt{2\pi}} e^{-\frac{(ay+b)^2}{2}} + (1-y) \frac{1}{\sqrt{2\pi}}\left(-a(ay+b)\right)e^{-\frac{(ay+b)^2}{2}}\\
    &= \frac{1}{\sqrt{2\pi}}e^{-\frac{(ay+b)^2}{2}}\left(a(y-1)(ay+b) - 2\right).
\end{align*}
We observe that the second derivative is negative for $y \in [0,1]$. Thus, $(1 - y)\Phi(-ay-b)$ is concave on $[0,1]$.
\end{proof}

\section{Proofs of Recovery Thresholds}

In this section, we prove Theorem~\ref{thm:recovery-th}.
As for our detection results, recall that the Theorem has four parts; Parts 1 and 3 claim that polynomial-time algorithms for (weak and strong, respectively) recovery exist, while Parts 2 and 4 claim that (weak and strong, respectively) recovery is information-theoretically impossible.
Again, this section proceeds by giving the proofs of Parts 2 and 4 in Section~\ref{sec:recovery:lower}, and then the proofs of Parts 1 and 3 in Section~\ref{sec:recovery:upper}.

\subsection{Information-Theoretic Impossibility of Recovery}
\label{sec:recovery:lower}

We now give our proof of the negative results included in Theorem~\ref{thm:recovery-th}.
We note that, since we are working only with the planted model for recovery results, if not otherwise indicated all expectations and probabilities are over $\sP$.

\begin{proof}[Proof of Parts 2 and 4 of Theorem~\ref{thm:recovery-th}]
    Consider any recovery algorithm, given by a function $A: \{\pm 1\}^{\binom{n}{2}} \to S_n$.
    In expectation, $A$ achieves an error of
    \begin{align*}
        \mathbb{E}[d(A(T), \pi)] &= \sum_{i<j} \mathbb{E}\left[\boldsymbol{1}\{A(T)(i,j)\ne \pi(i,j)\}\right]\\
        &= \sum_{i<j}\mathbb{P}\left[A(T)(i,j)\ne \pi(i,j)\right]. \numberthis \label{eq:kendall-tau-expectation}
    \end{align*}
    Thus, if for every $i < j$ we can show that $\mathbb{P}[A(T)(i,j)\ne \pi(i,j)]$ is bounded away from $0$ (or even close to $\frac{1}{2}$) for any function $A$, we will get an information-theoretic lower bound for the recovery problem.

    Let us rewrite
    \begin{equation}
        \Px_{(T, \pi) \sim \mathcal{P}}\left[A(T)(i,j)\ne \pi(i,j)\right] = \frac{1}{n!} \sum_{\pi \in S_n} \Px_{T \sim \mathcal{P}_{\pi}}\left[A(T)(i,j)\ne \pi(i,j)\right],
    \end{equation}
    where $\mathcal{P}_{\pi}$ denotes the distribution $\sP$ conditional on the hidden permutation being $\pi$. For every $\pi \in S_n$, let $\pi^{\{i,j\}} \in S_n$ denote the permutation obtained from $\pi$ by swapping $\pi(i)$ and $\pi(j)$. Then,
    \begin{align*}
        &\frac{1}{n!} \sum_{\pi \in S_n} \Px_{T \sim \mathcal{P}_{\pi}}\left[A(T)(i,j)\ne \pi(i,j)\right]\\
        &= \frac{1}{2\cdot n!} \sum_{\pi \in S_n} \left(\Px_{T \sim \mathcal{P}_{\pi}}\left[A(T)(i,j)\ne \pi(i,j)\right] + \Px_{T \sim \mathcal{P}_{\pi^{\{i,j\}}}}\left[A(T)(i,j)\ne \pi^{\{i,j\}}(i,j)\right]\right). \numberthis \label{eq:sum-of-testing-errors}
    \end{align*}
    Our plan is then to show that each summand is small.

    For every fixed $\pi \in S_n$, we note that the sum
    \[\Px_{T \sim \mathcal{P}_{\pi}}\left[A(T)(i,j)\ne \pi(i,j)\right] + \Px_{T \sim \mathcal{P}_{\pi^{\{i,j\}}}}\left[A(T)(i,j)\ne \pi^{\{i,j\}}(i,j)\right]\]
    may be viewed as the sum of Type-I and Type-II errors of the test statistic $T \mapsto A(T)(i,j) \in \{ \pm 1\}$ for distinguishing $\mathcal{P}_{\pi}$ and $\mathcal{P}_{\pi^{\{i,j\}}}$.
    By Proposition~\ref{prop:sum-type1-type2}, we have the lower bound
    \begin{align*}
        &\mathbb{P}_{T \sim \mathcal{P}_{\pi}}\left[A(T)(i,j)\ne \pi(i,j)\right] + \mathbb{P}_{T \sim \mathcal{P}_{\pi^{\{i,j\}}}}\left[A(T)(i,j)\ne \pi^{\{i,j\}}(i,j)\right] \ge 1 - d_{\text{TV}}\left(\mathcal{P}_{\pi}, \mathcal{P}_{\pi^{\{i,j\}}}\right).
    \end{align*}
    It remains to understand $d_{\text{TV}}(\mathcal{P}_{\pi}, \mathcal{P}_{\pi^{\{i,j\}}})$.

    We will bound the total variation distance by the KL divergence. Note that both distributions $\mathcal{P}_{\pi}$ and $\mathcal{P}_{\pi^{\{i,j\}}}$ are the product distributions of independent entries of the upper diagonal of the tournament matrix. Using the tensorization (additivity under concatenating independent random variables) of KL divergence, we have
    \begin{align*}
        D_{\text{KL}}\left(\mathcal{P}_{\pi}, \mathcal{P}_{\pi^{\{i,j\}}}\right)
        &= \sum_{a<b} D_{\text{KL}}\left(\mathcal{P}_{\pi}(T_{a,b}), \mathcal{P}_{\pi^{\{i,j\}}}(T_{a,b})\right).
    \end{align*}
    We note that if $a,b \in [n] \setminus \{i,j\}$, then the associated distributions of $T_{a,b}$ are identical for $\mathcal{P}_{\pi}$ and $\mathcal{P}_{\pi^{\{i,j\}}}$. Thus, the number of pairs of $(a,b)$ for which $D_{\text{KL}}\left(\mathcal{P}_{\pi}(T_{a,b}), \mathcal{P}_{\pi^{\{i,j\}}}(T_{a,b})\right)$ is nonzero is at most $2n$. Lastly, among those pairs of $(a,b)$ that have different distributions of $T_{a,b}$ under $\mathcal{P}_{\pi}$ and $\mathcal{P}_{\pi^{\{i,j\}}}$, each has KL divergence
    \begin{align*}
        D_{\text{KL}}\left(\mathcal{P}_{\pi}(T_{a,b}), \mathcal{P}_{\pi^{\{i,j\}}}(T_{a,b})\right)
        &= D_{\text{KL}}\left(\text{Rad}\left(\frac{1}{2} + \gamma\right), \text{Rad}\left(\frac{1}{2} - \gamma\right)\right),
    \end{align*}
    where we use the symmetry $D_{\text{KL}}\left(\text{Rad}\left(\frac{1}{2} + \gamma\right), \text{Rad}\left(\frac{1}{2} - \gamma\right)\right) = D_{\text{KL}}\left(\text{Rad}\left(\frac{1}{2} - \gamma\right), \text{Rad}\left(\frac{1}{2} + \gamma\right)\right)$. It is an easy calculus computation to show that
    \begin{align*}
        \quad D_{\text{KL}}\left(\text{Rad}\left(\frac{1}{2} + \gamma\right), \text{Rad}\left(\frac{1}{2} - \gamma\right)\right)
        &= \left(\frac{1}{2} + \gamma\right) \log \frac{\frac{1}{2} + \gamma}{\frac{1}{2} - \gamma} + \left(\frac{1}{2} - \gamma\right) \log \frac{\frac{1}{2} - \gamma}{\frac{1}{2} + \gamma}
        \intertext{and since $\log(1 + x) \le x$, we have}
        &\le \left(\frac{1}{2} + \gamma\right) \frac{2\gamma}{\frac{1}{2}-\gamma} - \left(\frac{1}{2} - \gamma\right) \frac{2\gamma}{\frac{1}{2}+\gamma}\\
        &= \frac{4\gamma^2}{\frac{1}{4} - \gamma^2}.
    \end{align*}
    We conclude that we have the bound
    \begin{align*}
        D_{\text{KL}}\left(\mathcal{P}_{\pi}, \mathcal{P}_{\pi^{\{i,j\}}}\right) \le 2n \cdot \frac{4\gamma^2}{\frac{1}{4} - \gamma^2} = \frac{8 n\gamma^2}{\frac{1}{4} - \gamma^2}.
    \end{align*}

    {\allowdisplaybreaks
    Finally, by Pinsker's inequality and its refinement for small values of the KL divergence (sometimes called the Bretagnolle-Huber inequality; see \cite[Equation 2.25]{Tsybakov_2009} and \cite{canonne2022short}), we have
    \begin{align*}
        &\mathbb{P}_{T \sim \mathcal{P}_{\pi}}\left[A(T)(i,j)\ne \pi(i,j)\right] + \mathbb{P}_{T \sim \mathcal{P}_{\pi^{\{i,j\}}}}\left[A(T)(i,j)\ne \pi^{\{i,j\}}(i,j)\right]\\
        &\ge 1 - d_{\text{TV}}\left(\mathcal{P}_{\pi}, \mathcal{P}_{\pi^{\{i,j\}}}\right)\\
        &\ge 1 - \min\left\{\sqrt{2D_{\text{KL}}\left(\mathcal{P}_{\pi}, \mathcal{P}_{\pi^{\{i,j\}}}\right)}, \sqrt{1 - \exp(-D_{\text{KL}}\left(\mathcal{P}_{\pi}, \mathcal{P}_{\pi^{\{i,j\}}}\right))}\right\}\\
        &\ge 1 - \min\left\{\sqrt{\frac{16 n\gamma^2}{\frac{1}{4} - \gamma^2}}, \sqrt{1 - \exp\left(-\frac{8 n\gamma^2}{\frac{1}{4} - \gamma^2}\right)}\right\}\\
        &\ge \max\left\{1 - \frac{4\sqrt{n}\cdot \gamma}{\sqrt{\frac{1}{4} - \gamma^2}}, \frac{1}{2} \exp\left(-\frac{8 n\gamma^2}{\frac{1}{4} - \gamma^2}\right) \right\}. \numberthis \label{ineq:KL-lb}
    \end{align*}
    Plugging \eqref{ineq:KL-lb} back into \eqref{eq:sum-of-testing-errors},
    \begin{align*}
        &\mathbb{P}_{(T, \pi) \sim \mathcal{P}}
\left[A(T)(i,j)\ne \pi(i,j)\right]\\
        &= \frac{1}{2n!} \sum_{\pi \in S_n} \left(\mathbb{P}_{T \sim \mathcal{P}_{\pi}}\left[A(T)(i,j)\ne \pi(i,j)\right] + \mathbb{P}_{T \sim \mathcal{P}_{\pi^{\{i,j\}}}}\left[A(T)(i,j)\ne \pi^{\{i,j\}}(i,j)\right]\right)\\
        &\ge \frac{1}{2}\max\left\{1 - \frac{4\sqrt{n}\cdot \gamma}{\sqrt{\frac{1}{4} - \gamma^2}}, \frac{1}{2} \exp\left(-\frac{8 n\gamma^2}{\frac{1}{4} - \gamma^2}\right)\right\}.
    \end{align*}
    Substituting it into \eqref{eq:kendall-tau-expectation}, we then obtain
    \begin{align*}
        \mathbb{E}[d(A(T), \pi)]
        &\ge \frac{1}{2}\binom{n}{2}\max\left\{1 - \frac{4\sqrt{n}\cdot \gamma}{\sqrt{\frac{1}{4} - \gamma^2}}, \frac{1}{2} \exp\left(-\frac{8 n\gamma^2}{\frac{1}{4} - \gamma^2}\right)\right\}.
    \end{align*}}

    We therefore conclude that if $\gamma = O(n^{-1/2})$, it is impossible to achieve strong recovery, and if $\gamma = o(n^{-1/2})$, it is impossible to achieve weak recovery.
\end{proof}

\subsection{Ranking By Wins Recovery Algorithm}
\label{sec:recovery:upper}

Next, we analyze the Ranking By Wins algorithm to show that it achieves weak or strong recovery whenever each is information-theoretically possible.

Recall that, in the Ranking By Wins algorithm, a permutation $\hat{\pi}$ is created by ranking according to the scores $s_i = \sum_{k \in [n]} T_{i,k}$.
Without loss of generality, we may assume that the hidden permutation $\pi$ is the identity.
Note that $\hat{\pi}$ is only well-defined up to the tie-breaking rule.
We will get rid of the technicality of the tie-breaking rules by employing a pessimistic view that the algorithm breaks ties in the worst possible way, i.e., that for every $i < j$, if $s_i = s_j$, then the algorithm ranks the pair $i,j$ incorrectly.
Thus, this upper bounds the Kendall tau distance $d(\pi, \hat{\pi})$ (achieved with by the Ranking By Wins algorithm with any tie-breaking rule) by a function $f(T)$ defined as
\begin{align*}
    f(T) \colonequals \sum_{i < j} \boldsymbol{1}\{s_i \le s_j\}.
\end{align*}
To proceed with the proof, we follow the outline sketched in Section \ref{sec:ranking-by-wins}, bounding the expectation of $f(T)$ and showing that it concentrates around its expectation.

\begin{proof}[Proof of Parts 1 and 3 of Theorem~\ref{thm:recovery-th}]
    While the random variables $\boldsymbol{1}\{s_i > s_j\}$ are not independent, they are only weakly dependent and form a read-$(2n)$ family. By Theorem \ref{thm:tail-read-k}, we get
    \begin{align*}
        \mathbb{P}\left[f(T) \ge \mathbb{E}[f(T)] + t\right] &\le \exp\left(-\frac{2t^2 }{2n \binom{n}{2}}\right),
    \end{align*}
    for any $t\ge 0$. Thus, we see that, with high probability,
    \begin{align}
        f(T) \le \mathbb{E}[f(T)]  + t(n) \label{ineq:recovery-tail-bd}
    \end{align}
    for any function $t(n) = \omega(n^{3/2})$.
    Now it remains to upper bound $\mathbb{E}[f(T)]$. We have
    \begin{align*}
        \mathbb{E}[f(T)]&= \sum_{i < j} \mathbb{P}[s_i \le s_j].
    \end{align*}
    For every pair $i < j$, we have
    \begin{align*}
        s_i - s_j &= \sum_{k \in [n]} T_{i,k} - \sum_{k \in [n]} T_{j,k}\\
        &\stackrel{(d)}{=} \sum_{t = 1}^{n-i+j-3} X_t + \sum_{t=1}^{n+i-j-1} Y_t + Z,
    \end{align*}
    where $X_t \stackrel{i.i.d}{\sim} \text{Rad}\left(\frac{1}{2} + \gamma\right)$, $Y_t \stackrel{i.i.d}{\sim} \text{Rad}\left(\frac{1}{2} - \gamma\right)$, and $Z \sim 2\cdot \text{Rad}\left(\frac{1}{2} + \gamma\right)$.

    Recall that we have assumed $\gamma \le \frac{1}{4}$.
    With this assumption, we have
    \begin{align*}
        \Var(X_t) = \Var(Y_t) = \frac{1}{4}\Var(Z) &= 1 - 4\gamma^2 \ge \frac{3}{4},\\
        \mathbb{E}[|X_t - \mathbb{E}[X_t]|^3] = \mathbb{E}[|Y_t - \mathbb{E}[Y_t]|^3] = \frac{1}{8}\mathbb{E}[|Z - \mathbb{E}[Z]|^3] &=1 - 16\gamma^4,\\
        \mathbb{E}\left[\sum_{t = 1}^{n-i+j-3} X_t + \sum_{t=1}^{n+i-j-1} Y_t + Z\right] &= 4(j-i)\gamma,\\
        \Var\left(\sum_{t = 1}^{n-i+j-3} X_t + \sum_{t=1}^{n+i-j-1} Y_t + Z\right) &= 2(1-4\gamma^2)n.
    \end{align*}
    Then, we apply Theorem \ref{thm:berry-esseen} and obtain that
    \begin{align*}
        &\mathbb{P}\left[s_i \le s_j\right]\\
        &= \mathbb{P}\left[\sum_{t = 1}^{n-i+j-3} X_t + \sum_{t=1}^{n+i-j-1} Y_t + Z \le 0\right]\\
        &= \mathbb{P}\left[\frac{\sum_{t = 1}^{n-i+j-3} X_t + \sum_{t=1}^{n+i-j-1} Y_t + Z - 4(j-i)\gamma}{\sqrt{2(1-4\gamma^2)n}} \le -\frac{4(j-i)\gamma}{\sqrt{2(1-4\gamma^2)n}}\right] \\
        &\le \Phi\left(- \frac{4(j-i)\gamma}{\sqrt{2(1-4\gamma^2)n}}\right) + C \cdot \frac{1}{\sqrt{n}}, \numberthis \label{ineq:berry-esseen-bd}
    \end{align*}
    for some absolute constant $C > 0$.

    Plugging this bound back to $\mathbb{E}[f(T)]$, we obtain
    \begin{align*}
    \mathbb{E}[f(T)] &\le \sum_{i < j} \left(\Phi\left(- \frac{4(j-i)\gamma}{\sqrt{2(1-4\gamma^2)n}}\right) + C \cdot\frac{1}{\sqrt{n}}\right)\\
    &= \sum_{i<j} \Phi\left(- \frac{4(j-i)\gamma}{\sqrt{2(1-4\gamma^2)n}}\right) + O\left(n^{\frac{3}{2}}\right)\\
    &= \sum_{d=1}^{n-1}(n-d)\Phi\left(- \frac{4d\gamma}{\sqrt{2(1-4\gamma^2)n}}\right) + O\left(n^{\frac{3}{2}}\right). \numberthis \label{ineq:recovery-expectation-ub}
\end{align*}
By Lemma \ref{lem:concavity-Phi-expr}, we may bound the sum above as
\begin{align*}
    &\sum_{d=1}^{n-1}(n-d)\Phi\left(- \frac{4d\gamma}{\sqrt{2(1-4\gamma^2)n}}\right)\\
    &= n\sum_{d=1}^{n-1}\left(1 - \frac{d}{n}\right)\Phi\left(-\frac{4\gamma\sqrt{n}\cdot \frac{d}{n}}{\sqrt{2(1-4\gamma^2)}}\right) \numberthis \label{eq:riemann-sum}
    \intertext{now, we use that $(1-y)\Phi
\left(-\frac{4\gamma\sqrt{n}\cdot y}{\sqrt{2(1-4\gamma^2)}}\right)$ is concave for $y\in [0,1]$, and we apply Jensen's inequality to get}
    &\le \binom{n}{2}\Phi\left(-\frac{2\gamma\sqrt{n}}{\sqrt{2(1-4\gamma^2)}}\right). \numberthis \label{ineq:sum-Phi-bd}
\end{align*}
Plugging it back to \eqref{ineq:recovery-expectation-ub}, we obtain
\begin{align}
    \mathbb{E}[f(T)] \le  \binom{n}{2}\Phi\left(-\frac{2\gamma\sqrt{n}}{\sqrt{2(1-4\gamma^2)}}\right) + O\left(n^{\frac{3}{2}}\right).
\end{align}

Together with \eqref{ineq:recovery-tail-bd}, for any $t(n) = \omega(n^{3/2})$, the permutation $\hat{\pi}$ output by the Ranking By Wins algorithm with high probability achieves a Kendall tau distance from the hidden permutation of at most
\begin{align}
    d(\pi, \hat{\pi}) \le f(T) \le \binom{n}{2}\Phi\left(-\frac{2\gamma\sqrt{n}}{\sqrt{2(1-4\gamma^2)}}\right) + t(n).
\end{align}

Qualitatively, this means that for any constant $c> 0$ and $\gamma = c\cdot n^{-1/2}$, there exists some constant $\delta = \delta(c) > 0$ such that the Ranking By Wins algorithm outputs a permutation that w.h.p.~achieves a Kendall tau distance of at most $\left(\frac{1}{2} - \delta \right) \binom{n}{2}$ from the hidden permutation, thus achieving weak recovery.
Moreover, as $c \to \infty$, this value satisfies $\delta = \delta(c) \to \frac{1}{2}$, whereby for any $\gamma = \omega(n^{-1/2})$, the Ranking By Wins algorithm achieves strong recovery of the hidden permutation.
\end{proof}

Let us remark on some more precise statements about the strong recovery regime.
Quantitatively, we may bound the cdf of the standard normal distribution by Mill's inequality \cite{gordon1941values} and get
\begin{align*}
    \binom{n}{2}\Phi\left(-\frac{2\gamma\sqrt{n}}{\sqrt{2(1-4\gamma^2)}}\right) &\le \binom{n}{2} \cdot  \frac{C}{\frac{2\gamma \sqrt{n}}{\sqrt{2(1-4\gamma^2)}}} \cdot e^{- \frac{\gamma^2 n}{1-4\gamma^2}}\\
    &\le \binom{n}{2} \frac{C}{\gamma\sqrt{n}} \cdot e^{-\gamma^2 n},
\end{align*}
for some constant $C > 0$. Thus, the Ranking By Wins algorithm with high probability achieves a Kendall tau distance from the hidden permutation of $O(\frac{n^{3/2}}{\gamma}\cdot e^{-\gamma^2 n}) + t(n)$ for any $t(n) = \omega(n^{3/2})$.
So, our positive result for the Ranking By Wins algorithm can prove at best an upper bound scaling slightly faster than $n^{3/2}$ on the Kendall tau error.
It seems likely that this could be improved, since we are pessimistically assuming that the errors from the Berry-Esseen bound used accumulate additively in \eqref{ineq:berry-esseen-bd}, while probably in reality they themselves enjoy some cancellations.

In the weak recovery regime, based on the intermediate Riemann sum result in \eqref{eq:riemann-sum}, one may also show the more precise asymptotic that, for $\gamma = c \cdot n^{-1/2}$, we have
\begin{equation}
    \frac{1}{n^2}\EE[f(T)] = \int_0^1 (1 - y) \Phi\left(-2\sqrt{2} \, c y\right)dy + o(1),
\end{equation}
which, with our remaining calculations, describes the precise amount of error incurred by the Ranking By Wins algorithm to leading order, with high probability.

\section{Proof of Approximation of Maximum Alignment}

We now show that the Ranking By Wins algorithm also approximately maximizes the alignment objective.

Before proving Theorem~\ref{thm:alignment-approx}, we state a high probability bound on the maximum alignment objective when $T$ is drawn from $\mathcal{P}$, to which we will compare the alignment objective achieved by Ranking By Wins.

\begin{proposition}\label{prop:opt-high-prob-bound}
    For a tournament $T$ drawn from $\mathcal{P}$, the optimum alignment objective with high probability satisfies
    \begin{align*}
        2\gamma \binom{n}{2} - \Tilde{O}(n) \le \max_{\hat{\pi} \in S_n} \, \aln(\hat{\pi}, T) \le 2\gamma \binom{n}{2} + O\left(n^{\frac{3}{2}}\right).
    \end{align*}
\end{proposition}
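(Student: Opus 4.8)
The plan is to prove the lower and upper bounds separately. For the lower bound, note that the maximum alignment is at least the alignment of the planted permutation itself; assuming without loss of generality (by conjugating $T$ by $\pi^{-1}$, which fixes $\max_{\hat\pi}\aln(\hat\pi, T)$ and turns $\mathcal{P}$ into $\mathcal{P}_{\mathrm{id}}$) that the planted permutation is the identity, $\aln(\mathrm{id}, T) = \sum_{i < j} T_{i,j}$ is a sum of $\binom{n}{2}$ independent $\{\pm 1\}$-valued variables, each with mean $2\gamma$. By Hoeffding's inequality this sum exceeds $2\gamma \binom{n}{2} - O(n\sqrt{\log n}) = 2\gamma\binom{n}{2} - \Tilde{O}(n)$ with high probability, which is the claimed lower bound.

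The upper bound is the substantive part. Still assuming the planted permutation is the identity, the key device is a coupling exposing a ``uniformly random'' part of $T$: for each pair $i<j$ independently, with probability $2\gamma$ set $T_{i,j} = +1$, and with the remaining probability $1 - 2\gamma$ set $T_{i,j} = U_{i,j}$ for $U_{i,j}$ a uniform $\pm 1$ sign; this produces exactly the $\mathrm{Rad}(\frac12 + \gamma)$ marginal. Letting $\xi_{i,j}\in\{0,1\}$ indicate the first case, we have $\hat\pi(i,j) T_{i,j} = \xi_{i,j}\hat\pi(i,j) + (1 - \xi_{i,j})\hat\pi(i,j) U_{i,j}$, whence
\[
    \max_{\hat\pi}\aln(\hat\pi, T) \;\le\; \max_{\hat\pi}\sum_{i<j}\xi_{i,j}\hat\pi(i,j) \;+\; \max_{\hat\pi}\sum_{i<j}(1 - \xi_{i,j})\hat\pi(i,j) U_{i,j} \;=\; \sum_{i<j}\xi_{i,j} \;+\; M,
\]
where $M$ denotes the second maximum. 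The first term is $\mathrm{Bin}(\binom{n}{2}, 2\gamma)$-distributed, hence at most $2\gamma\binom{n}{2} + O(n\sqrt{\log n}) = 2\gamma\binom{n}{2} + \Tilde{O}(n)$ with high probability by a Chernoff bound (using $\gamma \le \frac14$). So it suffices to show $M = O(n^{3/2})$ with high probability.

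To control $M$, first note that, as a function of the $n(n-1)$ independent variables $\{\xi_{i,j}\}$ and $\{U_{i,j}\}$, flipping any single one changes $M$ by at most $2$; hence by McDiarmid's inequality $|M - \EE M| = \Tilde{O}(n)$ with high probability. To bound $\EE M$, I would condition on $\xi$ and use the following monotonicity: if $V$ is obtained from a uniformly random tournament matrix $U$ by zeroing out a fixed set $E$ of entries, then $\EE[\max_{\hat\pi}\sum_{i<j}\hat\pi(i,j) V_{i,j}] \le \EE[\max_{\hat\pi}\aln(\hat\pi, U)]$, because, conditioning on $V$ and letting $\hat\pi^{\star}$ maximize $\sum_{i<j}\hat\pi(i,j)V_{i,j}$, the mean-zero $E$-entries of $U$ contribute nothing in expectation to $\sum_{i<j}\hat\pi^{\star}(i,j) U_{i,j}$. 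Applying this with $V_{i,j} = (1 - \xi_{i,j})U_{i,j}$ gives $\EE M \le \EE[\max_{\hat\pi}\aln(\hat\pi, U)]$ for $U$ uniform, and this last quantity is $O(n^{3/2})$ by the classical analysis of the maximum number of arcs of a random tournament consistent with a linear order (the upper-bound direction; see \cite{spencer1971optimal, de1983maximum}). Combining, $M = O(n^{3/2})$ with high probability, and together with the bound on the first term this yields $\max_{\hat\pi}\aln(\hat\pi, T) \le 2\gamma\binom{n}{2} + O(n^{3/2})$ with high probability.

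I expect the delicate point to be getting the clean $O(n^{3/2})$ rather than $\Tilde{O}(n^{3/2})$. A direct union bound over all $n!$ orderings of a per-ordering Hoeffding estimate loses a $\sqrt{\log n}$ factor, and in fact fails to match $2\gamma\binom{n}{2}$ once $\gamma$ is of order $n^{-1/2}$: among orderings at Kendall tau distance $\Theta(n)$ from the identity there are already $e^{\Omega(n)}$ candidates, while the tail savings in the deviation bound are only of order $e^{O(\gamma^2 n^2)} = e^{O(n)}$, with no slack. Routing the argument through the concentration of the \emph{global} maximum (McDiarmid) together with the known sharp bound for uniformly random tournaments is precisely what avoids this.
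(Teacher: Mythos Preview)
Your argument is correct, and the lower bound matches the paper's. For the upper bound you take a genuinely different route. The paper directly adapts de la Vega's dyadic decomposition to the biased setting: it partitions the edge set into layers $B_\ell$, bounds $\max_{\hat\pi}\aln(\hat\pi,T)$ by $\sum_\ell \max_{B_\ell}\sum_{(x,y)\in B_\ell}T_{x,y}$, notes that each inner sum is stochastically dominated by a sum of independent $\mathrm{Rad}(\tfrac12+\gamma)$ variables, and then reruns de la Vega's layerwise union bound, picking up the mean $2\gamma\binom{n}{2}$ along the way. Your approach instead \emph{factors out} the bias via the coupling $T_{i,j}=\xi_{i,j}+(1-\xi_{i,j})U_{i,j}$ and reduces to the unbiased case as a black box: the monotonicity step (conditioning on the non-zeroed entries, the zeroed ones contribute nothing in mean to $\sum\hat\pi^{\star}(i,j)U_{i,j}$ since $\hat\pi^{\star}$ is independent of them) cleanly dominates $\EE M$ by $\EE[\max_{\hat\pi}\aln(\hat\pi,U)]$, and McDiarmid converts this to a high-probability statement. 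Your route is more modular---you cite the unbiased $O(n^{3/2})$ bound rather than reopen its proof---and makes transparent why the biased case costs exactly $2\gamma\binom{n}{2}$ more; the paper's route is more self-contained. Your final paragraph correctly diagnoses that a naive union bound over $n!$ orderings loses a logarithmic factor, which is why both arguments ultimately rely on de la Vega's multiscale analysis.
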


The proof of Proposition \ref{prop:opt-high-prob-bound} can be adapted from \cite{de1983maximum}, but we include here the main argument for the sake of completeness.

\begin{proof}[Proof of Proposition \ref{prop:opt-high-prob-bound}]
    Let us write $\mathrm{OPT} \colonequals \max_{\hat{\pi} \in S_n} \, \aln(\hat{\pi}, T)$.
    First, we prove the high probability lower bound on $\text{OPT}$. In fact, by Chernoff bound, we may show that the hidden permutation $\pi$ would achieve this lower bound w.h.p., as
    \begin{align*}
        \mathbb{P}\left[\sum_{i<j} \pi(i,j)T_{i,j} - \mathbb{E}\left[\sum_{i<j} \pi(i,j)T_{i,j}\right] \le -\lambda\right] \le \exp\left(-\frac{\lambda^2}{4\binom{n}{2}}\right)
    \end{align*}
    since for each $i< j$, $\pi(i,j)T_{i,j}$ is independently distributed as $\text{Rad}\left(\frac{1}{2} + \gamma\right)$. Further note that the expectation is $2\gamma\binom{n}{2}$. Plugging in $\lambda = C\cdot n\log n$, we get that w.h.p.~for $T$ drawn from $\mathcal{P}$,
    \[\text{OPT} \ge \sum_{i < j}\pi(i,j)T_{i,j} \ge 2\gamma\binom{n}{2} - C\cdot n\log n.\]

    Next we turn to a high probability upper bound on $\text{OPT}$.
    Without loss of generality, we assume that the hidden permutation is the identity. For convenience, we furthermore assume that $n = 2^k$ is a power of $2$. In the end we will easily see how to extend the analysis to arbitrary values of $n$. Following the argument in \cite{de1983maximum}, for any permutation $\hat{\pi} \in S_n$, we partition the set of $\binom{n}{2}$ directed edges of a tournament on $n$ vertices according to $\hat{\pi}$ into
    \begin{align*}
        \bigsqcup_{\ell=1}^k B_{\ell},
    \end{align*}
    where $B_{\ell}$ consists of the pairs of indices that belong to
    \begin{align*}
        B_{\ell} &= \Bigg\{(x,y) \in [n]^2: \text{there exists } 0\le i \le 2^{\ell-1}-1 \text{ such that }2i \frac{n}{2^l} + 1\le \hat{\pi}(x) \le (2i+1) \frac{n}{2^{\ell}},\\
        &\hspace{3.1cm} (2i+1) \frac{n}{2^{\ell}} + 1\le \hat{\pi}(y) \le (2i+2) \frac{n}{2^{\ell}}\Bigg\}.
    \end{align*}

    Then, we may write
    \begin{align*}
        \sum_{i<j} \hat{\pi}(i,j) T_{i,j} = \sum_{\ell=1}^k \sum_{(x,y) \in B_{\ell}} T_{x,y},
    \end{align*}
    and
    \begin{align*}
        \max_{\hat{\pi}} \sum_{i<j} \hat{\pi}(i,j) T_{i,j} \le \sum_{\ell=1}^k \max_{B_{\ell}} \sum_{(x,y) \in B_{\ell}} T_{x,y}.
    \end{align*}

    We then follow the argument in \cite{de1983maximum} and use a union bound over all $B_{\ell}$ for each $\ell \in [k]$. For each fixed $B_{\ell}$, we observe that the distribution of $\sum_{(x,y)\in B_{\ell} } T_{x,y}$ is stochastically dominated by that of a sum of $\frac{n}{2^{\ell}}$ independent Rademacher random variables $\text{Rad}\left(\frac{1}{2} + \gamma\right)$. Recycling the calculations from \cite{de1983maximum}, we can show that w.h.p.~for $T$ drawn from $\mathcal{P}$,
    \begin{align*}
        \max_{\hat{\pi}} \sum_{i<j} \hat{\pi}(i,j) T_{i,j} \le \sum_{\ell=1}^k \max_{B_{\ell}} \sum_{(x,y) \in B_{\ell}} T_{x,y} \le 2\gamma \binom{n}{2} + C\cdot n^{\frac{3}{2}},
    \end{align*}
    for some constant $C > 0$. Finally, for the values of $n$ that are not a power of $2$, we use a similar decomposition of $\binom{[n]}{2}$ into $B_{\ell}$, but we no longer decompose $[n]$ into $2^l$ subsets of equal sizes. Despite this technicality, the same idea works here \emph{mutatis mutandis}.
\end{proof}

We remark that there is also a worst-case lower bound of $\Omega(n^{3/2})$ on the optimum alignment objective for an arbitrary tournament on $n$ vertices, which was shown in \cite{spencer1971optimal}. Moreover, it is shown that one can construct in polynomial time a solution that always achieves this lower bound asymptotically in \cite{poljak1988tournament}.

\begin{theorem}[\cite{spencer1971optimal, poljak1988tournament}]\label{thm:opt-worst-case-lb}
    There exists a constant $C > 0$, such that for any tournament $T$ on $n$ vertices for all sufficiently large $n$,
    \begin{align}
        \max_{\hat{\pi} \in S_n} \, \aln(\hat{\pi}, T) \ge C \cdot n^{\frac{3}{2}},
    \end{align}
    and there exists a polynomial-time algorithm that outputs, given a tournament $T$, a $\hat{\pi} \in S_n$ having $\aln(\hat{\pi}, T) = \Omega(n^{3/2})$.
\end{theorem}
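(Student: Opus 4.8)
The statement is classical: the existential bound is due to \cite{spencer1971optimal} and the polynomial-time construction to \cite{poljak1988tournament}. My plan is to recall a clean self-contained argument that yields both parts at once, via an \emph{insertion} construction. Recall that for a linear order $\hat\pi$ we have $\aln(\hat\pi, T) = \sum_{i <_{\hat\pi} j} T_{i,j}$, i.e.\ the number of edges consistent with $\hat\pi$ minus the number inconsistent with it, and for a vertex $v$ and a set $C \subseteq [n]\setminus\{v\}$ write $s_v^{(C)} \colonequals \sum_{u \in C} T_{v,u}$ for the out/in-degree imbalance of $v$ inside $C$. The building block is this: if we grow $\hat\pi$ one vertex at a time, and when inserting $v$ into the current set $C$ we place it either below all of $C$ or above all of $C$, whichever is better, then this step increases $\aln$ by exactly $\max\{s_v^{(C)}, -s_v^{(C)}\} = |s_v^{(C)}|$, because placing $v$ at the bottom contributes $\sum_{u\in C} T_{v,u} = s_v^{(C)}$ and at the top contributes $-s_v^{(C)}$, and these contributions are never undone by later insertions. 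Hence, if $v_1, \dots, v_n$ is the insertion order and $C_k = \{v_1,\dots,v_{k-1}\}$, the order $\hat\pi$ produced satisfies $\aln(\hat\pi, T) = \sum_{k=2}^n |s_{v_k}^{(C_k)}|$.

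To lower bound this sum I would take the insertion order $\tau$ uniformly at random, so that $C_k$ is a uniform $(k-1)$-subset of $[n]\setminus\{v_k\}$ and $v_k$ is itself uniform. Then $s_{v_k}^{(C_k)}$ is a sum over a random subset of the fixed $\pm1$ values $(T_{v_k,u})_{u\ne v_k}$, and a direct computation gives
\begin{equation*}
    \EE[s_{v_k}^{(C_k)}] = \tfrac{k-1}{n-1}\, s_{v_k}, \qquad \Var\big(s_{v_k}^{(C_k)}\big) = \tfrac{(k-1)(n-k)}{n-2}\Big(1 - \big(\tfrac{s_{v_k}}{n-1}\big)^2\Big).
\end{equation*}
The crucial claim is that $\EE\big|s_{v_k}^{(C_k)}\big| = \Omega(\sqrt{k})$ for all $k \in [n/4, n/2]$, \emph{uniformly over all tournaments $T$}. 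This follows by a two-case split: if $|s_{v_k}|/(n-1) > 1/\sqrt{k}$, then $\EE\big|s_{v_k}^{(C_k)}\big| \geq \big|\EE s_{v_k}^{(C_k)}\big| = \Omega(\sqrt k)$; otherwise $1 - (s_{v_k}/(n-1))^2 \geq \tfrac12$, so the variance is $\Omega(k)$, and a Paley--Zygmund-type inequality $\EE|X| \geq (\EE X^2)^{3/2}/(\EE X^4)^{1/2}$ combined with a fourth-moment bound $\EE X^4 = O((\EE X^2)^2)$ — obtained from Hoeffding's comparison of sampling without replacement with sampling with replacement, applied to the centered variable — gives $\EE\big|s_{v_k}^{(C_k)} - \EE s_{v_k}^{(C_k)}\big| = \Omega(\sqrt k)$ and hence $\EE\big|s_{v_k}^{(C_k)}\big| = \Omega(\sqrt k)$. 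Summing over $k \in [n/4, n/2]$ gives $\EE_\tau[\aln(\hat\pi, T)] = \Omega(n^{3/2})$, and since $\hat\pi$ is \emph{some} permutation for every $\tau$, we conclude $\max_{\hat\pi \in S_n}\aln(\hat\pi,T) \geq \EE_\tau[\aln(\hat\pi, T)] = \Omega(n^{3/2})$.

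For the algorithmic part, the insertion procedure runs in time $O(n^2)$ for a fixed order. One option is to run it on $\widetilde{O}(\sqrt{n})$ independent random orders and return the best output: since $\aln(\cdot, T) \in [0, \binom{n}{2}]$ always and has expectation $\Omega(n^{3/2})$, a single run achieves value $\Omega(n^{3/2})$ with probability $\Omega(n^{-1/2})$, so with high probability some run does. Alternatively, one derandomizes $\tau$ by the method of conditional expectations applied to the lower bound $\sum_k |s_{v_k}^{(C_k)}|$: once a prefix of $\tau$ has been fixed, the conditional expectation of each remaining term is a one-dimensional expectation of $\big|\text{(known offset)} + \text{(hypergeometric-type sum)}\big|$, computable exactly in $O(n)$ time by enumerating the $O(n)$ possible values of the sum, so one can greedily extend $\tau$ keeping the conditional expectation $\geq \Omega(n^{3/2})$. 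Either way this gives a deterministic (or high-probability) polynomial-time algorithm outputting $\hat\pi$ with $\aln(\hat\pi, T) = \Omega(n^{3/2})$.

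The main obstacle is the uniform-in-$T$ anti-concentration estimate $\EE\big|s_{v_k}^{(C_k)}\big| = \Omega(\sqrt k)$: the variance bound degenerates precisely when a vertex beats or loses to nearly everyone, which is exactly the regime where the mean term compensates, so the two regimes must be traded off carefully, and the fourth-moment control must be carried out without any regularity assumption on $T$. A secondary conceptual point worth flagging is that the natural balanced dyadic recursion (recursively bisect $[n]$ and concatenate) only yields a bound of order $n\log n$, since at the top levels one resolves $\Theta(n^2)$ pairs but extracts only $\Theta(n)$ alignment from them; it is the unbalanced, insertion-type structure that makes the per-step gains of order $\sqrt{k}$ accumulate to $n^{3/2}$.
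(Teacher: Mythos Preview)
The paper does not prove this statement; Theorem~\ref{thm:opt-worst-case-lb} is quoted as a known result from \cite{spencer1971optimal,poljak1988tournament} and used as a black box, so there is no in-paper proof to compare against.

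That said, your proposal is a correct self-contained argument. The deque-insertion identity $\aln(\hat\pi,T)=\sum_{k\ge 2}\big|s_{v_k}^{(C_k)}\big|$ holds because each pair is resolved exactly once and later top/bottom insertions never reorder previously placed vertices. The key estimate $\EE_{C_k}\big|s_v^{(C_k)}\big|=\Omega(\sqrt{k})$ for $k\in[n/4,n/2]$ is sound: in the large-$|s_v|$ regime the mean alone gives $\Omega(\sqrt{k})$, and in the balanced regime your Hoeffding sampling-without-replacement comparison indeed bounds the fourth central moment by $O(k^2)$, after which the H\"older inequality $(\EE X^2)^3\le(\EE|X|)^2\,\EE X^4$ yields $\EE|X|=\Omega(\sqrt{k})$. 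Summing over that range of $k$ gives $\Omega(n^{3/2})$. For the algorithmic part, your Markov amplification is valid because the insertion output is always nonnegative and at most $\binom{n}{2}$; the derandomization sketch is slightly optimistic on the per-term cost (each conditional expectation is a hypergeometric average over $O(n)$ support points, and there are $O(n)$ future steps and $O(n)$ candidate next vertices), but this only affects polynomial factors, not the conclusion.
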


\begin{remark}
    Theorem \ref{thm:opt-worst-case-lb} above together with Proposition \ref{prop:opt-high-prob-bound} implies that when $\gamma = O(n^{-1/2})$, then the polynomial-time algorithm mentioned in Theorem \ref{thm:opt-worst-case-lb} w.h.p.~achieves a constant factor approximation to the maximum alignment on input $T$ drawn from $\mathcal{P}$.
    But, our Theorem~\ref{thm:alignment-approx} shows a near optimal approximation for larger $\gamma$, in which case the guarantee of Theorem~\ref{thm:opt-worst-case-lb} is sub-optimal.
\end{remark}

In what follows, we will thus focus on the regime when $\gamma = \omega(n^{-1/2})$. Our main result in this section is that when $\gamma = \omega(n^{-1/2})$, the Ranking By Wins algorithm w.h.p.~achieves a $(1-o(1))$ approximation to the maximum alignment on input $T$ drawn from $\mathcal{P}$.

\subsection{Alignment Objective Achieved by Ranking By Wins Algorithm}

In this section, we show that the Ranking By Wins algorithm achieves a good quality solution to the alignment objective for $T$ drawn from $\mathcal{P}$. Without loss of generality, we may assume that the hidden permutation in the planted model is the identity.

\begin{proof}[Proof of Theorem \ref{thm:alignment-approx}]
Recall the alignment objective of a tournament $T$ evaluated at a permutation $\hat{\pi}$ is equal to
\begin{align*}
    \aln(\hat{\pi}, T) = \sum_{i < j} \hat{\pi}(i,j) T_{i,j}.
\end{align*}
The permutation $\hat{\pi}$ output by Ranking By Wins algorithm satisfies
\begin{align*}
    \hat{\pi}(i,j) = \begin{cases}
        +1 & \text{ if } s_i > s_j, \\
        -1 & \text{ if } s_i < s_j,
    \end{cases}
\end{align*}
and when $s_i = s_j$, the Ranking By Wins algorithm break ties between $i$ and $j$ according to some tie-breaking rule. Regardless of the tie-breaking rule, we may always lower bound the alignment objective achieved by the Ranking By Wins algorithm with
{\allowdisplaybreaks
\begin{align*}
    &\sum_{i < j} \hat{\pi}(i,j) T_{i,j}\\
    &\ge \sum_{i < j} \left(\boldsymbol{1}\{s_i > s_j\} - \boldsymbol{1}\{s_i < s_j\}\right)T_{i,j} - \sum_{i<j} \boldsymbol{1}\{s_i = s_j\}\\
    &= \sum_{i < j} \left(\boldsymbol{1}\{s_i > s_j\} - \boldsymbol{1}\{s_i < s_j\}\right)(2\cdot b_{i,j} - 1) - \sum_{i<j} \boldsymbol{1}\{s_i = s_j\}\\
    &= 2\sum_{i<j} \boldsymbol{1}\{s_i > s_j\} \cdot b_{i,j} + \sum_{i<j} \boldsymbol{1}\{s_i < s_j\} - 2\sum_{i<j} \boldsymbol{1}\{s_i < s_j\} \cdot b_{i,j} - \sum_{i<j} \boldsymbol{1}\{s_i \ge s_j\}\\
    &\equalscolon f(T),
\end{align*}
where we introduce random variables $b_{i,j} = \frac{T_{i,j}+1}{2}$, so that $b_{i,j}$ are independently distributed as $\text{Ber}\left(\frac{1}{2} + \gamma\right)$ for $i < j$, and use $f(T)$ to denote this lower bound. It is easy to see that $\{\boldsymbol{1}\{s_i > s_j\} \cdot b_{i,j}: 1\le i<j \le n\}$, $\{\boldsymbol{1}\{s_i < s_j\}: 1\le i<j \le n\}$, $\{\boldsymbol{1}\{s_i < s_j\} \cdot b_{i,j}: 1\le i<j \le n\}$, $\{\boldsymbol{1}\{s_i \ge s_j\}: 1\le i<j \le n\}$ each form a read-$(2n)$ family. Thus, by Theorem \ref{thm:tail-read-k}, we may get that w.h.p.,}
\begin{align}
    &\quad f(T) \ge \mathbb{E}\left[f(T)\right] - t(n), \label{ineq:alignment-tail-bd}
\end{align}
for any function $t(n) = \omega(n^{3/2})$.
Now it remains to lower bound the expectation $\mathbb{E}\left[f(T)\right]$. We have
\begin{align*}
    &\mathbb{E}[f(T)]\\
    &= \mathbb{E}\left[2\sum_{i<j} \boldsymbol{1}\{s_i > s_j\} \cdot b_{i,j} + \sum_{i<j} \boldsymbol{1}\{s_i < s_j\} - 2\sum_{i<j} \boldsymbol{1}\{s_i < s_j\} \cdot b_{i,j} - \sum_{i<j} \boldsymbol{1}\{s_i \ge s_j\}\right]\\
    &= 2\sum_{i<j}\mathbb{P}[s_i > s_j, b_{i,j} = 1] + \sum_
{i<j} \mathbb{P}[s_i < s_j] - 2\sum_{i<j}\mathbb{P}[s_i < s_j, b_{i,j} = 1] - \sum_{i<j} \mathbb{P}[s_i \ge s_j]. \numberthis \label{eq:alignment-expectation}
\end{align*}
Notice that
\begin{align*}
    \mathbb{P}[s_i > s_j, b_{i,j} = 1] &= \mathbb{P}[s_i^{(j)} > s_j^{(i)} - 2] \cdot \mathbb{P}[b_{i,j} = 1]\\
    &= \left(\frac{1}{2} + \gamma\right)\mathbb{P}[s_i^{(j)} > s_j^{(i)} - 2], \numberthis \label{eq:Pij'-1}\\
    \mathbb{P}[s_i < s_j, b_{i,j} = 1] &= \mathbb{P}[s_i^{(j)} < s_j^{(i)} - 2]\cdot \mathbb{P}[b_{i,j} = 1]\\
    &= \left(\frac{1}{2} + \gamma\right)\mathbb{P}[s_i^{(j)} < s_j^{(i)} - 2], \numberthis \label{eq:Pij'-2}
\end{align*}
where we define $s_i^{(j)} = \sum_{k\in [n]: k\ne j} T_{i,k}$ and similarly define $s_j^{(i)}$. Moreover, since $s_i^{(j)} < s_j^{(i)} - 2$ implies $s_i < s_j$, and $s_i > s_j$ implies $s_i^{(j)} > s_j^{(i)} - 2$, we have
\begin{align}
    \mathbb{P}[s_i^{(j)} < s_j^{(i)} - 2] &\le \mathbb{P}[s_i < s_j],\\
    \mathbb{P}[s_i > s_j] &\le \mathbb{P}[s_i^{(j)} > s_j^{(i)} - 2].
\end{align}
Thus, together with \eqref{eq:alignment-expectation}, \eqref{eq:Pij'-1} and \eqref{eq:Pij'-2}, we get
\begin{align*}
    &\mathbb{E}[f(T)]\\
    &= 2\sum_{i<j}\mathbb{P}[s_i > s_j, b_{i,j} = 1] + \sum_{i<j} \mathbb{P}[s_i < s_j] - 2\sum_{i<j}\mathbb{P}[s_i < s_j, b_{i,j} = 1] - \sum_{i<j} \mathbb{P}[s_i \ge s_j]\\
    &= \sum_{i<j}\left(\left(1 + 2\gamma\right)\mathbb{P}[s_i^{(j)} > s_j^{(i)} - 2] - \mathbb{P}[s_i \ge s_j]\right) - \sum_{i<j}\left(\left(1+2\gamma\right)\mathbb{P}[s_i^{(j)} < s_j^{(i)} - 2] -\mathbb{P}[s_i < s_j] \right)\\
    &\ge \sum_{i<j}\left(\left(1 + 2\gamma\right)\mathbb{P}[s_i > s_j] - \mathbb{P}[s_i \ge s_j]\right) - \sum_{i<j}\left(\left(1+2\gamma\right)\mathbb{P}[s_i < s_j] -\mathbb{P}[s_i < s_j] \right)\\
    &= 2\gamma \sum_{i<j}\mathbb{P}[s_i > s_j] - 2\gamma\sum_{i<j}\mathbb{P}[s_i < s_j] - \sum_{i<j}\mathbb{P}[s_i = s_j].
\end{align*}

Then, we may apply Theorem \ref{thm:berry-esseen} to obtain
\begin{align*}
        \mathbb{P}\left[s_i > s_j\right] &\ge 1 - \Phi\left(- \frac{4(j-i)\gamma}{\sqrt{2(1-4\gamma^2)n}}\right) - C \cdot \frac{1}{\sqrt{n}}\\
        \mathbb{P}\left[s_i < s_j\right] &\le \Phi\left(- \frac{4(j-i)\gamma}{\sqrt{2(1-4\gamma^2)n}}\right) + C \cdot \frac{1}{\sqrt{n}}\\
        \mathbb{P}\left[s_i = s_j\right] &\le C \cdot \frac{1}{\sqrt{n}},
    \end{align*}
    for some constant $C > 0$ similarly as in \eqref{ineq:berry-esseen-bd} (with a different $C$ from that in \eqref{ineq:berry-esseen-bd}). We then further lower bound $\mathbb{E}[f(T)]$ by
    \begin{align*}
        \mathbb{E}[f(T)]
        &\ge 2\gamma \sum_{i<j}\mathbb{P}[s_i > s_j] - 2\gamma\sum_{i<j}\mathbb{P}[s_i < s_j] - \sum_{i<j}\mathbb{P}[s_i = s_j]\\
        &\ge 2\gamma\binom{n}{2} - 4\gamma \sum_{i<j} \Phi\left(- \frac{4(j-i)\gamma}{\sqrt{2(1-4\gamma^2)n}}\right) - O\left(n^{\frac{3}{2}}\right).
    \end{align*}

    Finally, we reuse the calculations in \eqref{ineq:sum-Phi-bd} and get{\allowdisplaybreaks
    \begin{align*}
        \mathbb{E}[f(T)]
        &\ge 2\gamma\binom{n}{2} - 4\gamma \sum_{i<j} \Phi\left(- \frac{4(j-i)\gamma}{\sqrt{2(1-4\gamma^2)n}}\right) - O\left(n^{\frac{3}{2}}\right)\\
        &\ge 2\gamma\binom{n}{2} - 4\gamma \binom{n}{2}\Phi\left(-\frac{2\gamma\sqrt{n}}{\sqrt{2(1-4\gamma^2)}}\right) - O\left(n^{\frac{3}{2}}\right)\\
        &= 2\gamma\binom{n}{2}\left(1 - 2\cdot \Phi\left(-\frac{2\gamma\sqrt{n}}{\sqrt{2(1-4\gamma^2)}}\right)\right) - O\left(n^{\frac{3}{2}}\right).
    \end{align*}
    Notice that in particular, if $\gamma = \omega(n^{-1/2})$, we have
    }
    \begin{align}
        \mathbb{E}[f(T)] \ge (1 - o(1))\cdot  2\gamma\binom{n}{2}. \label{ineq:alignment-expectation-lb}
    \end{align}

    Combining the fluctuation bound \eqref{ineq:alignment-tail-bd} and the expectation lower bound \eqref{ineq:alignment-expectation-lb}, we obtain that, for $\gamma = \omega(n^{-1/2})$, w.h.p.~the Ranking By Wins algorithm achieves an alignment objective of at least
    \begin{align*}
        \sum_{i<j} \hat{\pi}(i,j) T_{i,j} \ge f(T) \ge (1 - o(1)) \cdot 2\gamma \binom{n}{2}.
    \end{align*}
    On the other hand, by Proposition \ref{prop:opt-high-prob-bound}, for $\gamma = \omega(n^{-1/2})$, w.h.p.~the maximum alignment objective for a tournament $T$ drawn from $\mathcal{P}$ is at most
    \begin{align*}
        \text{OPT} \le 2\gamma\binom{n}{2} + O\left(n^{\frac{3}{2}}\right) = (1 + o(1)) \cdot 2\gamma\binom{n}{2}.
    \end{align*}
    Thus, we conclude that for $\gamma = \omega(n^{-1/2})$, w.h.p.~the Ranking By Wins algorithm achieves a $(1 - o(1))$ approximation of the maximum alignment on input $T$ drawn from $\mathcal{P}$.
\end{proof}

\addcontentsline{toc}{section}{References}
\bibliographystyle{alpha}
\bibliography{main}

\end{document}